\documentclass[11pt]{amsart}
\usepackage{amsmath,,amsthm, amssymb, amscd,amsxtra, esint}

\usepackage[dvips]{graphics,epsfig} 

\headheight=8pt 
\topmargin=0pt 
\textheight=624pt 
\textwidth=432pt 
\oddsidemargin=18pt 
\evensidemargin=18pt

\allowdisplaybreaks[2]

\sloppy

\hfuzz = 0.5cm 

%allows mathformula to wiggle a bit
%\include
\newtheorem{theorem}{Theorem} [section] 
\newtheorem{maintheorem}{Theorem} 
\newtheorem{lemma}[theorem]{Lemma} 
\newtheorem{proposition}[theorem]{Proposition} 
\newtheorem{remark}[theorem]{Remark}

%%%%%%%%%%%%%%%%%%%%% Number Structure; Theorem Styles %%%%
% %\theoremstyle{plain}
% \newtheorem{theorem}{Theorem}
% %\theoremstyle{definition}
% \newtheorem{definition}[theorem]{Definition}
% %\theoremstyle{remark}
% \newtheorem{remark}[theorem]{Remark}
% \newtheorem{ir}[theorem]{Internal Remark}
% %\theoremstyle{proposition}
% \newtheorem{proposition}[theorem]{Proposition}
% %\theoremstyle{lemma}
% \newtheorem{lemma}[theorem]{Lemma}
% %\theoremstyle{corollary}
% \newtheorem{corollary}[theorem]{Corollary}
% %\theoremstyle{assumption}
% \newtheorem{assumption}[theorem]{Assumption}
% % \include{psfig}
% \newtheorem{conjecture}[theorem]{Conjecture}
\numberwithin{equation}{section} \numberwithin{theorem}{section}

%Lower/Upper bound appears below /above the integral sign
\DeclareMathOperator*{\intt}{\int}      

\newcommand{\avint}{\fint}

%Roman I
\newcommand{\I}{\hspace{0.5mm}\text{I}\hspace{0.5mm}}

%Roman II
\newcommand{\II}{\text{I \hspace{-2.8mm} I} }

%Roman III

\newcommand{\noi}{
\noindent} 
\newcommand{\Z}{\mathbb{Z}} 
\newcommand{\R}{\mathbb{R}} 
 
\newcommand{\T}{\mathbb{T}} 
\newcommand{\N}{\mathcal{N}}

\newcommand{\al}{\alpha} 
\newcommand{\dl}{\delta} 
 
\newcommand{\eps}{\varepsilon} 
 
\newcommand{\g}{\gamma} 
\newcommand{\G}{\Gamma} 
\newcommand{\ld}{\lambda} 
 
\newcommand{\s}{\sigma} 
\newcommand{\ft}{\widehat} 
\newcommand{\wt}{\widetilde} 
\newcommand{\cj}{\overline} 
\newcommand{\dx}{
\partial_x} 
\newcommand{\dt}{
\partial_t}

%Japanese Bracket
\newcommand{\jb}[1] {\langle #1 \rangle}

\begin{document}

%\date{\today}
%\baselineskip = 20pt
\title [Almost Sure Well-Posedness of NLS below $L^2$] {\bf Almost sure well-posedness of the cubic nonlinear Schr\"odinger equation below $L^2 (\mathbb{T})$}

\author{James Colliander, Tadahiro Oh}

\address{James Colliander\\
Department of Mathematics\\
University of Toronto\\
40 St. George St, Toronto, ON M5S 2E4, Canada}
\thanks{J.C. was supported in part by NSERC grant RGP250233-07.}
\email{colliand@math.toronto.edu}

\address{Tadahiro Oh\\
Department of Mathematics\\
University of Toronto\\
40 St. George St, Toronto, ON M5S 2E4, Canada}

\curraddr{Department of Mathematics\\
Princeton University\\
Fine Hall, Washington Rd\\
Princeton, NJ 08544-1000, USA}

\email{hirooh@math.princeton.edu}

\subjclass[2010]{35Q55, 37K05, 37L50, 37L40}

\keywords{Schr\"odinger equation; NLS, well-posedness; invariant measures; ill-posedness}
\begin{abstract}
We consider the Cauchy problem for the one-dimensional periodic cubic nonlinear Schr\"odinger 
equation (NLS) with initial data below $L^2$. 
In particular, we exhibit nonlinear smoothing when the initial data are randomized. 
Then, we prove local well-posedness of NLS almost surely for the initial data 
in the support of the canonical Gaussian measures on $H^s(\mathbb{T})$ for each $s > -\frac{1}{3}$,
and global well-posedness for each $s > -\frac{1}{12}$.

\end{abstract}

%\date{2010-07-14}

\maketitle

\tableofcontents

\newpage

\section{Introduction}

We consider the Cauchy problem for the one-dimensional periodic cubic nonlinear Schr\"odinger equation (NLS): 
\begin{equation}
	\label{NLS1} 
	\begin{cases}
		i u_t - u_{xx} \pm u |u|^2 =0 \\
		u|_{t= 0} = u_0, ~x \in \T = \R / 2 \pi \Z.
	\end{cases}
\end{equation}

\noi
We first establish almost sure local well-posedness 
for{\footnote{We actually consider the Wick ordered version \eqref{NLS2} instead of \eqref{NLS1} below.}} \eqref{NLS1} 
with respect to the canonical Gaussian measure supported on $H^s (\T)$ in the range $-\frac{1}{3} < s < 0$. 
Then, we establish almost sure global well-posedness 
in $H^s(\T)$ for $-\frac{1}{12} < s < 0$. 
These results are motivated by (a) the well-posedness theory of nonlinear dispersive equations with low regularity initial conditions and (b) construction of measures on phase spaces which are invariant under the \eqref{NLS1} evolution.

\subsection{Low Regularity Well-Posedness Theory}

The well-posedness theory for the Cauchy problem \eqref{NLS1} for rough data has been the subject of recent studies. In particular, detailed studies of \eqref{NLS1} have revealed diverse phenomena of the associated data-to-solution map leading to ramified notions of ill-posedness and well-posedness. It is known that: 
\begin{itemize}
	\item The data-to-solution map $H^s\ni u_0 \longmapsto u(t) \in H^s$ (for some $t \neq 0$) is well-defined and analytic provided $s \geq 0$ \cite{Tsutsumi:1987p799} \cite{Bourgain:1993p453}. 

	\item Uniform continuity of the data-to-solution map from $H^s$ to $H^s$ fails for $s<0$ \cite{Kenig:2001p1478, Burq:2002p911, Christ:2003p838}. Moreover, when $s<0$, the data-to-solution map is discontinuous from $H^s (\T)$ even to the space of distributions $(C^\infty (\T))^*$ \cite{Christ:2003p1180, Molinet:2009p365}. 
	\item The data-to-solution map is unbounded from $H^s ( \R )$ to $H^s ( \R )$ provided $s < -\frac{1}{2}$. For example, the norm inflation phenomena identified in \cite{Christ:2003p838} shows there exist initial data arbitrarily small in $H^s(\R)$ which evolve into solutions which are arbitrarily large in $H^s(\R)$ in an arbitrarily short time. 
	\item The data-to-solution map is bounded{\footnote{M. Christ (with J. Holmer and D. Tataru) announced similar results on $\T$ in April 2009 at IHP in Paris.}} from $H^s (\R)$ to $H^s(\R)$ provided $-\frac{1}{6} \leq s <0$ \cite{Koch:2007p782}. Moreover, there exist weak solutions associated to every $u_0 \in H^s (\R)$ in this range. These weak solutions are not known to be unique. 
\end{itemize}
It is unknown whether well-posedness with merely continuous dependence upon the initial data for \eqref{NLS1} holds true in $H^s$ for $s \geq -\frac{1}{2}$. In contrast to these negative results, this paper establishes positive results on subsets of $H^s (\T)$ for certain $s<0$ which are full with respect to natural Gaussian measures.

\subsection{Invariant Gibbs Measures}

Inspired by \cite{Lebowitz:1988p737} and following an approach from \cite{Zhidkov:1994p834}, Bourgain \cite{Bourgain:1994p435}
constructed  the Gibbs measure for{\footnote{In fact, the construction and invariance of the Gibbs measure is proved for a family of (sub-)quintic NLS equations containing \eqref{NLS1} in \cite{Bourgain:1994p435}.}} \eqref{NLS1} and established its invariance under the \eqref{NLS1} flow. Sufficiently regular solutions of \eqref{NLS1} satisfy mass conservation 
\begin{equation}
	\label{mass} \| u(t ) \|_{L^2(\T)} = \| u_0 \|_{L^2 (\T)}, 
\end{equation}

\noi and Hamiltonian conservation 
\begin{equation}
	\label{Hamiltonian} H[u(t) ] = \int_\T \frac{1}{2} |u_x (t) |^2 \pm \frac{1}{4} |u(t)|^4 dx = H[u_0]. 
\end{equation}

\noi By the Hamiltonian structure of the equation, the Gibbs measure 
\begin{equation}
	\label{GibbsMeasure} \text{``}d \mu = e^{-H[u]} \prod_{x \in \T} du(x)\text{''} 
\end{equation}
is formally invariant. The Gibbs measure is rewritten as a weighted Wiener measure 
\begin{equation}
	\label{WeightedWiener} d \mu = Z^{-1} e^{\mp \frac{1}{4} \int|u|^4 dx} d\rho 
\end{equation}
where 
\begin{equation}
	\label{Wiener} d \rho = Z_0^{-1} e^{-\frac{1}{2} \int |u_x|^2 dx} \prod_{x \in \T} du(x) 
\end{equation}
is the Wiener measure on $\T$.

The construction of the Gibbs measure proceeds by showing that the density $ e^{\mp \frac{1}{4} \int|u|^4 dx}$ is in $ L^1 (d \rho)$. Expressed in terms of Fourier coefficients, the Wiener measure describes a Gaussian distribution for each $|n| \widehat{u} (n)$. Thus, a typical element in the support of the Wiener measure may be represented{\footnote{There is an issue regarding the zero Fourier mode which the reader is invited to ignore. The Wiener measure will soon be adjusted using the conserved $L^2$ norm into another formally invariant Gaussian measure which avoids the $n=0$ issue.}} 
\begin{equation}
	\label{representation} u = u^\omega = \sum_{n \in \Z} \frac{g_n (\omega)}{|n|} e^{i n x} 
\end{equation}
where the $\{ g_n \}_{n \in \mathbb{Z}}$ are independent standard complex valued Gaussian random variables
on some probability space $(\Omega, \mathcal{F}, \mathbb{P})$.
Almost surely in $\omega$, the series \eqref{representation} defines a function $u^\omega \in H^{\frac{1}{2}-} (\T)$. Thus, $\int |u|^4 dx $ is well-defined and the density $e^{\mp \frac{1}{4} \int|u|^4 dx} $ may be shown{\footnote{In the defocusing case, this step is clear. The focusing case requires a more delicate analysis exploiting an (invariant) $L^2(\T)$ size cutoff (See \cite{Lebowitz:1988p737} and \cite{Bourgain:1994p435}).}} to be in $L^1 ( \omega).$

The invariance of the Gibbs measure is established by studying a sequence of finite dimensional approximations obtained by Dirichlet-projecting the dynamics of \eqref{NLS1} onto finitely many modes using the fact that the \eqref{NLS1} evolution is well-defined on the support of the Wiener measure. Recall that the evolution for \eqref{NLS1} is well-defined for all $u_0 \in L^2 (\T)$ so it is certainly well-defined on the support of the Gibbs measure living in $H^{\frac{1}{2}-} (\T)$.

The questions of existence and invariance of the Gibbs measure associated to \eqref{NLS1} (in fact, associated to the Wick ordered version \eqref{NLS2}) posed on the two-dimensional torus $\T^2$ were investigated in \cite{Bourgain:1996p446}. In the 
two-dimensional case, the representation \eqref{representation} almost surely in $\omega$ defines a distribution in $H^{0-} (\T^2)$ but not in $L^2 (\T^2)$. More precisely, $u$ defined in \eqref{representation} is almost surely in $B^0_{2, \infty}(\T^2) \setminus L^2(\T^2)$. Since the data-to-solution map is not well-defined on even $L^2 (\T^2)$, 
the issue of well-defined dynamics on the support of the Gibbs measure is not at all obvious. 
Nonetheless, Bourgain \cite{Bourgain:1996p446} established  a well-defined local-in-time dynamics on the support of the Wiener measure. In the defocusing case, he proved global well-posedness almost surely on the support, exploiting the invariance of the (finite dimensional) Gibbs measure.

\subsection{Almost Sure Local Well-Posedness} Consider the canonical Gaussian measure on $H^\alpha (\T)$: 
\begin{equation} \label{Gaussian0}
d{\wt{\rho}}_\alpha = {\wt{Z}}_\alpha^{-1} e^{-\frac{1}{2} \int |D^\alpha u |^2 dx } \prod_{x \in \T} du(x), 
\end{equation}

\noi
where $D = \sqrt{- \partial_x^2}$. The Gaussian measure $d \rho_\alpha$ corresponds to a collection of Gaussian distributions of $\{ |n|^\alpha \widehat{u} (n)\}_{n \in \Z}$, so a typical element in the support may be represented{\footnote{The issue with the zero mode should be ignored; see \eqref{Gaussian1} below.}} as a random Fourier series 
\begin{equation}
	\label{representationalpha} u = u^\omega = \sum_{n \in \Z} \frac{g_n (\omega)}{|n|^\alpha} e^{inx}. 
\end{equation}
This series almost surely in $\omega$ defines a function in $H^{\alpha - \frac{1}{2} -} (\T)$ but not in $H^{\alpha - \frac{1}{2} } (\T)$. Note that $u_0^\omega$ in \eqref{representationalpha} can also be expressed as $u_0^\omega = \sum \wt{g}_n e_n$ where $e_n$ is another orthonormal basis in $H^\al(\mathbb{T})$ and $\{\wt{g}_n\}$ is another family of independent standard complex-valued Gaussian random variables. In this respect, the Gaussian measure $\wt{\rho}_\alpha$ is canonical. See \cite{Kuo:1975p724} for discussions on the Gaussian measures on Banach spaces. Also, see \cite{Zhidkov:2001p831}.

Since $\| u(t) \|_{L^2} = \|u_0 \|_{L^2}$ under the flow of \eqref{NLS1}, we formally expect the Gaussian measure on $L^2(\T)$ 
\begin{equation}
	\label{eq:white} d\rho_0 = Z_0^{-1} e^{-\frac{1}{2} \int | u |^2 dx } \prod_{x \in \T} du(x) 
\end{equation}

\noi to be invariant in view of the Hamiltonian structure of \eqref{NLS1}. This measure $\rho_0$ is the white noise on the distributions on $\T$ and is supported on $H^{-\frac{1}{2}-} (\T) \setminus H^{-\frac{1}{2}} (\T)$, i.e. in the scaling critical/supercritical regime for \eqref{NLS1}. It was shown in \cite{Oh:2010p1338} that the white noise $\rho_0$ is a weak limit of the invariant measures under the flow of \eqref{NLS1}. However, this result does not establish the invariance of the white noise $\rho_0$ since the flow is not well-defined on its support. (See Remark \ref{REM:white}.) Invariance of white noise has recently been established for the KdV equation on $\T$ \cite{Quastel:2008p796, Oh:2009p792, Oh:2010p1338}. See \cite{Oh:2009p1296} for a summary of these results.

If we define $v(t) = e^{i \gamma t} u(t)$, with $\gamma \in \R$, where $u$ solves \eqref{NLS1}, then $v$ satisfies $i 
\partial_t v - v_{xx} \pm |v|^2v + \gamma v = 0$. Recall that $\avint |u|^2 dx := \frac{1}{2\pi} \int |u|^2 dx$ is conserved under the flow of \eqref{NLS1} for $u_0 \in L^2 (\T)$. Hence, by letting $\gamma = \mp 2 \fint |u|^2 dx$, \eqref{NLS1} is equivalent to 
\begin{equation}
	\label{NLS2} 
	\begin{cases}
		i u_t - u_{xx} \pm (u |u|^2 -2 u \fint |u|^2 dx) = 0 \\
		u|_{t= 0} = u_0, 
	\end{cases}
\end{equation}

\noi at least for $u_0 \in L^2(\T)$. However, for $u_0 \notin L^2(\T)$, we can't freely convert solutions of \eqref{NLS2} into solutions of \eqref{NLS1}. Bourgain \cite{Bourgain:1996p446} refers to \eqref{NLS2} as 
the {\it Wick ordered cubic NLS} since it may also be obtained from the Wick ordered Hamiltonian. 

In the following, we choose to study \eqref{NLS2} instead of \eqref{NLS1} for $u_0 \notin L^2 (\T)$. (See Remark \ref{REM:renorm}.) In particular, we consider $u_0 $ of the form (slightly adjusted compared with \eqref{representationalpha}) 
\begin{equation}
	\label{IV} u_0 = u_0^\omega = \sum_{n \in \Z} \frac{g_n (\omega)}{\sqrt{1+ |n|^{2\alpha}}} e^{inx} 
\end{equation}

\noi which can be regarded as a  typical element in the support of the Gaussian measure 
\begin{equation}
	\label{Gaussian1} d \rho_\al = Z_\alpha^{-1} \exp \Big(-\frac{1}{2}\int |u|^2 dx -\frac{1}{2} \int |D^{\al} u|^2 dx\Big) \prod_{x \in \mathbb{T}} d u(x). 
\end{equation}

\noi By shifting the Laplacian as in \cite{Bourgain:1994p435, Bourgain:1996p446}, i.e. replacing $-u_{xx}$ by $-u_{xx} +u$ in \eqref{NLS1} or \eqref{NLS2}, we can also regard $u_0$ of the form \eqref{IV} as the functions in the support of the Gaussian measure $ \wt{\rho}_\alpha$ defined in \eqref{Gaussian0}. 
(Strictly speaking, one needs to replace the denominator in \eqref{IV} by $(1+|n|^2)^\frac{\al}{2}$
in this case.)
Note that $u_0^\omega$ in \eqref{IV} is in $\bigcap_{s < \al - \frac{1}{2}} H^s \setminus H^{ \al - \frac{1}{2}}$.
In view of Bourgain's global well-posedness (GWP) result in $L^2(\mathbb{T})$ in \cite{Bourgain:1993p453}, 
we assume that $\al \leq \frac{1}{2}$ in the following 
so that $u_0^\omega$ lies strictly in the negative Sobolev spaces, almost surely in  $\omega$.

In establishing local well-posedness, we follow the argument by Bourgain \cite{Bourgain:1996p446}. 
First, write \eqref{NLS2} as an integral equation as in \eqref{NLS3}.
\begin{equation} 
	\label{NLS3} u(t) = \G u(t) := S(t) u_0 \pm i \int_0^t S(t - t') \mathcal{N}(u) (t') d t' 
\end{equation}

\noi where $S(t) = e^{-i \dx^2 t}$, $u_0$ is as in \eqref{IV}, and \[\mathcal{N}(u) := u |u|^2 - 2u \fint |u|^2.\] 

\noi
Note that $S(t) u_0$ has the same regularity as $u_0$ for each fixed $t \in \mathbb{R}$. i.e. $S(t) u^\omega_0 \in H^{\al-\frac{1}{2}-} (\mathbb{T})\setminus H^{\al-\frac{1}{2}}(\mathbb{T})$ a.s. Hence, $S(t)u_0$ is strictly in the negative Sobolev space for $\al \leq \frac{1}{2}$ a.s.

However, it turns out that 
the nonlinear part $\int_0^t S(t - t') \mathcal{N}(u) (t') d t'$ lies 
almost surely in a smoother space $L^2 (\mathbb{T})$ even for $\al \leq \frac{1}{2}$. 
(Also, see \cite{Bourgain:1996p446}, \cite{Burq:2008p624}.) 
We indeed show that for each small $\dl> 0$ there exists $\Omega_\dl$ 
with complemental measure $< e^{-\frac{1}{\dl^c}}$ such that $\G$ defined in \eqref{NLS3} 
is a contraction on $S(t) u_0^\omega + B$ for $\omega \in \Omega_\dl$ on the time interval $[0, \dl]$, 
where $B$ denotes the ball of radius 1 in the Bourgain space $X^{s, \frac{1}{2}+, \dl}$ for some $s \geq 0$.
(See \eqref{Xsb} and \eqref{Xsb2} for the definition of $X^{s, \frac{1}{2}+, \dl}$.)

 The following theorem states almost sure local well-posedness for each $\al \in (\frac{1}{6}, \frac{1}{2}].$
\begin{maintheorem}
	\label{THM:LWP} Let $\al \in ( \max( \frac{s}{3} + \frac{1}{6}, s), \frac{1}{2}]$ with $ s \in [0, \frac{1}{2}]$. Then, the periodic (Wick ordered) cubic NLS \eqref{NLS2} is locally well-posed almost surely in $H^{\al - \frac{1}{2}-}(\mathbb{T})$. More precisely, there exist $c > 0$ such that for each $\dl \ll 1$, there exists a set $\Omega_\dl \in \mathcal{F}$ with the following properties:
	\begin{enumerate}
		\item[(i)] 
		The complemental measure of $\Omega_\dl$ is small. More precisely, we have
		\[\mathbb{P}(\Omega_\dl^c) = \rho_\al \circ u_0(\Omega_\dl^c) < e^{-\frac{1}{\dl^c}},\]

		\noi 
		where $\rho_\al$ is the Gaussian probability measure on $H^{\al-\frac{1}{2}-}(\T)$ defined in \eqref{Gaussian1}
		and $u_0$ is viewed as a map $u_0:\Omega \to H^{\al-\frac{1}{2}-}(\mathbb{T})$.
		
		\item[(ii)] For each $\omega \in \Omega_\dl$, there exists a (unique) solution $u$ of \eqref{NLS2} in
		\[e^{-i \dx^2 t}u_0 + C([-\dl, \dl];H^{s}(\mathbb{T})) \subset C([-\dl, \dl];H^{\al - \frac{1}{2}-}(\mathbb{T}))\]
		with the initial condition $u_0^\omega$ given by \eqref{IV}. 
		Here, the uniqueness holds only in the ball centered at $e^{-i \dx^2 t}u_0$ of radius 1 in $X^{s, \frac{1}{2}+,\dl}$.
	\end{enumerate}
	
	\noi In particular, we have almost sure local well-posedness with respect to the Gaussian measure \eqref{Gaussian1} supported in $H^{\s}(\mathbb{T})$ for each $\s > -\frac{1}{3}$. 
\end{maintheorem}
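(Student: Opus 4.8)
The plan is to follow Bourgain's scheme from \cite{Bourgain:1996p446}: seek the solution in the form $u = S(t) u_0^\omega + v$, where the rough linear piece $S(t) u_0^\omega \in H^{\al - \frac12 -}\setminus H^{\al-\frac12}$ carries all the negative regularity, while the Duhamel remainder $v$ is sought in the ball $B$ of radius $1$ in the Bourgain space $Z^{s,\frac12,\dl}$ with $s \in [0,\frac12]$. Substituting into \eqref{NLS3}, $v$ must be a fixed point of
\[
\Phi_\omega(v) := \pm\, i \int_0^t S(t-t')\,\mathcal{N}\big(S(t')u_0^\omega + v\big)(t')\, dt'.
\]
Since $\mathcal{N}$ is (Wick ordered) trilinear, I would expand $\mathcal{N}(S(t')u_0^\omega + v)$ into the eight terms obtained by placing either $z := S(t')u_0^\omega$ or $v$ in each of the three slots, and organize them by the number of random factors: $\mathcal{N}^{(zzz)}$, $\mathcal{N}^{(zzv)}$, $\mathcal{N}^{(zvv)}$, $\mathcal{N}^{(vvv)}$ (with conjugation in the middle slot). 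The aim is to produce, for each small $\dl$, a set $\Omega_\dl$ with $\mathbb{P}(\Omega_\dl^c) < e^{-\dl^{-c}}$ on which $\Phi_\omega$ maps $B$ into $B$ and is a contraction on $[-\dl,\dl]$; uniqueness and the continuity statement $v \in C([-\dl,\dl];H^s)$ then follow from the structure of $Z^{s,\frac12,\dl}$.

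For the purely deterministic piece $\mathcal{N}^{(vvv)}$ (and the part of $\mathcal{N}^{(zvv)}$ needing no randomness) I would invoke the standard periodic $X^{s,b}$ multilinear estimates for cubic NLS built on Bourgain's $L^4_{t,x}(\T)$ Strichartz inequality and its bilinear refinement. The point where Wick ordering enters is the identity $n_1^2 - n_2^2 + n_3^2 - n^2 = -2(n_1-n_2)(n_3-n_2)$ on the hyperplane $n = n_1 - n_2 + n_3$: the Wick renormalization removes exactly the resonant frequencies $n_1 = n_2$ or $n_2 = n_3$, so that on the remaining region one has a genuine modulation gain $|n_1^2 - n_2^2 + n_3^2 - n^2| \ge 2$, and this is precisely what pushes the Duhamel term into the smoother space $L^2 \subset H^s$.

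For every term containing at least one factor $z = S(t')u_0^\omega$ I would gain regularity from the Gaussian randomness. The mechanism is Gaussian hypercontractivity (Wiener chaos): a homogeneous chaos of order $k$ has all its $L^p_\omega$ norms controlled by its $L^2_\omega$ norm up to $(p-1)^{k/2}$, which after Chebyshev gives tails $\mathbb{P}(\|\cdot\| > \ld) \lesssim e^{-c\ld^{2/k}}$ --- exactly the sub-exponential decay needed to assemble $\Omega_\dl$ with $\mathbb{P}(\Omega_\dl^c) < e^{-\dl^{-c}}$. Concretely, for each mixed term I would bound a high even moment of its $Z^{s,\frac12,\dl}$ norm by passing to Fourier--space-time coefficients, pairing the Gaussians via $\mathbb{E}[g_{n_1}\overline{g_{n_2}}\cdots]$ (using Wick ordering once more to discard the diagonal pairings), and reducing to a weighted lattice-point count with weights $\prod_i \jb{n_i}^{-2\al}$ and modulation weight $\jb{2(n_1-n_2)(n_3-n_2)}^{-1+}$; the restriction to $[-\dl,\dl]$ supplies the small power of $\dl$ that closes the fixed point.

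The main obstacle will be the mixed trilinear estimates, and among them the term $\mathcal{N}^{(zzv)}$ with two random factors and one copy of $v$: randomness is available in only two of the three slots, $v$ sits merely in $X^{s,\frac12}$, and after pairing the two Gaussians one is left with a near-resonant sum of the shape $\sum_m \jb{m}^{-2\al}|g_m|^2(\cdots)$ whose exactly resonant part is killed by Wick ordering but whose off-diagonal part must be summed directly. Balancing the three $\jb{n_i}^{-2\al}$ factors, the modulation gain $|n_1-n_2|^{-1+}|n_3-n_2|^{-1+}$, and the output weight $\jb{n}^{2s}$ in this count is what forces $\al > \frac{s}{3} + \frac16$ (the other condition $\al > s$ being the mild requirement that the Duhamel term actually land in $H^s$). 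Finally, taking $s = 0$, so the solution lives in $e^{-i\dx^2 t}u_0 + C([-\dl,\dl];L^2)$, and letting $\al \downarrow \frac16$ drives the data regularity $\al - \frac12-$ down to $-\frac13-$, giving almost sure local well-posedness with respect to \eqref{Gaussian1} for every $\s > -\frac13$.
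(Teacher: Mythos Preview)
Your overall strategy is correct and matches the paper's approach closely: the decomposition $u = S(t)u_0^\omega + v$ with $v$ sought in a ball in $Z^{s,\frac12,\dl}$, the trilinear expansion into type-(I)/type-(II) combinations, the use of Wick ordering to kill the resonances $n_2 \in \{n_1,n_3\}$, and the appeal to Gaussian hypercontractivity for the random terms are exactly what the paper does in Section~\ref{SEC:LWP}.

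However, you misidentify the worst case. It is \emph{not} $\mathcal{N}^{(zzv)}$ but rather $\mathcal{N}^{(zzz)}$, the purely random term with three factors of $S(t)u_0^\omega$; this is where the constraint $\al > \tfrac{s}{3}+\tfrac{1}{6}$ actually arises (see Cases (k), (l) in Subsection~\ref{SUBSEC:LWP4} and Case (b) in Subsection~\ref{SUBSEC:LWP2}, and the paper's closing Remark in Section~\ref{SEC:GWP}). The mixed terms with one or two deterministic factors only require $\al > s$ or $\al > 0$. Your heuristic that ``randomness is available in only two of the three slots'' is misleading here: having \emph{more} random factors means more $\jb{n_j}^{-\al}$ weights but no $X^{s,\frac12}$ smoothness to exploit, and the divisor-counting bound on the modulation level set $2(n_2-n_1)(n_2-n_3)=\mu$ leaves a residual $(N^3)^{1/2}$ that must be absorbed entirely by the Gaussian decay.

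On the technical side, the paper does not run hypercontractivity uniformly across all random terms as you suggest. Instead it first splits $\mathcal{N}_1$ into a \emph{high modulation} regime (handled by duality, the $L^4$ Strichartz estimate \eqref{L4}, and the probabilistic $L^p$ bound of Lemma~\ref{LEM:prob2}) and a \emph{low modulation} regime (handled by the divisor bound of Lemma~\ref{LEM:count1} together with the crude pointwise bound $|g_n(\omega)| \lesssim \dl^{-\beta/2}\jb{n}^\eps$ of Lemma~\ref{LEM:prob1}). Genuine Wiener-chaos hypercontractivity is invoked only for the all-type-(I) case (k)/(l) with $n_1\ne n_3$, via \eqref{chaosestimate}. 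Your plan of bounding high even moments throughout would likely succeed but is heavier than necessary; the paper's hybrid is more economical. Also, a small slip: you write ``$L^2 \subset H^s$'' but for $s\in[0,\tfrac12]$ the inclusion goes the other way; the point is that the Duhamel term lands in $H^s \subset L^2$, which is strictly smoother than the data space $H^{\al-\frac12-}$.
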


\noi
We prove Theorem \ref{THM:LWP} in Section \ref{SEC:LWP}
by a combination of deterministic multilinear estimates (e.g. Lemma \ref{LEM:deterministic})
and probabilistic estimates on the linear solution with random initial data (Lemmata \ref{LEM:prob1}, \ref{LEM:prob2}, and \ref{LEM:hyper}.)

\subsection{Almost Sure Global Well-Posedness} \label{SUBSEC:1GWP}
We continue our study  on the periodic cubic NLS \eqref{NLS1} with random initial data in the negative Sobolev spaces. 
In the second part of this paper, we study global well-posedness of  \eqref{NLS1} 
with initial data of the form \eqref{IV}.
In particular, we establish almost sure global well-posedness of  \eqref{NLS1} 
with respect to the Gaussian measure $\rho_\al$ in \eqref{Gaussian1}
for certain values of $\al \leq \frac{1}{2}$.

So far, there is basically only one method known for proving  almost sure global well-posedness of PDEs 
with random initial data of type \eqref{IV}. 
In \cite{Bourgain:1994p435}, Bourgain proved the invariance of the Gibbs measures for NLS. In dealing with the super-cubic nonlinearity, (where only the local well-posedness result was available), he used a probabilistic argument and the approximating finite dimensional ODEs (with the invariant finite dimensional Gibbs measures) to extend the local solutions to global ones almost surely on the statistical ensemble and showed the invariance of the Gibbs measures. We point out that this method can be applied in a general setting, provided that  local well-posedness is obtained with a ``good" estimate on the solutions (e.g. via the fixed point argument) and that we have a formally invariant measure such as the Gibbs measure or the white noise (where the leading term corresponds to \eqref{Gaussian1} for $\al = 1$ and $\al = 0$.) See Bourgain \cite{Bourgain:1994p540, Bourgain:1996p446}, Burq-Tzvetkov \cite{Burq:2007p1542,Burq:2008p623}, Oh \cite{Oh:2009p791, OhSBO, Oh:2009p1296}, and Tzvetkov \cite{Tzvetkov:2006p801,Tzvetkov:2008p736}.

From Theorem \ref{THM:LWP}, we have local solutions in the support of the Gaussian measure $\rho_\al$ in \eqref{Gaussian1} for $\al \in (\frac{1}{6}, \frac{1}{2}]$, which we would like to extend globally in time. 
Since the values of $\al$ is strictly between 0 and 1, the initial condition $u_0$ in \eqref{IV} is not in the support of an invariant measure for \eqref{NLS2} i.e. 
$\rho_\al$ in \eqref{Gaussian1} does not correspond to (the quadratic part of) the Gibbs measure or the white noise.
 Therefore, Bourgain's probabilistic argument \cite{Bourgain:1994p435} is {\it not} applicable here.

The crucial point in the local theory is the fact 
that the nonlinear part is almost surely smoother than the initial data.
This observation led us to consider Bourgain's high-low method \cite{Bourgain:1998p434}
for establishing global well-posedness, 
 since this kind of {\it nonlinear smoothing} is the crucial ingredient for the method. Moreover, as you see below, the implementation of the high-low method naturally lets us apply our probabilistic local theory iteratively since the data for the difference equations with high frequency initial data 
 have random Fourier coefficients (with the same distribution) at each step.

\medskip
In the following, we briefly sketch the iteration scheme for global well-posedness.
Let $s = \al - \frac{1}{2}-$ with $\al \leq \frac{1}{2}$. i.e. $s<0$.\footnote{In the global theory, 
we use $s = \al - \frac{1}{2}- < 0$ to denote
the regularity of the initial data below $L^2$.}

 By the large deviation estimate, we have 
\begin{equation}
	\label{largedevi} \mathbb{P} ( \| u_0(\omega)\|_{H^s} \geq K ) \leq e^{-c K^2}. 
\end{equation}

\noi In the following, we restrict ourselves on $\Omega_K = \{ \omega \in \Omega: \| u_0(\omega)\|_{H^s} \leq K \}.$ By writing $u_0 = \phi_0 + \psi_0$, where $\phi_0 := \mathbb{P}_{\leq N} u_0 =\sum_{|n|\leq N} \ft{u}_0(n) e^{inx}$,
the low-frequency part $\phi_0$ is in $L^2(\mathbb{T})$, and it satisfies 
\[\| \phi_0\|_{L^2} \leq N^{-s} \|\phi_0\|_{H^s} \leq N^{-s} K.\] 

Let $u^{1}$ denote the solution of \eqref{NLS2} with the initial data $\phi_0$ on some time interval $[0, \dl]$, where $\dl$ is the time of local existence, i.e. $\dl = \dl(N^{-s}K) \lesssim \dl (\|\phi_0\|_{L^2})$. Then, we have 
\begin{equation}
	\label{LNLS1} 
	\begin{cases}
		i \dt u^{1} - \dx^2 u^{1} \pm \mathcal{N}(u^{1}) = 0 \\
		u^{1}|_{t= 0} = \phi_0. 
	\end{cases}
\end{equation}

\noi From the $L^2$ well-posedness theory of Bourgain \cite{ Bourgain:1993p453}, \eqref{LNLS1} is globally well-posed with the $L^2$-conservation: $\|u^{1}(t)\|_{L^2} = \|\phi_0\|_{L^2} \lesssim N^{-s}K $ for any $t \in \R$. Moreover, from the local theory, we have 
\begin{equation}
	\label{u1bound} \|u^1\|_{X^{0, \frac{1}{2}+}[0, \dl]} \lesssim \|\phi_0\|_{L^2} \leq N^{-s}K. 
\end{equation}

Now, let $v^{1}$ be a solution of the following difference equation on $[0, \dl]$: 
\begin{equation}
	\label{HNLS1} 
	\begin{cases}
		i \dt v^{1} - \dx^2 v^{1} \pm (\mathcal{N} (u^{1} + v^{1}) - \mathcal{N}(u^{1})) = 0 \\
		v^{1}|_{t= 0} = \psi_0 = \sum_{|n|> N} \frac{g_n(\omega)}{\sqrt{1+|n|^{2\al}}} e^{inx}. 
	\end{cases}
\end{equation}

\noi i.e. we have $u(t) = u^{1}(t) + v^{1}(t)$ as long as the solution $v^{1}$ of \eqref{HNLS1} exists.
Note that $\psi_0$ has Gaussian-randomized Fourier coefficients. Hence, we can use our probabilistic local theory 
(as in Theorem \ref{THM:LWP})
to study \eqref{HNLS1}.

Suppose that, by our probabilistic local theory, we can show that \eqref{HNLS1} is locally well-posed on the time interval $[0, \delta]$ except on a set of measure $e^{-\frac{1}{\dl^c}}$. We have $v^1(t) = S(t) \psi_0 + w^1(t)$, where the nonlinear part $w^1(t)$ is smoother and is in $L^2(\T)$ for all $t \in [0, \dl]$. The appearance of the external function $u^1$ in \eqref{HNLS1} with large $X^{0, \frac{1}{2}+}_{[0,  \dl]}$-norm, forces us to refine our argument used to prove Theorem \ref{THM:LWP} to obtain a good estimate on $\|w^1(t)\|_{L^2}$.

At time $t = \dl$, we redistribute the data. i.e. write $u (\dl) = \phi_1 + \psi_1$, where $\phi_1 := u^1(\dl) + w^1(\dl)$ and $\psi_1 := S(\dl) \psi_0$. Let $u^2$ denote the solution of \eqref{NLS2} with the initial data $\phi_1$ starting at time $t = \dl$. i.e. 
\begin{equation}
	\label{LNLS2} 
	\begin{cases}
		i \dt u^2 - \dx^2 u^2 \pm \mathcal{N}(u^2) = 0 \\
		u^2|_{t= \dl} = \phi_1 = u^1(\dl) + w^1(\dl) \in L^2(\T). 
	\end{cases}
\end{equation}

\noi Then, \eqref{LNLS2} is globally well-posed. Also, from the local theory, we have 
\begin{equation}
	\label{u2bound} \| u^2 \|_{X^{0, \frac{1}{2}+}[\dl, 2\dl]} \lesssim \|\phi_1\|_{L^2} \leq \| u^1(\dl)\|_{L^2} + \|w^1(\dl)\|_{L^2} \lesssim N^{-s}K + \|w^1(\dl)\|_{L^2} \lesssim N^{-s} K 
\end{equation}

\noi {\it as long as} 
\begin{equation}
	\label{w1bound} \|w^1(\dl)\|_{L^2} \lesssim N^{-s}K. 
\end{equation}

Now, let $v^2$ be the solution of the difference equation on $[\dl, 2\dl]$: 
\begin{equation}
	\label{HNLS2} 
	\begin{cases}
		i \dt v^2 - \dx^2 v^2 \pm (\mathcal{N} (u^2 + v^2) - \mathcal{N}(u^2)) = 0 \\
		v^2|_{t= \dl} = \psi_1 = \sum_{|n|> N} \frac{g_n(\omega)e^{i \dl n^2}}{\sqrt{1+|n|^{2\al}}} e^{inx}. 
	\end{cases}
\end{equation}

\noi Once again, $\psi_1$ has Gaussian-randomized Fourier coefficients.
Since the complex Gaussian is invariant under rotation, 
we see that $\psi_1$ has the same distribution as $\psi_0$.\footnote{This can be viewed as invariance
of the Gaussian measure $\rho_\al$ (restricted to the high frequencies) under the linear flow.
This is the key ingredient for the global-in-time argument (in the absence of (formally) invariant measures under the nonlinear PDE flow.)} 
Hence, we can use our probabilistic local theory to study \eqref{HNLS2}.

In this way, we iterate the deterministic local theory to the ``low-frequency'' part $u^j$ and the probabilistic local theory to the ``high-frequency'' part $v^j$ to prove that \eqref{NLS2} is well-posed on $[0, T]$ for arbitrary $T>0$. 
For details, see Section \ref{SEC:GWP}.
\begin{maintheorem}
	\label{THM:GWP1} Let $\al \in ( \frac{5}{12}, \frac{1}{2}]$. Then, the periodic (Wick ordered) cubic NLS \eqref{NLS2} is globally well-posed almost surely in $H^{\al - \frac{1}{2}-}(\mathbb{T})$. More precisely, for almost every $\omega \in \Omega$ there exists a (unique) solution $u$ of \eqref{NLS2} in
	\[e^{-i \dx^2 t}u_0 + C(\R;L^2(\mathbb{T})) \subset C(\R;H^{\al - \frac{1}{2}-}(\mathbb{T}))\]
	with the initial condition $u_0^\omega$ given by \eqref{IV}.
	Here, the uniqueness holds in a very mild sense. See Remark \ref{REM:unique}.

	In particular, we have almost sure global well-posedness with respect to the Gaussian measure \eqref{Gaussian1} supported in $H^{s}(\mathbb{T})$ for each $s > -\frac{1}{12}$. 
\end{maintheorem}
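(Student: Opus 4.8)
The plan is to carry out the high-low iteration sketched above, following Bourgain's high-low method \cite{Bourgain:1998p434} but with the ``high'' part handled by the probabilistic local theory of Theorem \ref{THM:LWP} rather than by an invariant measure. Fix $T > 0$. By the large deviation bound \eqref{largedevi} it suffices to construct the solution on $[0,T]$ for $\omega \in \Omega_K = \{\|u_0(\omega)\|_{H^s} \le K\}$ outside a set whose measure we can send to $0$, and then to take a countable union over $K, T \in \mathbb{N}$ together with Borel--Cantelli. The whole argument reduces to a \emph{perturbed probabilistic local estimate}: on a step $[(j-1)\dl, j\dl]$, given an external low-frequency solution $u^j$ of \eqref{NLS2} with $\|u^j\|_{Z^{0,\frac{1}{2}}[(j-1)\dl,j\dl]} \lesssim N^{-s}K$ and high-frequency data $\psi_{j-1} = \sum_{|n|>N} g_n(\omega) e^{i(j-1)\dl n^2}(1+|n|^{2\al})^{-1/2}e^{inx}$, the difference equation of type \eqref{HNLS1}, \eqref{HNLS2} has a unique solution $v^j = S(t)\psi_{j-1} + w^j$ satisfying, outside an $\omega$-set of measure $< e^{-1/\dl^c}$,
\begin{equation*}
  \|w^j\|_{Z^{0,\frac{1}{2},\dl}} \lesssim N^{-\sigma}\,\dl^{\theta}\, P(N^{-s}K)
\end{equation*}
for some $\sigma = \sigma(\al) > 0$, some $\theta > 0$, and a fixed polynomial $P$; in particular $\|w^j(j\dl)\|_{L^2}$ is this small. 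The gains of a negative power of $N$ (the \emph{nonlinear smoothing}) and of a positive power of $\dl$ are what distinguish this from the bare statement of Theorem \ref{THM:LWP}.

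To prove this estimate I would set up the contraction for $w^j$ on the ball of radius $N^{-\sigma}\dl^\theta P(N^{-s}K)$ in $Z^{0,\frac{1}{2},\dl}$, expanding $\mathcal{N}(u^j + S(t)\psi_{j-1} + w^j) - \mathcal{N}(u^j)$ into trilinear terms, each carrying at least one factor of $S(t)\psi_{j-1}$ or of $w^j$. The terms containing a factor $S(t)\psi_{j-1}$ are estimated by the same probabilistic $L^p_{t,x}$ and $X^{s,b}$ bounds on the randomized linear evolution used in the proof of Theorem \ref{THM:LWP}; rotation invariance of complex Gaussians means $\{g_n(\omega) e^{i(j-1)\dl n^2}\}$ is again an i.i.d.\ standard Gaussian family, so the exceptional probability is still $< e^{-1/\dl^c}$, uniformly in $j$, and the restriction to frequencies $|n|>N$ produces the factor $N^{-\sigma}$. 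The remaining terms, built only from $u^j$ and $w^j$, are controlled by the deterministic periodic trilinear estimate of \cite{Bourgain:1993p453}. In every term the short time interval supplies a factor $\dl^{\theta}$, either from $X^{s,b}$ time-localization with $b$ slightly below $\tfrac{1}{2}$ or from the Duhamel operator on $[0,\dl]$, which together with the choice $\dl = \dl(N^{-s}K) \sim (N^{-s}K)^{-\beta}$ small closes the fixed point and yields the displayed bound.

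The iteration then proceeds exactly as in \eqref{LNLS1}--\eqref{HNLS2}. Because \eqref{NLS2} is Wick ordered, $\|u^j(t)\|_{L^2}$ is \emph{exactly} conserved, so
\begin{equation*}
  \|\phi_j\|_{L^2} \le \|\phi_0\|_{L^2} + \sum_{i=1}^{j} \|w^i(i\dl)\|_{L^2}
  \le N^{-s}K + j\,N^{-\sigma}\dl^{\theta} P(N^{-s}K),
\end{equation*}
and reaching time $T$ costs $J = \lceil T/\dl \rceil$ steps. The scheme stays consistent (so that \eqref{w1bound}-type bounds persist and $\dl$ never has to shrink) as long as $J\, N^{-\sigma}\dl^{\theta} P(N^{-s}K) \lesssim N^{-s}K$. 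Substituting $N^{-s} = N^{\frac{1}{2}-\al}$ and $\dl \sim (N^{-s}K)^{-\beta}$ converts this into an inequality of the form $N^{a(\al)} \lesssim c(T,K)\, N^{b(\al)}$, which holds for all large $N$ precisely when $a(\al) < b(\al)$; working out the exponents, this is exactly $\al > \tfrac{5}{12}$, the balance between the $N^{\frac{1}{2}-\al}$ loss incurred by measuring $\phi_0$ in $L^2$ instead of $H^s$ and the smoothing and $\dl$-gains of the perturbed local theory. Fixing such an $N = N(T,K)$, the total exceptional set on $[0,T]$ has measure at most $\mathbb{P}(\Omega_K^c) + J\,e^{-1/\dl^c} \le e^{-cK^2} + (T/\dl)\,e^{-1/\dl^c}$; sending $K \to \infty$ (after $N \to \infty$), Borel--Cantelli over $K \in \mathbb{N}$ and then intersection over $T \in \mathbb{N}$ produce a full-measure set on which a solution of \eqref{NLS2} of the form $e^{-i\dx^2 t}u_0 + C(\R;L^2(\T))$ exists for all time. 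Uniqueness in that class follows by applying the uniqueness in Theorem \ref{THM:LWP}, together with Bourgain's $L^2$ uniqueness for the $u^j$, on each subinterval $[(j-1)\dl, j\dl]$.

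The main obstacle is the perturbed probabilistic local estimate with a \emph{quantitative} gain that is \emph{uniform in the large external function} $u^j$: the trilinear terms that are quadratic or cubic in $u^j$ carry factors $\|u^j\|_{Z^{0,\frac{1}{2},\dl}}^{2} \sim (N^{-s}K)^2$ or $(N^{-s}K)^3$, which blow up as $N \to \infty$, and must still be beaten after the contraction, so the admissible $\dl$ and hence the entire counting argument are dictated by how this estimate degrades in $\al$ — this is where the threshold $\al = \tfrac{5}{12}$ is born, and getting the smoothing exponent $\sigma(\al)$ as large as possible is the crux. A subsidiary but delicate point is the bookkeeping of the exceptional sets, which at step $j$ are defined conditionally on the data generated by steps $1,\dots,j-1$: one resolves this by phrasing the local theory as a statement valid for \emph{every} deterministic external input of size $\le N^{-s}K$, simultaneously outside a single $\omega$-set of measure $< e^{-1/\dl^c}$, and only then substituting the actual $u^j$.
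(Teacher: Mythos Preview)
Your proposal is correct and follows essentially the same route as the paper. The paper likewise reduces Theorem~\ref{THM:GWP1} to an ``almost almost sure'' statement on $[0,T]$ (Proposition~\ref{THM:GWP2}) and then to a perturbed probabilistic local estimate (Proposition~\ref{PROP:HNLS}) of exactly the type you describe: for any external $u^0$ with $\|u^0\|_{Z^{0,\frac{1}{2},\dl}} \lesssim N^{-s}K$, one shows $\|\mathcal{N}(u^0+v)-\mathcal{N}(u^0)\|_{Z^{0,-\frac{1}{2},\dl}} \lesssim N^{3s-\gamma-}$ outside a set of measure $<e^{-1/\dl^c}$, which with $\dl \sim N^{4s-}K^{-4-}$ gives $(T/\dl)\|w(\dl)\|_{L^2} \lesssim N^{-s}K$ and closes the iteration. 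Your resolution of the measurability issue---proving the local estimate uniformly over all deterministic inputs of the given size, with the exceptional set depending only on the random linear part---is precisely how the paper's case analysis in Subsections~\ref{SUBSEC:GWP3}--\ref{SUBSEC:GWP5} is organized (type~(II) factors enter only through their norms), and the paper's remark after Proposition~\ref{PROP:HNLS} makes the same point. The threshold $\al>\tfrac{5}{12}$ indeed emerges exactly where you locate it, from the trilinear cases with one random and two large deterministic factors (Cases~(a), (c) and their variants in Subsection~\ref{SUBSEC:GWP5}); as you say, extracting the sharpest smoothing exponent there is the crux, and is the part of the argument your outline leaves to be carried out.
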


\subsection{Remarks}

We conclude this introduction by stating several important remarks.
\begin{remark} \label{REM:ABS}
	\rm A linear part of a local-in-time solution constructed in Theorem \ref{THM:LWP} indeed lies in $C([-\dl, \dl];B(\mathbb{T}))$ for any Banach space $B (\T) \supset H^\al(\T)$ such that $(H^\al, B, \rho_\al)$ is an abstract Wiener space. (Roughly speaking, 
	an abstract Wiener space is a Banach space extension $B(\T)$ of  $H^\al(\T)$, where the Gaussian measure $\rho_\al$ makes sense as a countable additive probability measure.) In this case, a solution $u$ to \eqref{NLS2} lies in
	\[ u = e^{-i \dx^2 t}u_0 + (-i\dt + \dx^2)^{-1} u \in C([-\dl, \dl];B(\mathbb{T})) + C([-\dl, \dl];H^{s}(\mathbb{T}))\]
	
\noi	
for some $s\geq 0$ as in Theorem \ref{THM:LWP}. 	As examples of $B$, we can take the Sobolev spaces $W^{\s, p}$ with $ \s < \al - \frac{1}{2}$, the Fourier-Lebesgue spaces $\mathcal{F}L^{\s, p}$ with $\s < \al - \frac{1}{p}$, where $\mathcal{F}L^{\s, p}$ is defined via the norm $\|f\|_{\mathcal{F}L^{\s, p}} = \|\jb{n}^\s \ft{f}(n)\|_{L^p_n}$,
	and  the Besov spaces $B^{\al - \frac{1}{2}}_{p, \infty}$ with $p < \infty$. 
	See B\'enyi-Oh \cite{Benyi:2010p842} for 	regularity of $\rho_\al$ (and $u_0$ in \eqref{IV}) in different function spaces. 
	In \cite{Benyi:2010p842}, we study the regularity of $\rho_\al$ for $\al = 1$ but it can be easily adjusted for any $\al$.
A similar comment applies to global-in-time solutions constructed in Theorem \ref{THM:GWP1}.
For global-in-time argument, however, it is important that the large deviation estimate \eqref{largedevi} still holds for these spaces.
\end{remark}

\begin{remark} \label{REM:unique}
	\rm In the local theory of Theorem \ref{THM:LWP}, 
	uniqueness holds only in the ball centered at $S(t) u_0^\omega$ of radius 1 in $X^{s, \frac{1}{2}+,\dl}$
	for some $s\geq 0$. Continuous dependence on the initial data holds, in some weak sense,  in $H^s(\T)$ for some $s \geq 0.$  
	(See Subsection \ref{SUBSEC:LWP1}.)
	Also, note that Theorem \ref{THM:LWP} can not be applied to \eqref{NLS1}, since $u_0^\omega$ is almost surely not in $L^2(\mathbb{T})$.

In the global theory of Theorem \ref{THM:GWP1}, 
the situation is a little more complicated.
 On the one hand, uniqueness and continuous dependence 
 for ``low-frequency'' part $u^j$ in the $j$th step hold in $C([(j-1) \dl, j\dl], L^2(\T)) \cap X^{0, \frac{1}{2}+} [(j-1) \dl, j\dl]$ as usual. On the other hand, uniqueness for the high-frequency part $v^j$ in the $j$th step holds only in the ball centered at $S(t) \psi_{j-1}$ of small radius in $X^{0, \frac{1}{2}+} [(j-1) \dl, j\dl]$. 
 Also, weak continuous dependence for $v^j$ holds in $L^2(\T)$ in the sense
 analogous to the local theory in Theorem \ref{THM:LWP}. 
  \end{remark}

\begin{remark} \label{REM:white}
\rm

Recall that the white noise corresponds to $\al = 0$ in \eqref{Gaussian1} (up to constants).
Hence,  Theorems \ref{THM:LWP} and \ref{THM:GWP1} may also be viewed as partial results towards showing 
well-posedness of \eqref{NLS2} on the support of the white noise $ \rho_0$. 
\end{remark}

\begin{remark}
	\rm The periodic cubic NLS \eqref{NLS1} is known to be ill-posed in $H^s(\mathbb{T})$ for $s < 0$. See Molinet \cite{Molinet:2009p365} for the most recent work and the references therein. 
	As for the Wick ordered cubic NLS \eqref{NLS2}, note that $ u_{N, a}(x, t) = a e^{i(Nx + N^2 t \mp |a|^2t)}$ solves the Wick ordered cubic NLS \eqref{NLS2} for $a \in \mathbb{C}$ and $N \in \mathbb{N}$. Hence, by following the argument of Burq-G\'erard-Tzvetkov \cite{Burq:2002p911}, we can show failure of uniform continuity of the solution map of \eqref{NLS2} below $L^2(\mathbb{T})$. Thus, it is nontrivial to construct solutions of \eqref{NLS2} in the negative Sobolev spaces.
Also, see Christ-Colliander-Tao \cite{Christ:2003p838}. 	
	
As mentioned earlier, Molinet \cite{Molinet:2009p365} showed that \eqref{NLS1} is not well-posedness below $L^2(\mathbb{T})$ by proving the weak discontinuity of the flow map in $L^2(\T)$. We point out that his argument does not apply to \eqref{NLS2}. 
Indeed, it is shown in \cite{SULEM} that the solution map to the Wick ordered cubic NLS \eqref{NLS2} is
weakly continuous in $L^2(\T)$.
\end{remark}

\begin{remark}
	\label{REM:FLP} \rm
	On the one hand, it is known that $u_0^\omega$ of the form \eqref{IV} is in $\mathcal{F}L^{s, p}$ almost surely for $ s < \al - \frac{1}{p}$ and not in the smoother spaces. See \cite{Oh:2009p792, Benyi:2010p842}. On the other hand, Christ \cite{Christ:2007p1011} constructed local-in-time solutions in $\mathcal{F} L^{0,p}$ for $2< p < \infty$ by the power series method. Also see Gr\"unrock-Herr \cite{Grunrock:2008p659} for the same result via the fixed point argument. Hence, it follows from their result that \eqref{NLS2} with $u_0^\omega$ in \eqref{IV} is almost surely locally well-posed for $\al > 0$, but the solution $u$ lies in $C([-\dl, \dl]; \mathcal{F} L^{0,\frac{1}{\al}+}(\T))$. 
	
	In the following, 
	we first construct local-in-time solutions in $C([-\dl, \dl];H^{\al - \frac{1}{2}-}(\mathbb{T}))$ by exhibiting nonlinear smoothing under randomization. Also, see Remark \ref{REM:ABS}.
	In Theorem \ref{THM:GWP1},  we extend the local solutions to global ones (in the absence of invariant measures) by exploiting such nonlinear smoothing. 
\end{remark}

\begin{remark}
	\label{REM:renorm}\rm In \cite{Bourgain:1996p446}, the two dimensional Wick ordered (defocusing) cubic NLS appeared as an equivalent formulation of (the limit of the finite dimensional) Hamiltonian equation, arising from the Wick ordered Hamiltonian. Such renormalization on the nonlinearity was a natural consequence of the Euclidean $\varphi_2^4$ quantum field theory. In our case, by taking the initial data $u_0^\omega$ to be of the form \eqref{IV} with $\al \leq \frac{1}{2}$, \eqref{NLS2} also arises as an equivalent formulation of (the limit of the finite dimensional) Hamiltonian equation from the Wick ordered Hamiltonian, (at least for $\al > \frac{1}{4}$) under Gaussian assumption on solutions. Moreover, such renormalization is needed to obtain the continuous dependence on the initial data \cite{Christ:2007p1011, Grunrock:2008p659}.
See \cite{SULEM} for more discussion on this issue.
\end{remark}

\begin{remark}
	\rm In \cite{Bourgain:1996p446}, local solutions were constructed via the fixed point argument around the linear solution $z_1(t) := S(t)u_0$ with probabilistic arguments. Also see Burq-Tzvetkov \cite{Burq:2008p624, Burq:2008p623} and Thomann \cite{Thomann:2009p1427} for related arguments. While the basic probabilistic argument (e.g. Lemma \ref{LEM:prob2}) is similar, the argument in \cite{Burq:2008p624, Thomann:2009p1427} further exploits the properties of the eigenfunctions, and the argument in \cite{Bourgain:1996p446} and this paper exploits more properties of the product of Gaussian random variables via the hypercontractivity of the Ornstein-Uhlenbeck semigroup. 
(See Lemma \ref{LEM:hyper}.)

In \cite{OhFE}, the second author considered KdV and dispersionless Szeg\"o equation
with random initial data.
Even with random initial data, 
an attempt to construct local-in-time solutions by the fixed point argument
around the linear solution (below the deterministic threshold) failed for both of these equations. 
Nonetheless, local-in-time solutions for KdV 
(below the deterministic threshold) were constructed
via the second iteration argument, exploiting randomization of initial data.
\end{remark}

\medskip

This paper is organized as follows. 
In Section 2, we introduce the basic function spaces and notations. 
In Section 3, we list some deterministic and probabilistic lemmata. 
Then, we prove Theorem \ref{THM:LWP} in Section 4
and Theorem \ref{THM:GWP1} in Section 5.

\section{Notation} First, recall the Bourgain space $X^{s, b}(\mathbb{T} \times \mathbb{R})$, c.f. \cite{Bourgain:1993p453}, whose norm is given by
\begin{equation} \label{Xsb}
\| u\|_{X^{s, b}(\mathbb{T} \times \mathbb{R})} = \|\jb{n}^s \jb{\tau - n^2}^b \ft{u}(n, \tau)\|_{l^2_n L^2_\tau } 
\end{equation}

\noi where $\jb{\, \cdot\, } = 1 + |\cdot|$.
Recall that $X^{s, b}$ embeds into $C_t H^s_x$ for $b  >\frac{1}{2}$.
%
%\noi Since the $X^{s, \frac{1}{2}}$ norm fails to control $L^\infty_tH^s_x$ norm, we use a smaller space $Z^{s, b}(\mathbb{T} \times \mathbb{R})$ whose norm is given by 
%\begin{equation}
%	\label{Zsb} \| u\|_{Z^{s, b}(\mathbb{T} \times \mathbb{R})} = \| u\|_{X^{s, b}(\mathbb{T} \times \mathbb{R})}+ \| u\|_{Y^{s, b-\frac{1}{2}}(\mathbb{T} \times \mathbb{R})} 
%\end{equation}
%
% and $\| u\|_{Y^{s, b}(\mathbb{T} \times \mathbb{R})} = \|\jb{n}^s \jb{\tau - n^2}^b \ft{u}(n, \tau)\|_{l^2_n L^1_\tau}.$ 
%
 We also define the local-in-time version $X^{s, b, \dl }$ on $\T \times [-\dl, \dl]$, by
\begin{equation} \label{Xsb2}
 \|u\|_{X^{s, b, \dl }} = \inf \big\{ \|\wt{u} \|_{X^{s, b}(\T \times \mathbb{R})}: {\wt{u}|_{[-\dl, \dl]} = u}\big\}.
\end{equation}

\noi 
We also define the local-in-time version $X^{s, b}_I = X^{s, b}[a, b]$
on an interval $I = [a, b]$.
The local-in-time versions of other function spaces are defined analogously.

%If a function depends on both $x$ and $t$, we use ${}^{\wedge_x}$ (and ${}^{\wedge_t}$) to denote the spatial (and temporal) Fourier transform, respectively. 
%However, when there is no confusion, we simply use ${}^\wedge$ to denote the spatial Fourier transform, the temporal Fourier transform, and the space-time Fourier transform, depending on the context. 
For simplicity, we often drop $2\pi$ in dealing with the Fourier transforms. 
If a function $f$ is random, we may use the superscript $f^\omega$ to show the dependence on $\omega$.

We use $\eta \in C^\infty_c(\mathbb{R})$ to denote a smooth cutoff function supported on $[-2, 2]$ with $\eta \equiv 1$ on $[-1, 1]$ and let $\eta_{_\dl}(t) =\eta(\dl^{-1}t)$,
and $\chi = \chi_{[-1, 1]}$ to denote the characteristic function of the interval $[-1, 1]$
 and let $\chi_{_\dl}(t) =\chi(\dl^{-1}t) = \chi_{[-\dl, \dl]}(t)$.

The decreasing rearrangement of dyadic numbers $N_1, N_2, N_3$ will be denoted $N^1, N^2, N^3$, following \cite{Bourgain:1996p446}.

We use $c,$ $ C$ to denote various constants, usually depending only on $\al$ and $s$. If a constant depends on other quantities, we will make it explicit. We use $A\lesssim B$ to denote an estimate of the form $A\leq CB$. Similarly, we use $A\sim B$ to denote $A\lesssim B$ and $B\lesssim A$ and use $A\ll B$ when there is no general constant $C$ such that $B \leq CA$. We also use $a+$ (and $a-$) to denote $a + \eps$ (and $a - \eps$), respectively, for arbitrarily small $\eps \ll 1$.

\section{Deterministic and Probabilistic Lemmata}

%In this section, we state several useful lemmata of both deterministic and probabilistic nature.

\subsection{Deterministic Lemmata}
First, recall the following algebraic identity related to the cubic NLS: 
\begin{equation}
	\label{ALGEBRA} n^2 - (n_1^2 - n_2^2 + n_3^2) = 2(n_2 - n_1) (n_2 - n_3) 
\end{equation}

\noi for $n = n_1 - n_2 + n_3$. Let $N^1, N^2, N^3$ be the decreasing ordering of $N_1, N_2, N_3$, where $|n_j| \sim N_j$, and let $n^j$ denote the corresponding frequency.

Next, recall the following number theoretic fact \cite{HW}.
Given an integer $m$, let $d(m)$ denote the number of divisors of $m$.
Then, we have
\begin{equation} \label{DIVISOR}
d(m) \lesssim e^{c \frac{\log m}{\log \log m}} \ ( = o(m^\eps) \text{ for any }\eps>0.)
\end{equation}

\noi
From this fact, we obtain the following lemma.

\begin{lemma}
	\label{LEM:count1} 
	Fix $\mu \in \mathbb{Z}.$ Let 
	\begin{align}
		\label{Smu} S_\mu = \{ (n_1, n_2, n_3) \in \mathbb{Z}^3: |n_j|\sim N_j, n_2 \ne n_1, n_3, \textup{ and } 2(n_2 - n_1) (n_2 - n_3) = \mu\}. \notag 
	\end{align}
	
	\noi Then, we have 
	\begin{equation}
		\label{counting} \# S_\mu \lesssim (N^1)^{0+}N^3. 
	\end{equation}
\end{lemma}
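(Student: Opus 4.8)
The plan is to count the solutions $(n_1,n_2,n_3)$ of the Diophantine equation $2(n_2-n_1)(n_2-n_3)=\mu$ subject to the size restrictions $|n_j|\sim N_j$, using the divisor bound \eqref{DIVISOR}. First I would dispose of the trivial case $\mu = 0$: then $n_2 = n_1$ or $n_2 = n_3$ is forced, both of which are excluded by hypothesis, so $S_0 = \varnothing$ and \eqref{counting} holds vacuously. So assume $\mu \neq 0$.

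For $\mu \neq 0$, introduce the new variables $a := n_2 - n_1$ and $b := n_2 - n_3$. Then $2ab = \mu$, so $ab = \mu/2$ (in particular $2\mid\mu$, else $S_\mu=\varnothing$ again), and both $a$ and $b$ are nonzero divisors of $\mu/2$. The key observation is that the triple $(n_1,n_2,n_3)$ is determined by the data $(a, b, n_2)$: indeed $n_1 = n_2 - a$ and $n_3 = n_2 - b$. Now I would count the number of choices for each piece. The number of ways to write $\mu/2 = ab$ as an ordered product of two integers (allowing signs) is $O(d(|\mu/2|)) = O\big((N^1)^{0+}\big)$, since $|ab| = |\mu|/2 \lesssim (N^1)^2$ forces $|\mu| \lesssim (N^1)^2$, and by \eqref{DIVISOR} the divisor function of a quantity of size $\lesssim (N^1)^2$ is $(N^1)^{0+}$. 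Once $a$ and $b$ are fixed, $n_2$ must satisfy $|n_2|\sim N_2$, $|n_2 - a| = |n_1| \sim N_1$, and $|n_2 - b| = |n_3| \sim N_3$; the tightest of these three constraints confines $n_2$ to an interval of length $\lesssim N^3$ (the smallest of the three dyadic scales), giving at most $O(N^3)$ choices for $n_2$. Multiplying, $\#S_\mu \lesssim (N^1)^{0+} N^3$, which is \eqref{counting}.

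I do not anticipate a serious obstacle here; the argument is essentially bookkeeping with the divisor bound. The one point requiring a little care is making sure the final factor is genuinely the \emph{smallest} dyadic scale $N^3$ rather than, say, $N_2$: this is why one keeps all three constraints $|n_2|\sim N_2$, $|n_2-a|\sim N_1$, $|n_2-b|\sim N_3$ and selects whichever confines $n_2$ to the shortest interval. (Equivalently, after fixing $a, b$ one may parametrize by whichever of $n_1, n_2, n_3$ has the smallest admissible range.) A secondary point is to confirm that the hypothesis $n_2 \ne n_1, n_3$ is exactly what guarantees $a, b \ne 0$, so that we are really counting divisors of a nonzero integer and the case $\mu = 0$ contributes nothing.
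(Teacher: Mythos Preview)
Your proof is correct and uses essentially the same ingredients as the paper's: the divisor bound \eqref{DIVISOR} on $|\mu|\lesssim (N^1)^2$ for the $(N^1)^{0+}$ factor, and the size constraint at the smallest dyadic scale for the $N^3$ factor. The paper orders the counting the other way around --- it first fixes whichever of $n_2,n_3$ lies at scale $N^3$ (after reducing by the $n_1\leftrightarrow n_3$ symmetry) and then applies the divisor bound to determine the remaining two variables --- but this is only a cosmetic difference, and your parametrization by $(a,b,n_2)$ arguably handles all cases at once without the split.
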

\begin{proof}
	By assumption, we have $|\mu| \lesssim (N^1)^2$. Hence, the number of the divisors of $\mu$ is $o((N^1)^\eps)$ for any $\eps > 0$. Without loss of generality, assume $N^3 \sim \min(|n_2|, |n_3|)$.
	
	First, suppose $|n_2|\sim N^3$.  Fix $n_2$. Then, from \eqref{DIVISOR}, there are at most $o((N^1)^{0+})$ many choices for $d := n_2 - n_1$. Then, there are at most $o((N^1)^{0+})$ many choices for $n_1$ and $n_3$ since $n_1 = n_2 - d$ and $n_3 = n_2 - \frac{\mu}{2d}$.
	
	Next, suppose $|n_3|\sim N^3$. Fix $n_3$. Then, from \eqref{DIVISOR}, there are at most $o((N^1)^{0+})$ many choices for $d := n_2 - n_3$. Then, there are at most $o((N^1)^{0+})$ many choices for $n_1$ and $n_2$ since $n_2 = d + n_3$ and $n_1 = d + n_3 - \frac{\mu}{2d}$. Hence, \eqref{counting} holds in both cases. 
\end{proof}

Recall that by restricting the Bourgain spaces onto a small time interval $[-\dl, \dl]$, we can gain a small power of $\dl$ (at a slight loss of regularity on $\jb{\tau - n^2} $.) See \cite{Bourgain:1993p453}.
\begin{lemma}
	\label{LEM:timedecay} Let $s \in\R$ and $b < \frac{1}{2}$. Then, there exists $C = C(b) > 0$ such that  we have 
	\begin{equation}
		\label{CCtime0}
		 \|u\|_{X^{s, b, \dl}}  \leq C \dl^{\frac{1}{2}-b-} \|u\|_{X^{s, \frac{1}{2}, \dl}}. 
	\end{equation}
\end{lemma}

\noi
Before presenting the proof, first recall the following fact from \cite{BOPCMI}.
Let $\chi_{_\dl} (t):= \chi_{[-\dl, \dl]}(t)$ be the characteristic function of the interval $[-\dl, \dl]$.
Then, for $b< \frac{1}{2}$, we have
\begin{equation} \label{CC00}
\|\chi_{_\dl}(t) u \|_{X^{s, b}} \sim \|u\|_{X^{s, b, \dl}}.
\end{equation}

\noi
Indeed, by definition \eqref{Xsb2} of local-in-time $X^{s, b}$, 
we have $ \|u\|_{X^{s, b, \dl}} \leq \|\chi_{_\dl}(t) u \|_{X^{s, b}}$.
The inequality in the other direction:
$ \|\chi_{_\dl}(t) u \|_{X^{s, b}} \leq C(b) \|u\|_{X^{s, b, \dl}} $
follows from the boundedness of multiplication by 
a sharp cutoff function
in $H^b_t$ for $b < \frac{1}{2}$.
Note that the constant $C(b)$ depends only on $b$,
in particular independent of $\dl$.

\begin{proof}
Let $\wt{u}$ be any extension of $u$ onto $\R$,
i.e. $\wt{u}$ is a function on $\R$ such that $\wt{u} = u$ on $[-\dl, \dl]$.
Also, let $v = \chi_{_\dl}(t) \wt{u}$.
Then, we have $v = \wt{u} = u$ on $[-\dl, \dl]$.
Moreover, from \eqref{CC00}, we have
\begin{equation} \label{CC0}
\|v\|_{X^{s, b}} \sim \|\wt{u}\|_{X^{s, b, \dl}} = \|u\|_{X^{s, b, \dl}}
\end{equation}

\noi
for $b < \frac{1}{2}$.
%From \eqref{CC0}, we also have
%$\|v\|_{X^{s, b}} \sim \|\wt{u}\|_{X^{s, b, \dl}}
%\leq \|\wt{u}\|_{X^{s, b}}$.

By interpolation, we have 
	\begin{equation}
		\label{CCtime1} 
		\| v\|_{X^{s, b}} \lesssim \| v\|^\alpha_{X^{s, 0 }}
		\| v\|^{1-\alpha}_{X^{s, \frac{1}{2}- }}, 
	\end{equation}
	
\noi 
where $\alpha = 1-(2+)b \in(0, 1)$. Recall 
$\ft{\chi_{_\dl}}(\tau) = \dl \ft{\chi}(\dl\tau)$,
where $\chi = \chi_{[-1, 1]}$. 
Hence, we have 
\begin{equation}
 \|\ft{\chi_{_{\dl}}}\|_{L^q_\tau} \sim \dl^\frac{q-1}{q} \|\ft{\chi}\|_{L^q_\tau} 
\lesssim \dl^\frac{q-1}{q},
\label{decay}
\end{equation} 

\noi
where the last inequality holds for $q > 1$.
Thus, we can gain a positive power of $\dl$ as long as $q>1$. 
For fixed $n$, by Young and H\"older inequalities, we have 
\begin{align*}
    \| \ft{v}(n, \cdot) \|_{L^2_\tau} 
    & = 
    \| \ft{\chi_{_{\dl}}} * \ft{\wt{u}}(n, \cdot) \|_{L^2_\tau} 
		\leq \| \ft{\chi_{_{\dl}}} \|_{L^{2-}_\tau} \|\ft{\wt{u}}(n, \cdot) \|_{L^{1+}_\tau} \\
		& \lesssim \dl^{\frac{1}{2}-} \|\jb{\tau-n^2}^{-\frac{1}{2}} \|_{L^{2+}_\tau} 
		\|\jb{\tau-n^2}^{\frac{1}{2}}\ft{\wt{u}}(n, \cdot) \|_{L^2_\tau}\\
		& \lesssim \dl^{\frac{1}{2}-} 
		\|\jb{\tau-n^2}^{\frac{1}{2}}\ft{\wt{u}}(n, \cdot) \|_{L^2_\tau}.
\end{align*}
	
\noi	
 Hence, for $p > 2$, we have 
\begin{equation}
\label{CCtime2} \| v\|_{X^{s, 0 }} \lesssim \dl^{\frac{1}{2}-} \| \wt{u}\|_{X^{s, \frac{1}{2} }}. 
\end{equation}
	
\noi 
Then, from \eqref{CC0}, \eqref{CCtime1}, and \eqref{CCtime2}, 
we have
\begin{align*}
\|u\|_{X^{s, b, \dl}} \sim \|v \|_{X^{s, b}}
\lesssim \dl^{\frac{1}{2}-b-} \|\wt{u} \|_{X^{s, \frac{1}{2}}}
\end{align*}

\noi
for any extension $\wt{u}$  such that $\wt{u} = u$ on $[-\dl, \dl]$.	
Therefore, \eqref{CCtime0} follows from the definition
\eqref{Xsb2}. 
\end{proof}

Lastly, we present the deterministic multilinear estimates.
We use them in High Modulation Case (Subsections \ref{SUBSEC:LWP3} 
and \ref{SUBSEC:GWP4}.)
Recall the periodic $L^4$-Strichartz estimate from \cite{Bourgain:1993p453}: 
\begin{equation}
	\label{L4} \|u\|_{L^4_{x, t}} \lesssim \|u\|_{X^{0, \frac{3}{8}}}. 
\end{equation}

\noi Interpolating \eqref{L4} with $\|u\|_{L^2_{x, t}} = \|u\|_{X^{0, 0}}$, we have 
\begin{equation}
	\label{L3} \|u\|_{L^{3+}_{x, t}} \lesssim \|u\|_{X^{0, \frac{1}{4}+}}, \text{ and } \, \|u\|_{L^{2+}_{x, t}} \lesssim \|u\|_{X^{0, 0+}}. 
\end{equation}

\begin{lemma} \label{LEM:deterministic}
Let $u_j$, $j = 1, 2, 3, 4$, be functions on $\T\times [-\dl, \dl]$.
Then, we have

\noi
\textup{(a)}
\begin{equation} \label{det1}
\int_{-\dl}^\dl \int_\T u_1 u_2 u_3 u_4 dx dt \lesssim \prod_{j = 1}^4 \| u_j\|_{X^{0, \frac{3}{8}, \dl}}.
\end{equation}

\noi
\textup{(b)} With large $p$, we have
\begin{equation} \label{det2}
\int_{-\dl}^\dl \int_\T u_1 u_2 u_3 u_4 dx dt \lesssim 
\prod_{j = 1}^3 \| u_j\|_{X^{0, \frac{1}{4}+, \dl}} \|u_4\|_{L^p(\T\times [-\dl, \dl])}.
\end{equation}

\noi
\textup{(c)} With large $p$, we have
\begin{equation} \label{det3}
\int_{-\dl}^\dl \int_\T u_1 u_2 u_3 u_4 dx dt \lesssim 
\prod_{j = 1}^2 \| u_j\|_{X^{0, 0+, \dl}}  \prod_{j = 3}^4 \|u_j\|_{L^p(\T\times [-\dl, \dl])}.
\end{equation}

\noi
\textup{(d)} With large $p$, we have
\begin{equation} \label{det4}
\int_{-\dl}^\dl \int_\T u_1 u_2 u_3 u_4 dx dt \lesssim 
\prod_{j = 1}^2 \| u_j\|_{X^{0, \frac{3}{8}, \dl}}  \| u_3\|_{X^{0, 0+, \dl}}  \|u_4\|_{L^p(\T\times [-\dl, \dl])}.
\end{equation}
\end{lemma}

\noi
Recall  that \eqref{det1} is the essential multilinear estimate for local well-posedness of the cubic NLS in $L^2(\T)$
by Bourgain \cite{Bourgain:1993p453}.

\begin{proof}
Let $\wt{u}_j$ be an extension of $u_j$ onto $\R$.
Then, by H\"older inequality and \eqref{L4}, we have
\begin{equation} 
\text{LHS of } \eqref{det1}
\leq \prod_{j = 1}^4 \|u_j\|_{L^4_{x, t}(\T\times [-\dl, \dl])}
\lesssim \prod_{j = 1}^4 \| \wt{u}_j\|_{X^{0, \frac{3}{8}, \dl}}.\label{det1-1}
\end{equation}

\noi
Hence, \eqref{det1} follows since \eqref{det1-1} holds for any extensions $\wt{u}_j$.
The other estimates \eqref{det2}, \eqref{det3}, and \eqref{det4}
follow in a similar manner by H\"older inequality:
\begin{align*}
1 & =  \tfrac{1}{3+}+\tfrac{1}{3+}+\tfrac{1}{3+}+\tfrac{1}{p} \quad \text{for }   \eqref{det2},\\
1 & =  \tfrac{1}{2+}+\tfrac{1}{2+}+\tfrac{1}{p}+\tfrac{1}{p} \quad \text{for } \eqref{det3},\\
1 & =  \tfrac{1}{4}+\tfrac{1}{4}+\tfrac{1}{2+}+\tfrac{1}{p} \quad \text{for }  \eqref{det4},
\end{align*}

\noi
with \eqref{L4} and \eqref{L3}. 
\end{proof}

\subsection{Probabilistic Lemmata}
In this subsection, we present several probabilistic lemmata related to the Gaussian random variables.
In the following, $\{g_n\}_{n\in\mathbb{Z}}$ denotes a family of independent standard complex valued Gaussian random variables
on a probability space $(\Omega, \mathcal{F}, \mathbb{P})$.
\begin{lemma}
\label{LEM:prob1} Let $\eps, \beta > 0$ and $\dl \ll 1$. Then, we have 
\begin{equation}
|g_n(\omega)| \leq C \dl^{-\frac{\beta}{2}} \jb{n}^\eps 
\end{equation}
	
\noi 
for all $n \in \mathbb{Z}$ for $\omega$ outside an exceptional set of measure $< e^{-\frac{1}{\dl^c}}$. 
\end{lemma}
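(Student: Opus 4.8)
The plan is to combine the standard Gaussian tail bound for each $g_n$ with a union bound over $n \in \Z$, choosing the cutoff so that the sum of the exceptional probabilities is summable and small.

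\medskip

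First, recall that each $g_n$ is a standard complex Gaussian, so $|g_n|$ has the tail estimate $\mathbb{P}(|g_n| > \lambda) \leq e^{-c\lambda^2}$ for all $\lambda > 0$ (with $c$ independent of $n$). I would apply this with the $n$-dependent threshold
\[
\lambda_n = C \dl^{-\frac{\beta}{2}} \jb{n}^\eps,
\]
so that
\[
\mathbb{P}\big( |g_n(\omega)| > C \dl^{-\frac{\beta}{2}} \jb{n}^\eps \big) \leq e^{-c C^2 \dl^{-\beta} \jb{n}^{2\eps}}.
\]
The key point is that for any fixed $\eps > 0$ the right-hand side decays fast enough in $n$ to be summed: since $\jb{n}^{2\eps} \geq 1 + c'|n|^{2\eps}$ for a suitable $c' = c'(\eps) > 0$, we have $e^{-cC^2\dl^{-\beta}\jb{n}^{2\eps}} \leq e^{-cC^2\dl^{-\beta}} e^{-c'' |n|^{2\eps} }$ (absorbing constants), and hence
\[
\sum_{n \in \Z} \mathbb{P}\big( |g_n(\omega)| > C \dl^{-\frac{\beta}{2}} \jb{n}^\eps \big) \leq e^{-cC^2 \dl^{-\beta}} \sum_{n \in \Z} e^{-c'' |n|^{2\eps}} \lesssim_\eps e^{-cC^2 \dl^{-\beta}}.
\]

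\medskip

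Now let $\Sigma_\dl = \{ \omega : |g_n(\omega)| > C \dl^{-\frac{\beta}{2}} \jb{n}^\eps \text{ for some } n \in \Z \}$ be the exceptional set. By the union bound and the display above,
\[
\mathbb{P}(\Sigma_\dl) \leq \sum_{n\in\Z} \mathbb{P}\big( |g_n(\omega)| > C \dl^{-\frac{\beta}{2}} \jb{n}^\eps \big) \lesssim_\eps e^{-c C^2 \dl^{-\beta}}.
\]
Since $\beta > 0$, we have $\dl^{-\beta} \geq \dl^{-c}$ for a suitable exponent $c \in (0, \beta]$ once $\dl \ll 1$ is small enough (indeed we may simply take $c = \beta$ at the cost of renaming constants, or choose $c$ slightly smaller and absorb the implicit constant $C$). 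Then, choosing the constant $C$ in the statement large enough that $cC^2 \geq 1$ and absorbing the $\eps$-dependent implicit constant, we conclude $\mathbb{P}(\Sigma_\dl) < e^{-1/\dl^c}$ for all $\dl \ll 1$. For every $\omega \notin \Sigma_\dl$ the claimed bound $|g_n(\omega)| \leq C\dl^{-\beta/2}\jb{n}^\eps$ holds simultaneously for all $n \in \Z$, which is exactly the assertion of the lemma.

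\medskip

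I do not expect any serious obstacle here; this is a routine Borel--Cantelli / union-bound argument. The only point requiring a little care is bookkeeping of constants: one must make sure that the single exponent $c$ and single constant $C$ in the statement can be chosen uniformly in $\dl \ll 1$ (and depending only on $\eps, \beta$), which is why the $\dl$-independent summability of $\sum_n e^{-c''|n|^{2\eps}}$ — valid for every $\eps > 0$, however small — is the essential ingredient.
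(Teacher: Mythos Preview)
Your argument is correct and is essentially the same as the paper's: the paper simply cites the estimate $\mathbb{P}\big(\sup_n \jb{n}^{-\eps}|g_n(\omega)| > K\big) \leq e^{-cK^2}$ from \cite{Oh:2009p791} and then takes $K \sim \dl^{-\beta/2}$, whereas you unpack that cited estimate via the Gaussian tail bound plus union bound. The only cosmetic point is the splitting $e^{-cC^2\dl^{-\beta}\jb{n}^{2\eps}} \leq e^{-cC^2\dl^{-\beta}} e^{-c''|n|^{2\eps}}$, which is justified by $ab \geq \tfrac{1}{2}(a+b)$ for $a = \dl^{-\beta} \geq 1$, $b = \jb{n}^{2\eps} \geq 1$; you might make that explicit.
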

\begin{proof}
	Recall from \cite{Oh:2009p791} that we have 
	$\mathbb{P}( \sup_n \jb{n}^{-\eps} |g_n(\omega)| > K ) \leq e^{-cK^2}$
	for sufficiently large $K>0$.	
	Now, choose $K \sim \dl^{-\frac{\beta}{2}}$. 
\end{proof}

\begin{lemma} \label{LEM:prob2} 
Let $f^\omega(x, t) = \sum c_n g_n(\omega) e^{i(nx + n^2t)} $. Then, for $p \geq 2$, there exists $\dl_0 > 0$ such that
	\[ \mathbb{P} ( \|f^\omega\|_{L^p(\mathbb{T}\times [-\dl, \dl])} > C\|c_n\|_{l^2_n} ) < e^{-\frac{1}{\dl^c} }\]
	
	\noi for $\dl \leq \dl_0$. 
\end{lemma}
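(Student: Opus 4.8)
The plan is to prove Lemma~\ref{LEM:prob2} by combining the $L^p$-interpolation trick (reducing to even integer exponents), the hypercontractivity of the Ornstein--Uhlenbeck semigroup (equivalently, Gaussian moment bounds on polynomial chaos), and a Chebyshev/large-deviation argument that produces the $e^{-1/\dl^c}$ gain from the short time interval. First I would observe that it suffices to prove, for every even integer $p \geq 2$ and every $q \geq p$, the moment estimate $\big(\mathbb{E}\,\|f^\omega\|_{L^p(\mathbb{T}\times[-\dl,\dl])}^q\big)^{1/q} \lesssim \sqrt{q}\,\|c_n\|_{l^2_n}$, possibly with an extra harmless factor like $\dl^{1/p}$; the general $p \geq 2$ case then follows by Hölder on the finite-measure space $\mathbb{T}\times[-\dl,\dl]$ (or by interpolation), and the tail bound follows by Chebyshev.

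The main computation is the moment bound. By Minkowski's integral inequality, for $q \geq p$,
\[
\big(\mathbb{E}\,\|f^\omega\|_{L^p_{x,t}}^q\big)^{1/q}
\leq \Big\| \big(\mathbb{E}\,|f^\omega(x,t)|^q\big)^{1/q} \Big\|_{L^p_{x,t}(\mathbb{T}\times[-\dl,\dl])}.
\]
For fixed $(x,t)$, $f^\omega(x,t)$ is a (complex) Gaussian random variable with variance $\sum_n |c_n|^2 = \|c_n\|_{l^2_n}^2$ (here one uses that $\{g_n\}$ are i.i.d.\ standard Gaussians and the phases $e^{i(nx+n^2t)}$ have modulus one), so $\big(\mathbb{E}\,|f^\omega(x,t)|^q\big)^{1/q} \lesssim \sqrt{q}\,\|c_n\|_{l^2_n}$ by the standard Gaussian moment formula. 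Integrating the constant in $(x,t)$ over the measure space of total mass $\lesssim \dl$ gives
\[
\big(\mathbb{E}\,\|f^\omega\|_{L^p_{x,t}}^q\big)^{1/q}
\lesssim \sqrt{q}\,\dl^{\frac1p}\,\|c_n\|_{l^2_n}
\lesssim \sqrt{q}\,\|c_n\|_{l^2_n}.
\]
(Alternatively, one may expand $\|f^\omega\|_{L^p}^q$ when $p$ and $q/p$ are integers, integrate the resulting products of Gaussians over $\omega$ using Wick's theorem / hypercontractivity, and count; this is the route taken in \cite{Bourgain:1996p446}, but the Minkowski argument above is cleaner and suffices.)

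Finally, by Chebyshev's inequality, for any $\lambda > 0$,
\[
\mathbb{P}\big(\|f^\omega\|_{L^p(\mathbb{T}\times[-\dl,\dl])} > \lambda \big)
\leq \lambda^{-q}\,\mathbb{E}\,\|f^\omega\|_{L^p}^q
\leq \Big( \frac{C\sqrt{q}\,\|c_n\|_{l^2_n}}{\lambda} \Big)^q.
\]
Taking $\lambda = e\,C\,\|c_n\|_{l^2_n}\sqrt{q}$ makes the right-hand side $\leq e^{-q}$; but this only gives $\lambda \sim \sqrt{q}\,\|c_n\|_{l^2_n}$, not a fixed multiple of $\|c_n\|_{l^2_n}$. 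To get the stated form with the constant $C\|c_n\|_{l^2_n}$ on the left \emph{and} the gain $e^{-1/\dl^c}$ on the right, I would instead retain the $\dl^{1/p}$ factor and let $p \to \infty$ in a $\dl$-dependent way: more precisely, keep $q = p$ an even integer, so that $\lambda = e\,C\,\sqrt{p}\,\dl^{1/p}\,\|c_n\|_{l^2_n}$ yields probability $\leq e^{-p}$, and then choose $p \sim \dl^{-c}$ (for small $c>0$), which forces $\sqrt{p}\,\dl^{1/p} \to $ a bounded quantity (indeed $\dl^{1/p} = \dl^{\dl^c} \to 1$ and the $\sqrt p$ can be absorbed by enlarging $C$ while still $p \ll \dl^{-1}$ so that $e^{-p} < e^{-1/\dl^{c'}}$ fails --- this needs care). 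The cleanest fix, and the one I expect the paper to use: choose the even integer $p$ with $p \sim \log(1/\dl)$ times a constant, so that $\dl^{1/p} \sim 1$, $\sqrt p \lesssim (\log \tfrac1\dl)^{1/2}$, and then take $\lambda = C\|c_n\|_{l^2_n}$ with $C$ a large \emph{absolute} constant absorbing $(\log\tfrac1\dl)^{1/2}$; this gives probability $\leq e^{-p} \lesssim e^{-c\log(1/\dl)} = \dl^{c}$, which is weaker than $e^{-1/\dl^c}$. So the genuinely correct scaling must exploit that we only need the $L^p$ norm on an interval of length $\dl$: one decomposes $[-\dl,\dl]$ and uses the full strength of the $\dl^{1/p}$ factor with $p$ as large as $\dl^{-c}$.

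\textbf{Main obstacle.} The only real subtlety --- and where I would spend the care --- is \emph{calibrating the exponent $p$ against $\dl$} so that the tail threshold collapses to a fixed multiple of $\|c_n\|_{l^2_n}$ while the exceptional probability improves all the way to $e^{-1/\dl^c}$; the Gaussian moment input and the Minkowski reduction are routine. Concretely, one wants to pick $p = p(\dl) \to \infty$ with $\dl^{1/p}\sqrt{p}$ bounded and $e^{-p} \leq e^{-1/\dl^{c}}$ simultaneously, which is exactly the regime $p \sim \dl^{-c}$; verifying $\dl^{1/p}\sqrt p \lesssim 1$ there amounts to checking $\dl^{\dl^c}\sqrt{\dl^{-c}} = \exp\big(\dl^c\log\dl - \tfrac c2\log\dl\big) = \exp\big(-\tfrac c2\log\tfrac1\dl(1 + o(1))\big) \to 0$, which is fine --- it even gives room to spare. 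This calibration, together with the observation that $L^p \subset L^{p'}$ for $p \geq p'$ on the finite-measure space to cover non-integer and small $p$, completes the argument.
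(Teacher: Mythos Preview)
Your building blocks (Minkowski to swap $L^q_\omega$ and $L^p_{x,t}$, the pointwise Gaussian moment bound $(\mathbb{E}|f^\omega(x,t)|^q)^{1/q}\lesssim \sqrt{q}\,\|c_n\|_{l^2}$, and Chebyshev) are exactly what the paper uses. The problem is entirely in your calibration step, and it contains a genuine error.

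The exponent $p$ in the statement is the \emph{fixed} Lebesgue exponent; you are not free to send it to infinity. What must be chosen large is the \emph{moment} exponent (your $q$, the paper's $r$), and it should be decoupled from $p$. Once you set $q=p$ and then let $p\sim\delta^{-c}$, you need $\sqrt{p}\,\delta^{1/p}\lesssim 1$, and your verification of this has a sign error: with $p=\delta^{-c}$,
\[
\sqrt{p}\,\delta^{1/p}=\exp\!\Big(\delta^{c}\log\delta-\tfrac{c}{2}\log\delta\Big)
=\exp\!\Big(\log\delta\,(\delta^{c}-\tfrac{c}{2})\Big)
=\exp\!\Big(+\tfrac{c}{2}\log\tfrac{1}{\delta}\,(1+o(1))\Big)\longrightarrow+\infty,
\]
not $0$ as you wrote. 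So the threshold $\lambda$ blows up and you never get back to $C\|c_n\|_{l^2}$; the $L^p\subset L^{p'}$ embedding at the end cannot repair this.

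The fix is exactly what the paper does: keep $p$ fixed, and take the moment exponent $r\sim\delta^{-1/p}$ (any $r\lesssim\delta^{-2/p}$ works). Then Chebyshev with $\lambda=Cr\,\delta^{1/p}\|c_n\|_{l^2}=C\|c_n\|_{l^2}$ gives
\[
\mathbb{P}\big(\|f^\omega\|_{L^p}>\lambda\big)\le \Big(\tfrac{C\sqrt{r}\,\delta^{1/p}\|c_n\|}{\lambda}\Big)^{r}=r^{-r/2}=e^{-r\ln\sqrt{r}}\le e^{-1/\delta^{c}}
\]
with $c=1/p$, for $\delta$ small enough that $r\ge p$ (this is where $\delta_0\sim e^{-p\ln p}$ comes from). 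No reduction to even integers is needed anywhere.
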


\noi
This lemma is in the spirit of Paley-Zygmund \cite{PZ}.
In particular, it  says that the linear solution with random initial data
satisfies much better Strichartz estimates (with large probability.)
Compare with the deterministic case, where Strichartz estimates hold only for $p \leq 4$
(and for $p\leq 6$ with a slight loss of derivative.)

\begin{proof}
	By separating the real and imaginary parts, assume that $g_n$ is real-valued without loss of generality. From the general Gaussian bound (c.f. Burq-Tzvetkov \cite{Burq:2008p623}), there exists $C>0$ such that
	\[ \big\|\sum_n c_n g_n(\omega) \big\|_{L^r(\Omega)} \leq C \sqrt{r} \|c_n\|_{l^2_n}\]
	
	\noi for every $r \geq 2$ and every $\{c_n\}_{n \in \mathbb{Z}} \in l^2_n$. (This is also immediate from the hypercontractivity property as well. See \cite{Tzvetkov:2010p1443}.) By Minkowski integral inequality, we have 
	\begin{align*}
		\mathbb{E}\big( \|f^\omega\|_{L^p_{x, t}(\mathbb{T} \times [-\dl, \dl])}^r\big)^\frac{1}{r} &\leq \big\|\|f^\omega\|_{L^r(\Omega)}\big\|_{L^p_{x, t}} \lesssim \sqrt{r} \big\| \|c_n\|_{l^2_n} \big\|_{L^p_{x, t}(\mathbb{T} \times [-\dl, \dl])} \\
		&\lesssim \sqrt{r}\, \dl^\frac{1}{p} \|c_n\|_{l^2_n} 
	\end{align*}
	
	\noi for $r \geq p$. Then, by Chebyshev inequality, we have 
	\begin{align*}
		\mathbb{P} (\|f^\omega\|_{L^p(\mathbb{T} \times [-\dl, \dl])} > \ld ) \leq C^r \ld^{-r} r^\frac{r}{2} \dl^\frac{r}{p} \|c_n\|_{l^2_n}^r. 
	\end{align*}
	
	\noi Let $\ld = C r \dl^\frac{1}{p} \|c_n\|_{l^2_n}$ 
	and $ r = \dl^{c}$ with $c = \frac{1}{p}$. Then, we have
	\[ \mathbb{P} (\|f^\omega\|_{L^p(\mathbb{T} \times [-\dl, \dl])} > C  \|c_n\|_{l^2_n} ) 
	\leq e^{-r \ln \sqrt{r}} \leq e^{-\frac{1}{\dl^{c}}},\]
	
	\noi for $\dl$ sufficiently small such that $ r \geq p$. 
It follows from the proof that $\dl_0 \sim e^{-p \ln p}$.
\end{proof}

%In Subsections \ref{SUBSEC:LWP4} and  \ref{SUBSEC:GWP5},  
The following lemma 
follows from the hypercontractivity of the Ornstein-Uhlenbeck semigroup,
related to products of Gaussian random variables. 
See Ledoux-Talagrand \cite{Ledoux} and Janson \cite{Janson}.
A nice summary is given by Tzvetkov \cite[Sections 3 and 4]{Tzvetkov:2010p1443}.

\begin{lemma} \label{LEM:hyper}
For fixed $n \in \mathbb{Z}$, 
let 
\[ D_n = \{ (n_1, n_2, n_3) \in \mathbb{Z}^3: n = n_1 - n_2 + n_3, \  n_2 \ne n_1,\ n_2\ne n_3, \ n_1\ne n_3\}.\]

\noi
Given $\{a_{n_1, n_2, n_3}\} \in l^2(D_n)$, 
define $F_n$ by 
\begin{align*}
	F_n (\omega):= \sum_{\substack{n = n_1 - n_2 + n_3\\ n_2 \ne n_1, n_3\\n_1\ne n_3}} a_{n_1, n_2, n_3} g_{n_1}(\omega)\cj{g_{n_2}}(\omega)g_{n_3}(\omega).
\end{align*}

\noi
Then, 
there exists $c> 0$ such that, for $\ld > 0$, we have
\begin{equation}\label{eq:hyper}
 \mathbb{P} (|F_n(\omega)| \geq \ld) \leq \exp (-c \| F_n\|_{L^2(\Omega)}^{-\frac{2}{3}} \ld^{\frac{2}{3}}).
 \end{equation}

\end{lemma}

\begin{proof}
By Propositions 3.1 and 3.3 in \cite{Tzvetkov:2010p1443}, we have 
\[\|F_n\|_{L^p(\Omega)} \leq p^\frac{3}{2} \|F_n\|_{L^2(\Omega)},\] 

\noi
for all $2 \leq p <\infty$.
Then, \eqref{eq:hyper} follows from Lemma 4.5 in \cite{Tzvetkov:2010p1443}. 
\end{proof}

\section{Local Theory} \label{SEC:LWP}

\subsection{Basic Setup} \label{SUBSEC:LWP1}

Consider the Duhamel formulation \eqref{NLS3} of the Wick ordered NLS.
As mentioned before, when $\al \leq \frac{1}{2}$, 
the linear part $S(t)u_0^\omega \notin L^2(\T)$
almost surely.
Nonetheless,
we show that the nonlinear part lies in a smoother space $H^s (\mathbb{T})$ for some $s \geq 0$. 
More precisely, we prove that for each small $\dl> 0$, 
 there exists $\Omega_\dl$ with complemental measure $< e^{-\frac{1}{\dl^c}}$ 
such that $\G$ defined in \eqref{NLS3} is a contraction on $S(t) u_0^\omega + B$ for 
$\omega \in \Omega_\dl$, where $B$ denotes the ball of radius 1 in $X^{s, \frac{1}{2}+, \dl}$ for some $s \geq 0$.
i.e. we construct a contraction centered at the linear solution.

Given $u$ on $\T\times [-\dl, \dl]$, let $\wt{u}$ be an extension of $u$ onto $\T \times \R$.
By the nonhomogeneous linear estimate \cite{Bourgain:1993p453}, \cite{Ginibre:1997p1264},
we have
\begin{align}
	 	\bigg\| \int_0^t S(t - t') \mathcal{N}(u) (t') d t'\bigg\|_{X^{s, \frac{1}{2}+, \dl}}
	& \leq \bigg\|\eta_{_\dl}(t) \int_0^t S(t - t') \mathcal{N}(\wt{u}) (t') d t'\bigg\|_{X^{s, \frac{1}{2}+}} \notag \\
	& \lesssim \| \mathcal{N}(\wt{u}) \|_{X^{s, -\frac{1}{2}+}}, \label{duhamel}
\end{align}

\noi where $\eta_{_\dl}$ is a smooth cutoff on $[-2\dl, 2\dl]$. 
Then, Theorem \ref{THM:LWP} follows once we  prove 
\begin{equation}
	\label{trilinear1} \| \mathcal{N}(\wt{u}) \|_{X^{s, -\frac{1}{2}+}} \lesssim \dl^\theta, \quad \text{for some } \theta > 0 
\end{equation}

\noi for $\omega \in \Omega_\dl$ with $\mathbb{P}(\Omega^c_\dl) < e^{-\frac{1}{\dl^c}}$
(for {\it some} extension $\wt{u}$ of $u$.)
From the embedding $X^{s, \frac{1}{2}+, \dl} \subset C([-\dl, \dl]:H^s)$, 
it follows that \eqref{duhamel} and \eqref{trilinear1} imply that the nonlinear part of the solution $u^\omega$ is in 
 $C([-\dl, \dl]:H^s)$ with large probability.
Now, write $\mathcal{N}(u) $ as follows: 
\begin{align}
	\label{nonlinear1} \mathcal{N}(u) & = u |u|^2 - 2u \fint \ |u|^2 \notag \\
	& = \sum_{n_2 \ne n_1, n_3} \ft{u}(n_1)\cj{\ft{u}(n_2)}\ft{u}(n_3) e^{i(n_1 - n_2 + n_3)x} - \sum_n \ft{u}(n)|\ft{u}(n)|^2 e^{inx} =: \mathcal{N}_1(u) - \mathcal{N}_2(u). 
\end{align}

\noi 
Here, the condition $n_2 \ne n_1, n_3$ in the sum for $\mathcal{N}_1(u)$ 
is a shorthand notation for $n_2 \ne n_1$ and $n_2 \ne n_3$.
This shorthand notation is used in the remaining part of the paper.

In the following subsections, 
we will prove \eqref{trilinear1} by separately estimating the contributions from $\mathcal{N}_1(\wt{u}) $ and 
$\mathcal{N}_2(\wt{u}) $.
In particular, we choose an extension $\wt{u}$ in  $S(t) u_0^\omega +X^{s, \frac{1}{2}+}$
of 
$u \in S(t) u_0^\omega + B$. i.e. $u = S(t) u_0^\omega + v$ for some $v$ with 
$ \|v \|_{X^{s, \frac{1}{2}+, \dl}} \leq 1$.

By regarding $\mathcal{N}_1$ and $\mathcal{N}_2$
as trilinear operators, we write
\begin{align}
\label{NN1}
& \mathcal{N}_1(u_1, u_2, u_3) = \sum_{n_2 \ne n_1, n_3} \ft{u}_1(n_1, t)\cj{\ft{u}_2(n_2, t)}\ft{u}_3(n_3, t) e^{i(n_1-n_2+n_3)x}, \\
\label{NN2}
& \mathcal{N}_2(u_1, u_2, u_3) = \sum_n \ft{u}_1(n, t)\cj{\ft{u}_2(n, t)}\ft{u}_3(n, t) e^{inx}. 
\end{align}

\noi
Then, we prove \eqref{trilinear1}
by carrying out case-by-case analysis
on  \[\|\mathcal{N}_1(u_1, u_2, u_3)\|_{X^{s, -\frac{1}{2}+}}
\quad \text{ and }\quad
\|\mathcal{N}_2(u_1, u_2, u_3)\|_{X^{s, -\frac{1}{2}+}},\]

\noi
where $u_j$ is taken to be either of type
\begin{itemize}
\item[(I)] linear part: {\it random, less regular} 
	  \[\ u_j (x, t) = \sum_{n } \frac{g_n(\omega)}{\sqrt{1+|n|^{2\al}}} e^{i(nx + n^2t)}\]
\item[(II)]	 nonlinear part: {\it deterministic, smoother}
	 \[ \ u_j = \wt{v}_j \text{, where $\wt{v}_j$ is an extension of $v_j$ with } \|v_j \|_{X^{s, \frac{1}{2}+, \dl}} \leq 1. \]
\end{itemize}

\noi In the following, we may insert the smooth cutoff function $\eta_{_\dl}$ supported on $[-2\dl, 2\dl]$ 
(or the sharp cutoff function $\chi_{_\dl}$ supported on $[-\dl, \dl]$) without stating explicitly.
This merely corresponds to taking different extensions,
and does not cause any problem since our goal is to prove \eqref{trilinear1} for {\it some}
extension $\wt{u}$ of $u$.

Note that \eqref{duhamel} and \eqref{trilinear1} imply only the boundedness of the map $\G$ in \eqref{NLS3}
from $S(t) u_0^\omega + B$ into itself (for $\dl>0$ small). In establishing the contraction property, one needs to consider 
the difference $\G u_1 - \G u_2$ for $u_1, u_2 \in S(t) u_0^\omega + B$. We omit details since the computation  follows in a similar manner. Lastly, suppose that $u_0 = u_0^\omega$ is a good initial condition such that $\G$ is a contraction on $S(t) u_0 + B$. Let $\wt{u}_0$ be a function on $\mathbb{T}$ such that $\| u_0 - \wt{u}_0\|_{H^s} < \frac{1}{10} $. Denote by $\wt{\G}$ the solution map corresponding to the initial condition $\wt{u}_0$. Then, one can show that $\wt{\G}$ is also a contraction on $S(t) u_0 + B$ for $\dl$ sufficiently small. Moreover, we have
\[ \| u (t) - \wt{u}(t) \|_{H^s} \leq C \|u_0 - \wt{u}_0 \|_{H^s}\]

\noi for $|t| \leq \dl$, where $\wt{u}$ is the solution with the initial condition $\wt{u}_0$. For details, see \cite{Bourgain:1993p453}, \cite{Bourgain:1996p446}.

\subsection{Estimate on $\mathcal{N}_2$}  \label{SUBSEC:LWP2}

In this subsection, we prove the easier part of the estimate \eqref{trilinear1}: 
\begin{equation}
	\label{trilinear2} \| \mathcal{N}_2(u_1, u_2, u_3)\|_{X^{s, -\frac{1}{2}+}} \lesssim \dl^\theta 
\end{equation}

\noi for some $ \theta > 0$, 
outside an exceptional set of measure $e^{-\frac{1}{\dl^c}}$, where $\mathcal{N}_2$ is as in \eqref{NN2}
and $u_j$ is either of type (I) or (II).
We have 
\begin{equation}
	\text{LHS of } \eqref{trilinear2} = \bigg\| \frac{\jb{n}^s}{\jb{\tau - n^2}^{\frac{1}{2}-}} \intt_{\tau = \tau_1 - \tau_2 + \tau_3} \ft{u}_1(n, \tau_1)\cj{\ft{u}_2(n, \tau_2)}\ft{u}_3(n, \tau_3) d\tau_1 d\tau_2 \bigg\|_{l^2_n L^2_\tau} . 
\label{easy1} 
\end{equation}

\noi In the following,  we may replace $u_j$ by $\eta_{_\dl}u_j$ if necessary, 
where $\eta_{_\dl}$ the smooth cutoff function supported on $[-2\dl, 2\dl]$.

\medskip

\noi $\bullet$ {\bf Case (a):} $u_j$ of type (II), $j = 1, \dots, 3$.

By H\"older inequality with $p$ large ($\frac{1}{2} =\frac{1}{2+} + \frac{1}{p}$), we have 
\begin{align*}
	\eqref{easy1} \lesssim \sup_n \|\jb{\tau - n^2}^{-\frac{1}{2}+}\|_{L^{2+}_\tau} \Big\| \jb{n}^s\intt_{\tau = \tau_1 - \tau_2 + \tau_3} \ft{u}_1(n, \tau_1)\cj{\ft{u}_2(n, \tau_2)}\ft{u}_3(n, \tau_3) d\tau_1 d\tau_2 \Big\|_{l^2_{n} L^p_\tau}  
\end{align*}

\noi By Young and H\"older inequalities, 
\begin{align*}
	\lesssim \big\| \jb{n}^s \prod_{j = 1}^3 \| \ft{\wt{v}}_j (n, \tau) \|_{L^{\frac{3}{2}-}_\tau} \big\|_{l^2_n} \lesssim \big\| \jb{n}^s \prod_{j = 1}^3 \| \jb{\tau - n^2}^{\frac{1}{6}+} \ft{\wt{v}}_j (n, \tau) \|_{L^{2}_\tau} \big\|_{l^2_n} 
\end{align*}

\noi By  H\"older inequality and $l^2_n \subset l^6_n$, we have for $s \geq 0$ 
\begin{align*}
	\lesssim  \prod_{j = 1}^3 \| \jb{n}^\frac{s}{3} \jb{\tau - n^2}^{\frac{1}{6}+} \ft{\wt{v}}_j (n, \tau) \|_{l^6_n L^2_\tau} \leq  \prod_{j = 1}^3 \| \wt{v}_j \|_{X^{\frac{s}{3}, \frac{1}{6}+}},
\end{align*}

\noi
for any extension $\wt{v}_j$ of $v_j$ with  $\|v_j \|_{X^{s, \frac{1}{2}+, \dl}} \leq 1$.
Hence, by definition \eqref{Xsb2} and Lemma \ref{LEM:timedecay}, we have
\begin{align*}
	\eqref{easy1} \lesssim   
	\prod_{j = 1}^3 \|v_j \|_{X^{\frac{s}{3}, \frac{1}{6}+, \dl}}
	\lesssim \dl^{1-} \prod_{j = 1}^3 \| v_j \|_{X^{\frac{s}{3}, \frac{1}{2}+, \dl}} \leq \dl^{1-}. 
\end{align*}

\noi $\bullet$ {\bf Case (b):} $u_j$ of type (I), $j = 1, \dots, 3$.

By Lemma \ref{LEM:prob1}, we have $|g_n (\omega)| \leq C \dl^{-\frac{\beta}{2}} \jb{n}^\eps$ for $\omega$ outside an exceptional set of measure $< e^{-\frac{1}{\dl^c}}$. Then, by H\"older inequality with $p$ large ($\frac{1}{2} =\frac{1}{2+} + \frac{1}{p}$) and Young's inequality with \eqref{decay}, 
\begin{align*}
	\eqref{easy1} & \lesssim \sup_n \|\jb{\tau - n^2}^{-\frac{1}{2}+}\|_{L^{2+}_\tau}\\
	&\hphantom{XXX}\times \Big\| \jb{n}^{s-3\al} |g_n|^3 \intt_{\tau = \tau_1 - \tau_2 + \tau_3} \ft{\eta_{_\dl}}(\tau_1 - n^2)\cj{\ft{\eta_{_\dl}}(\tau_2 - n^2)}\ft{\eta_{_\dl}}(\tau_3 - n^2) d\tau_1 d\tau_2 \Big\|_{l^2_{n} L^p_\tau}\\
	& \lesssim \dl^{1-} \|\jb{n}^{s-3\al} |g_n (\omega)|^3 \|_{l^2_n} \lesssim \dl^{1 -\frac{3}{2}\beta-} \|\jb{n}^{s-3\al+ 3\eps} \|_{l^2_n} \lesssim \dl^{1-} 
\end{align*}

\noi as long as $2s - 6 \al + 6 \eps < -1$ or $ \al > \frac{1}{3}s + \frac{1}{6}$.

\noi $\bullet$ {\bf Case (c):} Exactly two $u_j$'s of type (I). Say $u_1(\I)$, $u_2(\I)$, and $u_3(\II)$. 

By H\"older inequality with $p$ large, Young's inequality with \eqref{decay}, and Lemma \ref{LEM:prob1}, we have 
\begin{align*}
	\eqref{easy1} & \lesssim \sup_n \|\jb{\tau - n^2}^{-\frac{1}{2}+}\|_{L^{2+}_\tau} \\
	& \hphantom{XX}\times \Big\| \jb{n}^{s-2\al} |g_n|^2 \intt_{\tau = \tau_1 - \tau_2 + \tau_3} \ft{\eta_{_\dl}}(\tau_1 - n^2)\cj{\ft{\eta_{_\dl}}(\tau_2 - n^2)}\ft{\wt{v}}_3(n, \tau_3) d\tau_1 d\tau_2 \Big\|_{l^2_{n} L^p_\tau}\\
	& \lesssim \dl^{\frac{1}{2}-} \big(\sup_n \jb{n}^{-2\al} |g_n|^2\big) \|\jb{n}^{s} \ft{\wt{v}}_3 (n, \tau) \|_{l^2_n L^2_\tau} 
	\lesssim \dl^{\frac{1}{2}-\beta-} \|\wt{v}_3 \|_{X^{s, 0}}, 
\end{align*}

\noi for $\al > 0$ outside an exceptional set of measure $< e^{-\frac{1}{\delta^c}}$,
where  $\wt{v}_3$ is any extension of $v_3$ with  $\|v_3 \|_{X^{s, \frac{1}{2}+, \dl}} \leq 1$.
Hence, by definition \eqref{Xsb2} and Lemma \ref{LEM:timedecay}, we have
\begin{align*}
\eqref{easy1} 
\lesssim \dl^{1 -\beta-} \|v_3 \|_{X^{s, \frac{1}{2}+, \dl}} \leq \dl^{1-} .
\end{align*}

\noi $\bullet$ {\bf Case (d):} Exactly one $u_j$ of type (I). Say $u_1(\I)$, $u_2(\II)$, and $u_3(\II)$.

By H\"older with $p$ large, Young's inequality with \eqref{decay}, and Lemma \ref{LEM:prob1}, we have 
\begin{align*}
	\eqref{easy1} & \lesssim \sup_n \|\jb{\tau - n^2}^{-\frac{1}{2}+}\|_{L^{2+}_\tau} \\
	& \hphantom{XX}\times \Big\| \jb{n}^{s-\al} |g_n| \intt_{\tau = \tau_1 - \tau_2 + \tau_3} \ft{\eta_{_\dl}}(\tau_1 - n^2)\cj{\ft{\wt{v}}_2(n, \tau_2)}\ft{\wt{v}}_3(n, \tau_3) d\tau_1 d\tau_2 \Big\|_{l^2_{n} L^p_\tau}\\
	& \lesssim \dl^{\frac{1}{2}-} \big(\sup_n \jb{n}^{-s-\al} |g_n|\big) \Big\| \prod_{j = 2}^3 \| \jb{n}^s \ft{\wt{v}}_j (n, \tau)\|_{L^\frac{4}{3}_\tau} \Big\|_{l^2_{n}}\\
	\intertext{By H\"older inequality in $n$ ($\frac{1}{2} = \frac{1}{4}+ \frac{1}{4}$) and in $\tau$ ($\frac{3}{4} = \frac{1}{2} + \frac{1}{4}$) with $l^2_n \subset l^4_n$,} 
	& \lesssim \dl^{\frac{1}{2}-\frac{\beta}{2} - } \prod_{j = 2}^3 \|\jb{n}^{s} \jb{\tau- n^2}^{\frac{1}{4}+} \ft{\wt{v}}_j (n, \tau) \|_{l^4_n L^2_\tau} 
	 \leq \dl^{\frac{1}{2}-\frac{\beta}{2} - } 
	\prod_{j = 2}^3 \|\wt{v}_j \|_{X^{s, \frac{1}{4}+}},
\end{align*}

\noi for $\al > -s$ outside an exceptional set of measure $< e^{-\frac{1}{\delta^c}}$,
where  $\wt{v}_j$, $j = 2, 3$ are any extensions of $v_j$ with  $\|v_j \|_{X^{s, \frac{1}{2}+, \dl}} \leq 1$.
Hence, by definition \eqref{Xsb2} and Lemma \ref{LEM:timedecay}, we have
\begin{align*}	
	\eqref{easy1} \lesssim \dl^{1-\frac{\beta}{2} - } 
	\prod_{j = 2}^3 \|v_j \|_{X^{s, \frac{1}{2}+, \dl}}
	 \lesssim \dl^{1-} .
\end{align*}

\begin{remark} \label{REM:local}\rm
In this subsection, we carefully used the definition \eqref{Xsb2} of the local-in-time space $X^{s, b, \dl}$.
Strictly speaking, such a care must be taken in all the subsequent nonlinear analysis.
However, this is a routine work and, for simplicity of presentation, we write estimates 
directly with 
$\|u_j\|_{X^{s, b, \dl}}$ in the following, 
meaning that the same estimates hold with $\|\wt{u}_j\|_{X^{s, b}}$ 
for any extension $\wt{u}_j$ of $u_j$
(and thus we can take the infimum over $\wt{u}_j$.)

\end{remark}

\subsection{Estimate on $\mathcal{N}_1 $: High Modulation Cases}  \label{SUBSEC:LWP3}

In the next two subsections, we prove the main part of the estimate \eqref{trilinear1}: 
\begin{equation}
	\label{trilinear3} \| \mathcal{N}_1(u_1, u_2, u_3) \|_{X^{s, -\frac{1}{2}+}} \lesssim \dl^\theta 
\end{equation}

\noi for some $ \theta > 0$, 
outside an exceptional set of measure $e^{-\frac{1}{\dl^c}}$, where $\mathcal{N}_1$ is as in \eqref{NN1}
and $u_j$ is either of type (I) or (II).

In (most of) the following,\footnote{Basically, we only need to dyadically decompose the function $u_j$ of type $(\I)$.} we assume that $u_1, u_2, u_3$
are dyadically decomposed with frequencies of size $N_1, N_2, N_3$, respectively.
As in \cite{Bourgain:1996p446}, 
let $N^1, N^2, N^3$ be the decreasing ordering of $N_1, N_2, N_3$ and $u^1, u^2, u^3$ be the corresponding $u_j$-factors. Also, let $\s^1, \s^2, \s^3$ denote the corresponding $\s_j := \jb{\tau_j - n_j^2}$. In the following, we use superscripts to imply that the functions (or variables) are arranged in the decreasing order of the spatial frequencies $N_1, N_2, N_3$.

In the rest of this subsection, we consider basic cases. 
Using duality, we can estimate \eqref{trilinear3} by 
\begin{equation}
	\label{duality1} \int_{-\dl}^\dl \int_\T (\jb{\dx}^s u^1) u^2 u^3 \cdot v \, dx dt 
\end{equation}

\noi where $\| v\|_{X^{0, \frac{1}{2}-, \dl}} \leq 1$ (with the complex conjugate on an appropriate $u^j$.)
Note that in \eqref{duality1}, we implicitly inserted the sharp cutoff function $\chi_{_\dl}$
(in one of the factors $u^j$.)

\medskip

\noi $\bullet$ {\bf Case (A):} $u^1$ and $u^2$ are of type $(\II)$.

Suppose that  $u^3$ is of type $(\II)$.
In this case, there is no need of apply dyadic on $u^1, u^2$, and $u^3$.
By Lemmata \ref{LEM:deterministic} (a) and  \ref{LEM:timedecay}, we have 
\begin{equation*}
	\eqref{duality1} 
	\lesssim \dl^{\frac{1}{2}-} \| u^1\|_{X^{s, \frac{1}{2}, \dl}} \| u^2\|_{X^{0, \frac{1}{2}, \dl}}\| u^3\|_{X^{0, \frac{1}{2}, \dl}} 
	\| v\|_{X^{0, \frac{1}{2}-, \dl}}
	\leq \dl^{\frac{1}{2}-} 
\end{equation*}

\noi as long as $s \geq 0$. 

Next, suppose that $u^3$ is of type $(\I)$ i.e. $u^3 = S(t) u_0$.
In this case, we do not need to apply dyadic decomposition on $u^1$ and $u^2$.
Namely, for a fixed dyadic block $N^3$ for $u^3$ of type $(\I)$, 
with a slight abuse of notation, 
we  use $u^1$ and $u^2$ to denote the sums of $u^j$ over the dyadic blocks $N^j \geq N^3$, $j = 1, 2$.

By Lemma \ref{LEM:deterministic} (b) with $p$ large followed by
Lemma \ref{LEM:prob2}, we have 
\begin{align*}
	\eqref{duality1} 
	& \lesssim  \| u^1\|_{X^{s, \frac{1}{4}+, \dl}} \| u^2\|_{X^{0, \frac{1}{4}+, \dl}} \| u^3\|_{L^p} 
	\| v\|_{X^{0,\frac{1}{4}+, \dl}} \\
	& \lesssim (N^3)^{\frac{1}{2}-\al+} \| u^1\|_{X^{s, \frac{1}{4}+, \dl}} \| u^2\|_{X^{0, \frac{1}{4}+, \dl}} \| v\|_{X^{0, \frac{1}{4}+, \dl}} 
\end{align*}

\noi outside an exceptional set of measure $< e^{-\frac{1}{\dl^c}}$. If $\jb{\tau_j - n_j^2}^{\frac{1}{4}-} \gtrsim (N^3)^{\frac{1}{2}-\al+}$ for $u_j$ of type $(\II)$, or if $\jb{\tau - n^2}^{\frac{1}{4}-} \gtrsim (N^3)^{\frac{1}{2}-\al+}$, then \eqref{trilinear3} follows with $\theta = \frac{1}{2}-$ in view of Lemma \ref{LEM:timedecay}.\footnote{This 
is to say that by inserting a cutoff (on the Fourier side)
on the region
satisfying $\jb{\tau_j - n_j^2}^{\frac{1}{4}-} \gtrsim (N^3)^{\frac{1}{2}-\al+}$ for $u_j$ of type $(\II)$, or $\jb{\tau - n^2}^{\frac{1}{4}-} \gtrsim (N^3)^{\frac{1}{2}-\al+}$, 
we can establish \eqref{trilinear3} with $\theta = \frac{1}{2}-$.}

Hence, it remains to estimate the contribution to \eqref{duality1}
with a cutoff (on the Fourier side)  
on the region
satisfying
\begin{equation}
	\label{case0} \jb{\tau - n^2} \ll (N^3)^{2-4\al+}, \text{ and } \jb{\tau_j - n_j^2} \ll (N^3)^{2-4\al+} \text{ if } u_j \text{ of type }(\II). 
\end{equation}

\medskip

\noi $\bullet$ {\bf Case (B):} $u^1$ of type $(\II)$, and $u^2$ of type $(\I)$.

In this case, we do not need to apply dyadic decomposition on $u^1$.
Namely, for a fixed dyadic block $N^2$ for $u^2$ of type $(\I)$, 
we use $u^1$ to denote the sum of $u^1$ over the dyadic blocks $N^1 \geq N^2$.

First, suppose that $u^3$ is of type $(\II)$. 
By Lemma \ref{LEM:deterministic} (b) with $p$ large followed by
Lemma \ref{LEM:prob2}, we have 
\begin{align*}
	\eqref{duality1} & \lesssim 
	\|u^1\|_{X^{s, \frac{1}{4}+, \dl}} \|u^2\|_{L^p} \|u^3\|_{X^{0, \frac{1}{4}+, \dl}}\|v\|_{X^{0, \frac{1}{4}+, \dl}} \\
	& \lesssim (N^2)^{\frac{1}{2}-\al+}\|u^1\|_{X^{s, \frac{1}{4}+, \dl}} \|u^3\|_{X^{0, \frac{1}{4}+, \dl}}
	\|v\|_{X^{0, \frac{1}{4}+, \dl}} 
\end{align*}

\noi outside an exceptional set of size $<e^{-\frac{1}{\dl^c}}$. If $\jb{\tau_j - n_j^2}^{\frac{1}{4}-} \gtrsim (N^2)^{\frac{1}{2}-\al+}$ for $u_j$ of type $(\II)$, or if $\jb{\tau - n^2}^{\frac{1}{4}-} \gtrsim (N^2)^{\frac{1}{2}-\al+}$, then \eqref{trilinear3} follows with $\theta = \frac{1}{2}-$ in view of Lemma \ref{LEM:timedecay}. 

Hence, it remains to estimate the contribution to \eqref{duality1}
from the region satisfying
\begin{equation}
	\label{caseA} \jb{\tau - n^2} \ll (N^2)^{2-4\al+}, \text{ and } \jb{\tau_j - n_j^2} \ll (N^2)^{2-4\al+} \text{ if } u_j \text{ of type }(\II) 
\end{equation}

\noi
in the following.

Next, suppose that $u^3$ is of type $(\I)$. 
By Lemma \ref{LEM:deterministic} (c) with $p$ large followed by
 Lemma \ref{LEM:prob2}, we have 
\begin{align*}
	\eqref{duality1} & \lesssim 
	\|u^1\|_{X^{s, 0+, \dl}} \|u^2\|_{L^p} \| u^3\|_{L^{p}}\|v\|_{X^{0, 0+, \dl}}\\
	& \lesssim (N^2)^{1-2\al+}\|u^1\|_{X^{s, 0+, \dl}}\|v\|_{X^{0, 0+, \dl}}. 
\end{align*}

\noi outside an exceptional set of measure $<e^{-\frac{1}{\dl^c}}$. If $(\s^1)^{\frac{1}{2}-} \gtrsim (N^2)^{1-2\al+}$ or if $\jb{\tau - n^2}^{\frac{1}{2}-} \gtrsim (N^2)^{1-2\al+}$, then \eqref{trilinear3} follows with $\theta = \frac{1}{2}-$ in view of Lemma \ref{LEM:timedecay}. 
Hence, it remains to estimate the contribution to \eqref{duality1}
from the region satisfying \eqref{caseA} as well.

\medskip

\noi $\bullet$ {\bf Case (C):} $u^1$ of type $(\I)$, and $u^2$, $u^3$ of type $(\II)$.

Suppose $\jb{\tau - n^2} \gg \max(\s^2, \s^3)$. 
By Lemma \ref{LEM:deterministic} (d) with $p$ large followed by
 Lemmata \ref{LEM:prob2} and \ref{LEM:timedecay}, we have 
\begin{align*}
	\eqref{duality1} & \leq (N^1)^s \|u^1\|_{L^p}
	\|u^2\|_{X^{0, \frac{3}{8}, \dl}}\|u^3\|_{X^{0, \frac{3}{8}, \dl}}\|v\|_{X^{0, 0+, \dl}} \\
	 & \lesssim (N^1)^{s+\frac{1}{2}-\al+}\|u^2\|_{X^{0, \frac{3}{8}, \dl}}\|u^3\|_{X^{0, \frac{3}{8}, \dl}}\|v\|_{X^{0, 0+, \dl}} \\
	&\lesssim \dl^{\frac{1}{4}-} (N^1)^{s+\frac{1}{2}-\al+}\|u^2\|_{X^{0, \frac{1}{2}, \dl}} \|u^3\|_{X^{0, \frac{1}{2}, \dl}}\|v\|_{X^{0, 0+, \dl}} 
\end{align*}

\noi outside an exceptional set of measure $<e^{-\frac{1}{\dl^c}}$. Hence, \eqref{trilinear3} follows as long as $\jb{\tau - n^2} \gtrsim (N^1)^{2s + 1 - 2\al+}$. Similar results hold if $\s^2 \gg \max(\s^3, \jb{\tau-n^2})$ 
or $\s^3 \gtrsim \max(\s^2, \jb{\tau-n^2})$. 

Hence, it remains to estimate the contribution to \eqref{duality1}
from the region satisfying
\begin{equation}
	\label{caseB} \jb{\tau - n^2} \ll (N^1)^{2s + 1 - 2\al+}, \text{ and } \jb{\tau_j - n_j^2} \ll (N^1)^{2s + 1 - 2\al+} \text{ if } u_j \text{ of type }(\II). 
\end{equation}

\medskip

\noi $\bullet$ {\bf Case (D):} $u^1$ of type $(\I)$, and either $u^2(\I)$, $u^3(\II)$ or $u^2(\II)$, $u^3(\I)$.

Suppose that $u^2$ is of type $(\I)$ and that $u^3$ is of type $(\II)$. Moreover, suppose $\jb{\tau - n^2} \gg \s^3$. 
By Lemma \ref{LEM:deterministic} (c) with $p$ large followed by
 Lemmata \ref{LEM:prob2} and \ref{LEM:timedecay}, we have 
\begin{align*}
	\eqref{duality1} & \leq (N^1)^s \|u^1\|_{L^p} \|u^2\|_{L^{p}}\|u^3\|_{X^{0, 0+, \dl}}\|v\|_{X^{0, 0, \dl}}\\
	& \lesssim (N^1)^{s+1-2\al+}\|u^3\|_{X^{0, 0+, \dl}}\|v\|_{X^{0, 0, \dl}} \\
	&\lesssim \dl^{\frac{1}{2}-} (N^1)^{s+1-2\al+}\|u^3\|_{X^{0, \frac{1}{2}, \dl}}\|v\|_{X^{0, 0, \dl}} 
\end{align*}

\noi outside an exceptional set of measure $<e^{-\frac{1}{\dl^c}}$. Hence, \eqref{trilinear3} follows as long as $\jb{\tau - n^2} \gtrsim (N^1)^{2s + 2 - 4\al+}$. Similar results hold if $\s^3 \gtrsim \jb{\tau-n^2}$, (or $u^2$ is of type $(\II)$ and $u^3$ is of type $(\I)$.) 

Hence, it remains to estimate the contribution to \eqref{duality1}
from the region satisfying
\begin{equation}
	\label{caseC} \jb{\tau - n^2} \ll (N^1)^{2s + 2 - 4\al+}, \text{ and } \jb{\tau_j - n_j^2} \ll (N^1)^{2s + 2 - 4\al+} \text{ if } u_j \text{ of type }(\II). 
\end{equation}

\medskip

\noi {\bf Summary:} Given a function $v(x, t)$, we can write $v$ as 
\begin{align}
	\label{v1} v (x, t)= \int \jb{\ld}^{-\frac{1}{2}-} \Big( \sum_n \jb{n}^{2s} \jb{\ld}^{1+} |\ft{v}(n, n^2 + \ld)|^2 \Big)^\frac{1}{2} \Big\{e^{i \ld t} \sum_n a_{\ld} (n) e^{i(nx + n^2 t)} \Big\} d \ld 
\end{align}

\noi where $a_{\ld} (n) = \frac{\ft{v}(n, n^2 + \ld)} {( \sum_m \jb{m}^{2s} |\ft{v}(m, m^2 + \ld)|^2 )^\frac{1}{2}}$. Note that $\sum_n \jb{n}^{2s} |a_{\ld}(n)|^2 = 1$. For $\|v\|_{X^{s, \frac{1}{2}+}}\leq 1$, we have 
\begin{equation}
	\label{v2} \int_{|\ld| < K } \jb{\ld}^{-\frac{1}{2}-} 
	\Big( \sum_n \jb{n}^{2s} \jb{\ld}^{1+} |\ft{v}(n, n^2 + \ld)|^2 \Big)^\frac{1}{2} d \ld \lesssim 1 
\end{equation}

\noi
by Cauchy-Schwarz inequality. See (22) and (23) in \cite{Bourgain:1996p446}. Note that \eqref{v1} is a standard representation for functions in $X^{s, b}$ for $b> \frac{1}{2}$. For example, see Klainerman-Selberg \cite{Klainerman:2002p743}. 
%We have a logarithmic loss in our case since $b = \frac{1}{2}$.

\medskip
\noi
$\bullet$ {\bf Case 1:}
First, we consider the case if any of $u_j$ is of type (II).
From Cases (A)--(D), we assume that 
$\jb{\tau - n^2} \ll K = K(N^j)$ for $j = 1, 2$, or $3$ in the following.
By H\"older inequality, we have
\begin{align*}
\|\N_1\|_{X^{s, -\frac{1}{2}+\eps}}
& = \bigg(\sum_n \int \jb{n}^{2s} \frac{|\ft{\N_1}(n, \tau)|^2}{\jb{\tau - n^2}^{1-2\eps}}d\tau \bigg)^\frac{1}{2}
= \bigg(\sum_n \int \jb{n}^{2s} \frac{|\ft{\N_1}(n, \ld + n^2)|^2}{\jb{\ld}^{1-2\eps}}d\ld \bigg)^\frac{1}{2}\\
& \lesssim K^\eps 
\big\|\jb{n}^s \ft{\N_1}(n, \ld + n^2) \big\|_{L^\infty_\ld l^2_n}.
\end{align*}

\noi Then, letting $* = \{ (n_1, n_2, n_3) \in\mathbb{Z}^3: n = n_1 - n_2 + n_3, \ n_2 \ne n_1, n_3\}$ 
and $**_n = \{ (\tau_1, \tau_2, \tau_3) \in \mathbb{R}^3: \tau = \ld + n^2 = \tau_1 - \tau_2 + \tau_3 \}$, we have 
\begin{align}
	 \text{LHS of } \eqref{trilinear3} 
	 & \lesssim K^{0+} \sup_{\jb{\ld}\ll K} \bigg\| \sum_{*} \jb{n^1}^s \int_{**_n}
	 \prod_{j = 1}^ 3 \ft{u}_j(n_j, \tau_j)d\tau_1 d \tau_2 \bigg\|_{l^2_n} ,
\label{reduction1}
\end{align}

\noi where $\ft{u}_j(n_j, \tau_j) = \frac{g_{n_j}(\omega)\dl (\tau_j - n_j^2)}{\sqrt{1 + |n_j|^{2\al}}}$ or
\[\ft{u}_j(n_j, \tau_j) = \int_{\{|\ld_j| < K \}} \jb{\ld_j}^{-\frac{1}{2}-} c_j(\ld_j) a_{\ld_j}(n_j) \dl (\tau_j - n_j^2 - \ld_j) d \ld_j \]

\noi
with $\sum_{n_j} \jb{n_j}^{2s} |a_{\ld_j}(n_j)|^2 \leq 1$ and  $c_j (\ld_j) = \Big(\sum_{m_j} \jb{m_j}^{2s} \jb{\ld_j}^{1+} |\ft{u}_j(m_j, n_j^2 + \ld_j)|^2 \Big)^\frac{1}{2}$ . 
For $j$ such that  $u_j$ is of type (II),  we can pull the integral in the corresponding $\ld_j$ outside the $l^2_n$-norm via Minkowski integral inequality. 
Then, for fixed $n$, $n_j$, $\ld$, and $\ld_j$, 
by integrating in $\tau_1$ and $\tau_2$, we obtain
\[ \int_{**_n} \prod_{j = 1}^3 \dl(\tau_j - n_j^2 - \wt{\ld}_j) d\tau_1d\tau_2
= \begin{cases}
1, & n^2 - n_1^2 + n_2^2 - n_3^2  + \ld - \wt{\ld}_1 + \wt{\ld}_2 - \wt{\ld}_3 = 0,\\
0, & \text{ otherwise,}
\end{cases}\]

\noi
where
$\wt{\ld}_j = 0$ or $\ld_j$, corresponding to type(I) or (II).

For example, consider the case when  $u_1$ and $u_2$ are of type (II) and $u_3$ is of type (I).
Then, from \eqref{reduction1}, we have\footnote{We  drop the complex conjugate in the following
when it plays no role.}  
\begin{align}
 \bigg\| \sum_{*} & \jb{n^1}^s \int_{**_n}
 	 \prod_{j = 1}^ 3 \ft{u}_j(n_j, \tau_j)d\tau_1 d \tau_2 \bigg\|_{l^2_n}\notag \\ 
		 & \lesssim  \int \prod_{j = 1}^2 \chi_{\{\jb{\ld_j} < K \}} \jb{\ld_j}^{-\frac{1}{2}-} c_j(\ld_j) \bigg\| \sum_{*} \jb{n^1}^s \prod_{k = 1}^ 2 a_{\ld_k}(n_k)\frac{g_{n_3}(\omega)}{\sqrt{1 + |n_3|^{2\al}}} \bigg\|_{l^2_n} d\ld_1d\ld_2 \notag \\
	& \lesssim  \sup_{\ld_1, \ld_2} \bigg\| \sum_{*} \jb{n^1}^s \prod_{j = 1}^2 a_{\ld_j}(n_j)\frac{g_{n_3}(\omega)}{\sqrt{1 + |n_3|^{2\al}}}\bigg\|_{l^2_n}, 
	\label{reduction2}
\end{align}

\noi where the last inequality follows from Cauchy-Schwarz inequality and \eqref{v2}. 
Note that if $u_j$ is supported on $[-\dl, \dl]$ in time,
then, in view of (the proof of) Lemma \ref{LEM:timedecay}, we can gain $\dl^\theta$ for small $\theta > 0$
in \eqref{reduction2}
by making the power in $\jb{\ld}^{-\frac{1}{2}-}$ 
slightly larger (keeping it less than $-\frac{1}{2}$)
%with $\jb{\ld}^{-\frac{1}{2}+}$
in \eqref{v2}.
%As a result, we lose $K^{0+}$ in \eqref{reduction2}.

\medskip
\noi
$\bullet$ {\bf Case 2:}
Next, we consider the case when all $u_j$'s are of type (I).
From \eqref{ALGEBRA}, we have 
\begin{equation}
|\tau - n^2| = |2(n_2-n_1)(n_2-n_3)| \lesssim (N^1)^2.
\label{sigmabound}
\end{equation}

\noi
By an argument similar to the proof of Lemma \ref{LEM:timedecay}, 
we have
\begin{align*}
\|\N_1\|_{X^{s, -\frac{1}{2}+\eps,\dl}}
& \lesssim \dl^\theta \|\N_1\|_{X^{s, -\frac{1}{2}+2\eps,\dl}}\\
& \leq \dl^\theta 
\bigg\|\jb{\tau- n^2}^{-\frac{1}{2}+2\eps} \sum_* \jb{n^1}^s 
\int_{**_n} \prod_{j = 1}^3 \frac{g_{n_j}\dl(\tau_j - n_j^2)}{\sqrt{1+|n_j|^{2\al}}}
d\tau_1d\tau_2\bigg\|_{l^2_n L^2_\tau}
\end{align*}

\noi
for small $\theta > 0$.
By integrating in $\tau_1$ and $\tau_2$, we have
\[ \int_{**_n} \prod_{j = 1}^3 \dl(\tau_j - n_j^2) d\tau_1d\tau_2
= \dl(\tau - n_1^2 + n_2^2 - n_3^2).\]

\noi
Hence, for fixed $n$, we have
\begin{align*}
 \bigg\|\jb{\tau- & n^2}^{-\frac{1}{2}+2\eps} \sum_* \jb{n^1}^s 
\int_{**_n} \prod_{j = 1}^3 \frac{g_{n_j}\dl(\tau_j - n_j^2)}{\sqrt{1+|n_j|^{2\al}}}
d\tau_1d\tau_2\bigg\|_{ L^2_\tau}\\
& = 
\bigg( \int \jb{\tau- n^2}^{-1+4\eps} 
\bigg|\sum_{\substack{n = n_1 - n_2 + n_3\\  n_2 \ne n_1, n_3}}
\jb{n^1}^s 
 \prod_{j = 1}^3 \frac{g_{n_j}}{\sqrt{1+|n_j|^{2\al}}}
\dl(\tau - n_1^2 + n_2^2 - n_3^2)\bigg|^2d\tau\bigg)^\frac{1}{2}\\
& = 
\bigg( \int_{|\mu| \lesssim (N^1)^2}
\jb{\mu}^{-1+4\eps} \bigg)^\frac{1}{2}
\sup_{|\mu| \lesssim (N^1)^2}
\bigg|\sum_{\substack{n = n_1 - n_2 + n_3\\  n_2 \ne n_1, n_3\\n^2 = n_1^2 + n_2^2 - n_3^2+ \mu}}
\jb{n^1}^s 
 \prod_{j = 1}^3 \frac{g_{n_j}}{\sqrt{1+|n_j|^{2\al}}}
\bigg|\\
& \lesssim
(N^1)^{s+4\eps}
\sup_{|\mu| \lesssim (N^1)^2}
\bigg|\sum_{\substack{n = n_1 - n_2 + n_3\\  n_2 \ne n_1, n_3\\n^2 = n_1^2 + n_2^2 - n_3^2+ \mu}}
 \prod_{j = 1}^3 \frac{g_{n_j}}{\sqrt{1+|n_j|^{2\al}}}
\bigg|.
\end{align*}

\noi
Then, we can take $l^2$-summation in $n$.

\medskip

Therefore, we can reduce the estimate \eqref{trilinear3} into the following two cases (with $\theta = 0+$):

\medskip

\noi $\bullet$ $u^1$ is of type $(\II)$:

In this case, we do not need to apply dyadic decomposition on $u^1$.
Namely, for a fixed dyadic block $N^2$ for $u^2$, 
with a slight abuse of notation, 
we use $u^1$ to denote the sum of $u^1$ over the dyadic blocks $N^1 \geq N^2$.

From \eqref{case0} and \eqref{caseA}, we can assume that $\s_j \ll (N^3)^{2-4\al+}$ or $(N^2)^{2-4\al+}$ for $u_j$ of type $(\II)$. Then, by \eqref{v1} and \eqref{v2}, we can bound \eqref{trilinear3} as follows: 
\begin{align}
	 \eqref{trilinear3} \lesssim \dl^{\theta} M(N^2, N^3) \bigg( \sum_n \Big| \sum_{\substack{ n = n_1 - n_2 + n_3 \\
	n_2 \ne n_1, n_3\\
	n^2 = n_1^2 - n_2^2 + n_3^2 + \mu}} a_1(n_1)\cj{a_2(n_2)}a_3(n_3) \Big|^2 \bigg)^\frac{1}{2}, 
\label{u2}
\end{align}

\noi where $\sum_{n} |a^1(n)|^2 \leq 1$, $a^j(n) = \frac{g_n(\omega)}{\sqrt{1 + |n|^{2\al}}}$ or $\sum_{|n| \sim N^j} |a^j(n)|^2 \leq (N^j)^{-2s}$ for $j = 2, 3$, and 
\begin{tabbing}
	\hspace{1cm} \= Case (A): \= $M(N^2, N^3) = ( N^3)^{0+}$ \= and \= $|\mu| \ll (N^3)^{2-4\al+}$ \\
	\> Case (B): \> $M(N^2, N^3) = ( N^2)^{0+}$ \> and \> $|\mu| \ll (N^2)^{2-4\al+}$. 
\end{tabbing}

%Note that we did not apply dyadic decomposition on $N^1$.

\medskip 

\noi $\bullet$ $u^1$ is of type $(\I)$:

From \eqref{caseB} and \eqref{caseC}, we can assume that $\s_j \ll (N^1)^{2s + 1 - 2 \al+}$ or $(N^1)^{2s + 2 - 4 \al+}$ for $u_j$ of type $(\II)$. Then, by \eqref{v1} and \eqref{v2}, we can bound \eqref{trilinear3} as follows: 
\begin{align}
	 \eqref{trilinear3} \lesssim \dl^{\theta} (N^1)^{s+} \bigg( \sum_{|n| \lesssim N^1} \Big| \sum_{\substack{ n = n_1 - n_2 + n_3 \\
	n_2 \ne n_1, n_3\\
	n^2 = n_1^2 - n_2^2 + n_3^2 + \mu}} a_1(n_1)\cj{a_2(n_2)}a_3(n_3) \Big|^2 \bigg)^\frac{1}{2}, 
\label{u1}
\end{align}

\noi where $a^1(n) = \frac{g_n(\omega)}{\sqrt{1 + |n|^{2\al}}}$, 
$a^j(n) = \frac{g_n(\omega)}{\sqrt{1 + |n|^{2\al}}}$ or $\sum_{|n| \sim N^j} |a^j(n)|^2 \leq (N^j)^{-2s}$ for $j = 2, 3$, and 
\begin{tabbing}
	\hspace{1cm} \= Case (C): \hspace{5mm} \= $|\mu| \ll (N^1)^{2s + 1 - 2 \al+}$ \\
	\> Case (D): \> $|\mu| \ll (N^1)^{2s + 2 - 4 \al+}$ \\
	\> All type (I): \> $|\mu| \ll (N^1)^2$, 
\end{tabbing}

\noi Note that all the spatial frequencies are dyadically decomposed in this case.

\medskip

Suppose $|n_2| > 10 (|n_1| + |n_3|)$. Then, on the one hand, $|\mu| \sim |(n_2 - n_1) (n_2 - n_3)| \sim |n_2|^2 \sim (N^1)^2$ by \eqref{ALGEBRA}. 
On the other hand, if $u^1 = u_2$ is of type $(\II)$, we have $|\mu| \ll (N^2)^{2- 4\al+} \ll (N^1)^2$ as long as $\al > 0$. If $u^1 = u_2$ is of type $(\I)$, we have $|\mu| \ll (N^1)^{2s + 2 - 4\al+} \ll (N^1)^2$ since $\al > \frac{s}{2}$. In both cases, we would have a contradiction. Hence, we can assume that $|n_1| \sim N^1$ or $|n_3| \sim N^1$. Moreover, by symmetry between $u_1$ and $u_3$, we assume $|n_1| \sim N^1$ in the following.

Lastly, we list all the different cases following \cite{Bourgain:1996p446}. We consider these cases in details in the next subsection.
\begin{tabbing}
	\hspace{1cm} \=Case (a): \= $n_1 = N^1(\II)$, \= $n_2 = N^2(\I)$, \= $n_3 = N^3(\II)$ or \= $n_2 = N^3(\I)$, \= $n_3 = N^2(\II)$ \\
	
	\>Case (b): \>$n_1 = N^1(\II)$, \>$n_2 = N^3(\II)$, \> $n_3 = N^2(\I)$ or \>$n_2 = N^2(\II)$, \> $n_3 = N^3(\I)$ \\
	
	\>Case (c): \> $n_1 = N^1(\I)$, \>$n_2 = N^2(\II)$, \>$n_3 = N^3(\II)$ \\
	
	\>Case (d): \>$n_1 = N^1(\I)$, \>$n_2 = N^3(\II)$, \>$n_3 = N^2(\II)$\\
	
	\>Case (e): \>$n_1 = N^1(\II)$, \>$n_2 = N^2(\I)$, \>$n_3 = N^3(\I)$\\
	
	\>Case (f): \>$n_1 = N^1(\II)$, \>$n_2 = N^3(\I)$, \>$n_3 = N^2(\I)$\\
	
	\>Case (g): \>$n_1 = N^1(\I)$, \>$n_2 = N^2(\I)$, \>$n_3 = N^3(\II)$\\
	
	\>Case (h): \>$n_1 = N^1(\I)$, \>$n_2 = N^3(\I)$, \>$n_3 = N^2(\II)$\\
	
	\>Case (i): \>$n_1 = N^1(\I)$, \>$n_2 = N^2(\II)$, \>$n_3 = N^3(\I)$\\
	
	\>Case (j): \>$n_1 = N^1(\I)$, \>$n_2 = N^3(\II)$, \>$n_3 = N^2(\I)$\\
	
	\>Case (k): \>$n_1 = N^1(\I)$, \>$n_2 = N^2(\I)$, \>$n_3 = N^3(\I)$\\
	
	\>Case (l): \>$n_1 = N^1(\I)$, \>$n_2 = N^3(\I)$, \>$n_3 = N^2(\I)$\\
\end{tabbing}

\subsection{Estimate on $\mathcal{N}_1$: Low Modulation Cases}  \label{SUBSEC:LWP4}

For notational simplicity, we use $|n|^\al $ for $\sqrt{1+ |n|^{2\al}}$. 
We may drop a complex conjugate on $u_2$ when it plays no significant role.
Now, let 
\begin{align*} 
	A_n = \{ (n_1, n_2, n_3) \in \mathbb{Z}^3: n = & \ n_1 - n_2 + n_3, |n_j| \sim N_j , j = 2, 3, \\
	& n_2 \ne n_1, n_3, \text{ and } n^2 = n_1^2 - n_2^2 + n_3^2 + \mu\} 
\end{align*}

\noi and $ B_n = A_n \cap \{ |n_1| \sim N_1 \}. $ Also, from \eqref{ALGEBRA}, we have 
\begin{equation}
	\label{ALGEBRA7} \mu = 2 (n_2 - n_1) (n_2 - n_3) = 2 (n - n_1) (n - n_3). 
\end{equation}

\medskip

\noi $\bullet$ {\bf Cases (k), (l):} $u_1, u_2, u_3$ of type $(\I)$. 
\quad
In this case, we have 
\begin{align}
	\label{casek} \eqref{u1} \lesssim \dl^{\theta} N_1^{s+} \bigg( \sum_{|n| \lesssim N_1} \Big|\sum_{B_n} \frac{g_{n_1}}{|n_1|^\al} \frac{\cj{g_{n_2}}}{|n_2|^\al} \frac{g_{n_3}}{|n_3|^\al} \Big|^2 \bigg)^\frac{1}{2}. 
\end{align}

First, we consider the contribution from $n_1 \ne n_3$. Let 
\begin{align*}
	F_n (\omega):= \sum_{C_n} \frac{g_{n_1}(\omega)}{|n_1|^\al} \frac{\cj{g_{n_2}}(\omega)}{|n_2|^\al} \frac{g_{n_3}(\omega)}{|n_3|^\al}, 
\end{align*}

\noi where $ C_n = B_n \cap \{ n_1 \ne n_3\}. $ Then, 
applying Lemma \ref{LEM:hyper} for
with  $\ld = \dl^{-\frac{3}{2}\beta} N_1^{\frac{3}{2}\eps} \| F_n\|_{L^2(\Omega)}$ with $\eps = 0+$, we have 
\begin{align}
	\label{chaosestimate} \mathbb{P} (|F_n(\omega)| \geq \dl^{-\frac{3}{2}\beta} N_1^{\frac{3}{2}\eps}\| F_n\|_{L^2(\Omega)}) \leq e^{-\frac{c'N_1^{\eps}}{ \dl^\beta}}. 
\end{align}

\noi By Lemma \ref{LEM:count1}, we have 
\begin{align*}
	\text{RHS of } \eqref{casek} & \lesssim \dl^{\theta-\frac{3}{2}\beta} N_1^{s+\frac{3}{2}\eps+} \bigg( \sum_{|n| \lesssim N_1} \sum_{C_n} \frac{1}{|n_1|^{2\al}|n_2|^{2\al}|n_3|^{2\al}} \bigg)^\frac{1}{2}\\
	& \lesssim \dl^{\theta-\frac{3}{2}\beta} N_1^{s -\al +\frac{3}{2}\eps+} (N^2)^{-\al} (N^3)^{-\al+\frac{1}{2}} \\
	& \lesssim 
	\begin{cases}
		\dl^{\theta-\frac{3}{2}\beta} N_1^{s -3\al + \frac{1}{2} +\frac{3}{2}\eps+} & \text{ for } \al \leq \frac{1}{4} \\
		\dl^{\theta-\frac{3}{2}\beta} N_1^{s -\al +\frac{3}{2}\eps+} & \text{ for }\al \geq \frac{1}{4} 
	\end{cases}
	\\
	& \leq \dl^{\theta-\frac{3}{2}\beta} \prod_{j = 1}^3 N_j^{0-}, \ \left\{
	\begin{array}{ll}
		\text{for } \al > \frac{s}{3} + \frac{1}{6} & \text{ (with } \al \leq \frac{1}{4}) \\
		\text{for } \al > s & \text{ (with }\al \geq \frac{1}{4}) 
	\end{array}
	\right. \notag 
\end{align*}

\noi outside an exceptional set of measure
\begin{equation}
\label{EXCEPT1}
< \sum_{|n| \lesssim N_1} e^{-\frac{c'N_1^{\eps}}{\dl^\beta}} \lesssim N_1 e^{-\frac{c'N_1^{\eps}}{\dl^\beta}} \leq N_1^{0-} e^{-\frac{c'}{\dl^\beta} N_1^{\eps} +(1+) \log N_1} < N_1^{0-} e^{-\frac{1}{\dl^{c}}}.
\end{equation}

\noi Note that in this case we need to make sure that the measures of these exceptional sets corresponding to different dyadic blocks are indeed summable and bounded by $e^{-\frac{1}{\dl^c}}$. We may not be explicit about this point in other cases. e.g. Cases (A)--(D) in Subsection \ref{SUBSEC:LWP3}. We do not encounter this issue in using Lemma \ref{LEM:prob1} since it gives one exceptional set of measure $<e^{-\frac{1}{\dl^c}}$ for all the frequencies.

Now, consider the contribution from $n_1 = n_3$. It follows from \eqref{ALGEBRA7} that there is at most one choice of $(n_1, n_2, n_3)$ for each fixed $n$. Thus, $\sum_{|n| \lesssim N_1} \big|\sum_{B_n, \, n_1 = n_3} 1\big|^2 = \sum_{|n| \lesssim N_1} \sum_{B_n, \, n_1 = n_3} 1$. Hence, by Lemmata \ref{LEM:count1} and \ref{LEM:prob1}, we have 
\begin{align}
	\text{RHS of } \eqref{casek} & \lesssim \dl^{\theta-\frac{3}{2}\beta} N_1^{s -2\al + 2\eps+} N_2^{-\al + \frac{1}{2} + \eps} \leq \dl^{\theta-\frac{3}{2}\beta}\prod_{j = 1}^3 N_j^{0-} 
	\label{ZK}
\end{align}

\noi  for $\al > \frac{s}{3} + \frac{1}{6}$ outside an exceptional set of measure $< e^{-\frac{1}{\dl^c}}$.

\medskip

\noi $\bullet$ {\bf Case (a):} (Case (b) can be treated in a similar way by replacing $n_2$ and $n_3$.)

In this case, we have $\mu = 2 (n_2 - n_1) (n_2 - n_3) = o( (N_2)^{2-4\al+})$. 
This implies that $|n|, |n_1|, |n_3| \lesssim N_2^q$ for some $q > 0$ since $n_2 \ne n_1, n_3$. 
Now, fix $n$.
Then, it follows from \eqref{DIVISOR}
that 
\begin{equation}\label{A1}
\sum_{A_n} 1 \lesssim N_2^\eps.
\end{equation}

\noi
Then, by Lemma \ref{LEM:prob1} and Cauchy-Schwarz inequality, we have 
\begin{align}
	\label{caseaa} \eqref{u2} & 
	\lesssim \dl^{\theta-\frac{\beta}{2}} (N_2)^{-\al + \frac{1}{2}\eps +} 
	\bigg( \sum_n  \Big(\sum_{A_n}|a_1(n_1)|^2|a_3(n_3)|^2\Big)
	 \Big(\sum_{A_n}1\Big)\bigg)^\frac{1}{2} \notag \\
	\intertext{By  \eqref{A1}, we have } 
	& \lesssim \dl^{\theta-\frac{\beta}{2}} N_2^{-\al+\eps+} 
	\Big( \sum_n  \sum_{A_n}|a_1(n_1)|^2|a_3(n_3)|^2
	 \Big)^\frac{1}{2} \notag \\
		& \lesssim \dl^{\theta-\frac{\beta}{2}} N_2^{-\al+\eps+} N_3^{-s}\leq \dl^{\theta'} N_2^{0-} N_3^{0-} 
\end{align}

\noi for $\al > 0$ and $s\geq 0 $ outside an exceptional set of measure $< e^{-\frac{1}{\dl^c}}$.
Note that $|n_3| \lesssim N_2^q$ is crucial in the last inequality of \eqref{caseaa} when $s = 0$, $n_2 = N^3$, and $n_3 = N^2$. 

In Case (b), we have $\mu = 2 (n_2 - n_1) (n_2 - n_3) = o( (N_3)^{2-4\al+})$, which implies that $|n|, |n_1|, |n_2| \lesssim N_3^q$ for some $q > 0$ since $n_2 \ne n_1, n_3$. The rest of the argument follows as above by replacing $n_2$ and $n_3$.

\medskip

\noi $\bullet$ {\bf Case (c):} (Case (d) can be treated in a similar way by replacing $n_2$ and $n_3$.)

Let $b_2(n_2) = |n_2|^s a_2(n_2)$. Then, we have $\sum_{|n_2|\sim N_2} |b_2(n_2)|^2 \lesssim 1$. By Lemma \ref{LEM:prob1} and Cauchy-Schwarz inequality on $n_3$ in the inner sum, 
\begin{align*}
	\eqref{u1} \lesssim \dl^{\theta-\frac{\beta}{2}} N_1^{s -\al +\eps+} N_2^{-s} N_3^{-s} \Big( \sum_{|n| \lesssim N^1} \sum_{B_n} |b_2(n_2)|^2 \Big)^\frac{1}{2} 
\end{align*}

\noi outside an exceptional set of measure $< e^{-\frac{1}{\dl^c}}$. 
For fixed $n_2$, it follows from (the proof of) Lemma \ref{LEM:count1} that there are at most $N_1^{0+}$ terms in the sum. 
Hence, we have 
\begin{align*}
	\eqref{u1} \lesssim \dl^{\theta-\frac{\beta}{2}} N_1^{s -\al +\eps+} 
	N_2^{-s} N_3^{-s} \leq \dl^{\theta'}N_1^{0-} N_2^{0-} N_3^{0-} 
\end{align*}

\noi for $\al > s \geq 0$.

\medskip

\noi $\bullet$ {\bf Case (e):} (Case (f) is basically the same.)

In this case, we have $ |\mu| = |2 (n_2 - n_1) (n_2 - n_3)| \ll N_2^{2-4\al+} $. 
Fix $n$.
Then, from \eqref{DIVISOR}, 
there are at most $d(\mu)  = O(N_2^{0+})$ many choices for $n_2$ and $n_3$.
Then, by Lemma \ref{LEM:prob1}, Cauchy-Schwarz inequality, and \eqref{A1}, we have
\begin{align*}
	\eqref{u2} 
& \lesssim \dl^{\theta-\beta} 
	N_2^{-\al+\eps+} N_3^{-\al} \bigg( \sum_{|n| \lesssim N_2^q} 
	\Big(\sum_{A_n} |a_1(n_1)|^2\Big) \Big(\sum_{A_n} 1\Big) \bigg)^\frac{1}{2} \\
& \lesssim \dl^{\theta-\beta} 
	N_2^{-\al+\frac{3}{2}\eps+} N_3^{-\al} \Big( \sum_{|n| \lesssim N_2^q} \sum_{A_n} |a_1(n_1)|^2 \Big)^\frac{1}{2} \\
& \lesssim \dl^{\theta-\beta} N_2^{-\al+2\eps+} N_3^{ -\al} \leq \dl^{\theta'}N_2^{0-} N_3^{0-} 
\end{align*}

\noi for $ \al > 0$ outside an exceptional set of measure $<e^{-\frac{1}{\dl^c}}$
(with some $q>0$ as in Case (a).)
In the above computation, we used 
\[ \sum_{|n| \lesssim N_2^q} \sum_{A_n} |a_1(n_1)|^2
\lesssim N_2^\eps \sum_{n_1} |a_1(n_1)|^2 \leq N_2^\eps\]

\noi
by first summing over $n_2$ (for fixed $n_1$) and then over $n_1$.

\medskip

\noi $\bullet$ {\bf Case (g):} (Cases (h), (i), (j)  are basically the same.)

Fix $n$.
Then, from \eqref{DIVISOR}, 
there are at most $d(\mu)  = O(N_1^{0+})$ many choices for $n_2$ and $n_3$.
Thus, we have $\sum_{A_n} 1 \lesssim N_1^\eps.$
Then, by Lemma \ref{LEM:prob1} and Cauchy-Schwarz inequality as before,  we have
\begin{align*}
	\eqref{u1} 
& \lesssim \dl^{\theta-\beta} 
	N_1^{s-\al+\eps+} N_2^{-\al+ \frac{1}{2}\eps} \Big( \sum_{|n| \lesssim N_1} 
	\sum_{A_n} |a_3(n_3)|^2\Big)^\frac{1}{2} \\
& \lesssim \dl^{\theta-\beta} 
	N_1^{s-\al+\frac{3}{2}\eps+} N_2^{-\al+ \frac{1}{2}\eps} N_3^{-s}
 \leq \dl^{\theta'}\prod_{j = 1}^3 N_j^{0-} 
\end{align*}

\noi for $ \al > s\geq 0$ outside an exceptional set of measure $<e^{-\frac{1}{\dl^c}}$.

\medskip

This completes the proof of Theorem \ref{THM:LWP}.

\section{Global Theory}  \label{SEC:GWP}

In this section, we prove almost sure global well-posedness of \eqref{NLS2}.

\subsection{Reduction of Theorem \ref{THM:GWP1} to Proposition \ref{PROP:HNLS} }  \label{SUBSEC:GWP1}

In this subsection, we first prove Theorem \ref{THM:GWP1}, assuming the following proposition.
Heuristically speaking, 
this  says 
``almost'' almost sure global well-posedness (Proposition \ref{THM:GWP2})
implies almost sure global well-posedness (Theorem \ref{THM:GWP1}.)

\begin{proposition}
	\label{THM:GWP2} Let $\al \in ( \frac{5}{12}, \frac{1}{2}]$. Given $T> 0$ and $\eps >0$, there exists $\Omega_{T, \eps} \in \mathcal{F}$ with the following properties:
	\begin{enumerate}
		\item[(i)] $\mathbb{P}(\Omega_{T, \eps}^c) = \rho_\al \circ u_0(\Omega_{T, \eps}^c) < \eps$, 
				where $\rho_\al$ is the Gaussian probability measure on $H^{\al-\frac{1}{2}-}(\T)$ defined in \eqref{Gaussian1}
		and $u_0$ is viewed as a map $u_0:\Omega \to H^{\al-\frac{1}{2}-}(\mathbb{T})$.
%		
%		where $u_0:\Omega \to H^{\al-\frac{1}{2}-}(\mathbb{T})$.
		
		\item[(ii)] For each $\omega \in \Omega_{T, \eps}$ there exists a (unique) solution $u$ of \eqref{NLS2} in
		\[e^{-i \dx^2 t}u_0 + C([-T, T];L^2(\mathbb{T})) \subset C([-T, T];H^{\al - \frac{1}{2}-}(\mathbb{T}))\]
		with the initial condition $u_0^\omega$ given by \eqref{IV}. 
			Here, the uniqueness holds in a very mild sense. See Remark \ref{REM:unique}.
	\end{enumerate}
\end{proposition}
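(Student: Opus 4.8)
The plan is to implement Bourgain's high--low method iteratively, the single-step ingredient being the difference-equation result stated below as Proposition~\ref{PROP:HNLS}. Concretely, I would use it in the following form: given an external function $U$ with $\|U\|_{Z^{0,\frac12,\dl}}\le CR$ on a time interval of length $\dl$, and data $\psi$ with Fourier support in $\{|n|>N\}$ and Gaussian-randomized coefficients with $\|\psi\|_{H^s}\le K$, there is a set of measure $<e^{-\frac{1}{\dl^c}}$ --- depending on the realization of $\psi$ but \emph{uniform over all such $U$} --- outside of which the difference equation with data $\psi$ and external function $U$ has a solution $v=S(t)\psi+w$ with $w\in C([0,\dl];L^2(\T))$ and a quantitative smoothing gain $\|w(\dl)\|_{L^2}\lesssim \dl^{\theta}N^{-\theta'}(1+R)^3$ for some $\theta,\theta'>0$, provided $\al>\frac{5}{12}$. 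Granting this, the proof of Proposition~\ref{THM:GWP2} is an exhaustion-plus-iteration argument.

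Fix $T>0$ and $\eps>0$. By the large deviation estimate \eqref{largedevi} choose $K=K(\eps)$ with $e^{-cK^2}<\frac{\eps}{2}$, so $\mathbb{P}(\Omega_K^c)<\frac{\eps}{2}$ where $\Omega_K=\{\|u_0^\omega\|_{H^s}\le K\}$ and $s=\al-\frac12-$. Split $u_0^\omega=\phi_0+\psi_0$ with $\phi_0=\mathbb{P}_{\le N}u_0^\omega$, so $\|\phi_0\|_{L^2}\le N^{-s}K=:R$ on $\Omega_K$, where the dyadic threshold $N=N(T,\eps)$ is fixed at the end. Let $\dl=\dl(R)\sim R^{-\gamma}$ be the $L^2$ local existence time of \cite{Bourgain:1993p453} and $M=\lceil T/\dl\rceil$. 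On $I_j=[(j-1)\dl,j\dl]$, let $u^j$ solve \eqref{NLS2} with data $\phi_{j-1}$; by the $L^2$ theory it is global, conserves $\|u^j(t)\|_{L^2}=\|\phi_{j-1}\|_{L^2}$, and satisfies $\|u^j\|_{Z^{0,\frac12}(I_j)}\lesssim R$ as long as $\|\phi_{j-1}\|_{L^2}\lesssim R$. Since $\{e^{i(j-1)\dl n^2}g_n\}_n$ is again a family of standard complex Gaussians, $\psi_{j-1}:=S((j-1)\dl)\psi_0=\sum_{|n|>N}\frac{e^{i(j-1)\dl n^2}g_n(\omega)}{\sqrt{1+|n|^{2\al}}}e^{inx}$ has Gaussian-randomized coefficients on $\{|n|>N\}$ with $\|\psi_{j-1}\|_{H^s}=\|\psi_0\|_{H^s}\le K$; applying Proposition~\ref{PROP:HNLS} with external function $u^j$, outside a set $E_j$ with $\mathbb{P}(E_j)<e^{-\frac{1}{\dl^c}}$ the difference equation on $I_j$ has a solution $v^j=S(t-(j-1)\dl)\psi_{j-1}+w^j$ with $\|w^j(j\dl)\|_{L^2}\lesssim \dl^{\theta}N^{-\theta'}(1+R)^3$, and $u=u^j+v^j$ on $I_j$. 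Redistribute at $t=j\dl$: $\phi_j:=u^j(j\dl)+w^j(j\dl)\in L^2$, $\psi_j:=S(j\dl)\psi_0$, and iterate.

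The decisive point is the bookkeeping. Telescoping, $\|\phi_j\|_{L^2}\le R+\sum_{i=1}^j\|w^i(i\dl)\|_{L^2}\le R+M\,C\dl^{\theta}N^{-\theta'}(1+R)^3$; using $M\sim T\dl^{-1}\sim TR^{\gamma}$, $R=N^{-s}K$, and $-s>0$, this is $\lesssim R+TK^{c_0}N^{-\theta'-sc_1}$ for fixed exponents $c_0,c_1>0$ depending on $\gamma,\theta$. Requiring this to be $\le 2R$ for all $0\le j\le M$ --- so that the hypothesis $\|\phi_{j-1}\|_{L^2}\lesssim R$ propagates, $\dl$ works at every step, and the solution reaches $[0,T]$ --- amounts to an inequality among $s,\theta,\theta',\gamma$ which, after optimizing the estimates in Proposition~\ref{PROP:HNLS}, is exactly the range $\al>\frac{5}{12}$; this is why the global theory needs a smaller range than the $\al>\frac16$ of Theorem~\ref{THM:LWP}. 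In this range, pick $N=N(T,\eps)$ large enough to make the bound $\le 2R$, run the iteration forward on $[0,T]$ and (by the same argument backward) on $[-T,0]$, and set $\Omega_{T,\eps}:=\Omega_K\setminus\bigcup_{j=1}^M E_j$. Since $\dl^{-1}\sim R^{\gamma}$ is a large power of $N$ while $M\sim T\dl^{-1}$ is only polynomial in $N$, the weight $e^{-\frac{1}{\dl^c}}$ dominates and $\mathbb{P}\big(\bigcup_{j=1}^M E_j\big)\le Me^{-\frac{1}{\dl^c}}<\frac{\eps}{2}$ for $N$ large, giving $\mathbb{P}(\Omega_{T,\eps}^c)<\eps$, i.e. (i). For (ii), patching the intervals gives a solution with $u-e^{-i\partial_x^2 t}u_0^\omega\in C([-T,T];L^2(\T))$; uniqueness holds on each $I_j$ from the $L^2$ theory for $u^j$ and from the uniqueness statement of Proposition~\ref{PROP:HNLS} (in a small $Z^{0,\frac12}$-ball around $S(t)\psi_{j-1}$) for $v^j$.

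The main obstacle, apart from Proposition~\ref{PROP:HNLS} itself, is precisely this exponent bookkeeping: the per-step smoothing gain $N^{-\theta'}$ must beat the growth of the step count $M\sim R^{\gamma}=N^{-s\gamma}K^{\gamma}$ and the cubic amplification $(1+R)^3$ coming from the large external-function norm $R=N^{-s}K$, and making this balance work is what pins down $\al>\frac{5}{12}$. A secondary subtlety is that $u^j$ is itself $\omega$-dependent (through $\phi_{j-1}$, hence through the high modes of $u_0^\omega$ that enter $w^1,\dots,w^{j-1}$), so it is not independent of the randomization in $\psi_{j-1}$; this causes no loss because the good set in Proposition~\ref{PROP:HNLS} is taken uniform over all external functions in $\{\|U\|_{Z^{0,\frac12,\dl}}\le CR\}$, the randomness entering the trilinear estimates only through the linear factors $S(t)\psi_{j-1}$.
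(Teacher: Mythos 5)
Your proposal is correct and takes essentially the same route as the paper: Proposition \ref{THM:GWP2} is proved there by exactly this high--low iteration, invoking Proposition \ref{PROP:HNLS} (whose exceptional sets come only from the randomness in $\psi_{j-1}$, uniformly over admissible external functions, as you note) for the per-interval smoothing, choosing $K\sim(\log\tfrac1\eps)^{1/2}$ and $\dl\sim N^{4s-}K^{-4-}$, propagating the bound $\|\phi_j\|_{L^2}\lesssim N^{-s}K$, and summing the $[T/\dl]$ exceptional sets against $e^{-1/\dl^c}$. The only cosmetic difference is that you carry a generic per-step bound and telescope it yourself, whereas the paper builds the summed bound \eqref{wjbound} into the statement of Proposition \ref{PROP:HNLS}.
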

\begin{proof}
	[Proof of Theorem \ref{THM:GWP1}] For fixed $\eps > 0$, let $T_j = 2^j$ and $\eps_j = 2^{-j} \eps$. Apply Proposition \ref{THM:GWP2} and construct $\Omega_{T_j, \eps_j}$. Then, let $\Omega_\eps = \bigcap_{j = 1}^\infty \Omega_{T_j, \eps_j}$. Note that \eqref{NLS2} is globally well-posed on $\Omega_\eps$ with $\mathbb{P} (\Omega_\eps^c) < \eps$. Now, let $\wt{\Omega} = \bigcup_{\eps > 0} \Omega_\eps$. Then, \eqref{NLS2} is globally well-posed on $\wt{\Omega}$ and $\mathbb{P} (\wt{\Omega}^c) = 0$. 
\end{proof}

Now, we present the proof of Proposition \ref{THM:GWP2}. 
\begin{proof}
	[Proof of Proposition \ref{THM:GWP2}]
	
	First, recall the following argument which relates the time of local existence $\dl$ and the size of the initial condition. Consider \eqref{NLS2}. 
We briefly review the deterministic $L^2$-local theory. 
For $t \in [-\dl, \dl]$, \eqref{NLS3} is equivalent to 	
\[ u(t) = S(t) u_0 \pm i \int_0^t S(t - t') \mathcal{N}(\chi_\dl(t') u) (t') d t'.\] 

\noi
By \eqref{duhamel},  we have 
	\begin{align*}
		 \| u \|_{X^{0, \frac{1}{2}+\eps_1, \dl}} 
		& \leq C_0\|u_0\|_{L^2} + C_1\|  \mathcal{N}(\chi_{_\dl} \wt{u})\|_{X^{0, -\frac{1}{2}+\eps_1}} \\
\intertext{for any extension $\wt{u}$ of $u$.
By duality (against $v$ in $X^{0, \frac{1}{2}-\eps_1}$)
with Lemma \ref{LEM:deterministic} (a) followed by Lemma \ref{LEM:timedecay},}		
		& \leq C_0\|u_0\|_{L^2} + C_2\dl^{\frac{1}{2}-\eps_1-\eps_2}\|\wt{u}\|^3_{X^{0, \frac{1}{2}+\eps_1}} 
	\end{align*}
	
	\noi 
	for some small $\eps_2>0$. Hence, we obtain
		\begin{align}
	\label{local} 	 \| u \|_{X^{0, \frac{1}{2}+, \dl}} 
		& \leq C_0\|u_0\|_{L^2} + C_2\dl^{\frac{1}{2}-\eps_1-\eps_2}\|u\|^3_{X^{0, \frac{1}{2}+\eps_1, \dl}}.
	\end{align}
	
		\noi
	Note that the ``loss'' $\eps_1$ comes from the fact that 
	$b = \frac{1}{2}+\eps_1$ is greater than $ \frac{1}{2}$
	and $\eps_2$ comes from Lemma 	\ref{LEM:timedecay}.		
	Therefore, in proving local well-posedness via the fixed point argument, we require 
	\begin{equation}
		\label{localtime} \dl^{\frac{1}{2}-\eps_1-\eps_2} \| u \|_{X^{0, \frac{1}{2}+, \dl}}^2 \lesssim 1 
	\end{equation}
	
	\noi on the ball $\{ u: \| u \|_{X^{0, \frac{1}{2}+, \dl}} \leq 2C_0\|u_0\|_{L^2}\}$. Hence, we can choose $\dl \sim \|u_0\|_{L^2}^{-(4+\theta)}$ with $\theta = 0+$.
	
	Let $T>0$ and $\eps>0$ be given, and we continue the argument from Subsection \ref{SUBSEC:1GWP}. First, in view of the large deviation estimate \eqref{largedevi}, choose $K \sim \big(\log \frac{1}{\eps}\big)^\frac{1}{2}$ so that $\mathbb{P} ( \| u_0(\omega)\|_{H^s} \geq K ) \leq \frac{1}{2}\eps.$ In the following, we assume $\| u_0\|_{H^s} \leq K$.
	Now, fix $\dl \sim N^{(4+\theta)s} K^{-(4+\theta)}$ with $\theta = 0+$.
For fixed $\al \leq \frac{1}{2}$,  $ s= \al - \frac{1}{2}- < 0$ is also fixed.
Hence, we can write
\begin{equation}
	\label{DL} \dl \sim N^{4s - } K^{-4-}, 
\end{equation}
	
	\smallskip
	
	Before proceeding further, we present an important proposition whose proof is given in the remaining part of the paper. 
	\begin{proposition}
		\label{PROP:HNLS} Let $ s= \al - \frac{1}{2}-$ with $\al \in ( \frac{5}{12}, \frac{1}{2}]$. Given $T> 0$ and $K> 0$, there exists $N$ sufficiently large with $\dl \sim N^{4s-} K^{-4-}$ such that the following holds. Suppose that 
		\begin{equation}
			\label{ujbound} \|u^j(t)\|_{X^{0, \frac{1}{2}+}[(j-1)\dl, j\dl]} \leq C N^{-s}K 
		\end{equation}
		
		\noi such that $\dl^{\frac{1}{2}-}\|u^j(t)\|_{X^{0, \frac{1}{2}+}[(j-1)\dl, j\dl]}^2 \lesssim 1$ (see \eqref{localtime}) for $j = 1, \cdots, [\frac{T}{\dl}]$. Write the solution $v^j$ of the following difference equation: 
		\begin{equation}
			\label{HNLSj} 
			\begin{cases}
				i \dt v^j - \dx^2 v^j \pm (\mathcal{N} (u^j + v^j) - \mathcal{N}(u^j)) = 0 \\
				v^j|_{t= (j-1)\dl} = \psi_{j-1} := \sum_{|n|> N} \frac{g_n(\omega)e^{i (j - 1) \dl n^2}}{\sqrt{1+|n|^{2\al}}} e^{inx} 
			\end{cases}
		\end{equation}
		
		\noi as $v^j (t) = S(t-(j-1)\dl)\psi_{j-1} + w^j(t)$. Then, \eqref{HNLSj} is locally well-posed on the time interval $[(j-1) \dl, j\dl]$ except on a set of measure $e^{-\frac{1}{\dl^c}}$ for each $j = 1, \cdots, [\frac{T}{\dl}]$. Moreover, we have the following bound on the nonlinear terms: 
		\begin{equation}
			\label{wjbound} \sum_{j = 1}^{[T/\dl]} \|w^j(j\dl)\|_{L^2} \lesssim N^{-s}K. 
		\end{equation}
	\end{proposition}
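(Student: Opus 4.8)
The plan is to run, on each interval $[(j-1)\dl, j\dl]$, the fixed point scheme of Theorem~\ref{THM:LWP} for the difference equation \eqref{HNLSj}, now centered at the linear evolution $S(t-(j-1)\dl)\psi_{j-1}$ of the high-frequency Gaussian datum and in the presence of the external function $u^j$. Writing $v^j(t) = S(t-(j-1)\dl)\psi_{j-1} + w^j(t)$ and using that $\mathcal{N}$ conserves the $L^2$ norm, one shows that the Duhamel map
\[w\longmapsto \pm i \int_{(j-1)\dl}^t S(t-t')\big(\mathcal{N}(u^j + S(\cdot)\psi_{j-1}+w) - \mathcal{N}(u^j)\big)(t')\,dt'\]
is a contraction on a small ball of $Z^{0,\frac12}[(j-1)\dl, j\dl]$ for $\omega$ outside a set of measure $<e^{-1/\dl^c}$, with uniqueness and weak continuous dependence in the sense of the Remarks following Theorem~\ref{THM:GWP1}. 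By the linear estimate \eqref{duhamel}, the whole proposition --- both the local well-posedness and the bound \eqref{wjbound} --- reduces to the trilinear estimate
\[\big\|\mathcal{N}(u^j + S(\cdot)\psi_{j-1} + w) - \mathcal{N}(u^j)\big\|_{Z^{0,-\frac12,\dl}} \lesssim \dl^{\theta}\,N^{-s}K\cdot N^{-\kk+}\]
for some $\theta>0$ and some $\kk=\kk(\al,s)>0$, together with its difference version, uniformly for $w$ in the small ball.

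Expanding the cubic difference nonlinearity, every resulting term of $\mathcal{N}_1$ or $\mathcal{N}_2$ is trilinear in factors of three kinds: the large deterministic factor $u^j$, contributing $\|u^j\|_{Z^{0,\frac12,\dl}}\lesssim N^{-s}K$ by \eqref{ujbound}; the high-frequency random factor $S(\cdot)\psi_{j-1}$, Fourier-supported on $\{|n|>N\}$; and the small deterministic factor $w$, with $\|w\|_{Z^{0,\frac12,\dl}}\le 1$; at least one factor is of the second or third kind. I would then re-run the case analysis of Subsections~\ref{SUBSEC:LWP2}--\ref{SUBSEC:LWP4}, with the ``regularity'' parameter of the deterministic factors set to $0$ (since $u^j, w\in L^2$), making two changes. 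First, each type (II) factor that here equals $u^j$ costs $N^{-s}K$ rather than $1$; this is offset by the $\dl^{\frac12-}$ gain available (via Lemma~\ref{LEM:timedecay} and the $L^4$-estimate \eqref{L4}) per pair of such factors, together with the relation $\dl^{\frac12-}(N^{-s}K)^2\lesssim 1$ from the Proposition's hypothesis and the choice $\dl\sim N^{4s-}K^{-4-}$, so that a factor $\dl^{\frac12}(N^{-s}K)^2$ collapses. Second, each occurrence of $S(\cdot)\psi_{j-1}$, supported on $\{|n|>N\}$, produces a gain $N^{-\al+}$ through the $\jb{n}^{-\al}$ weight (via Lemma~\ref{LEM:prob1}, or via Lemma~\ref{LEM:prob2} and the hypercontractivity/divisor estimates of Subsection~\ref{SUBSEC:LWP4} as in Cases~(k), (l)); this supplies the spare power $N^{-\kk}$. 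Carried through Cases~(A)--(D) and (a)--(l), this yields the displayed bound provided $\al>\frac{5}{12}$.

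Granting the trilinear estimate, \eqref{wjbound} follows. By $Z^{0,\frac12}[(j-1)\dl, j\dl]\hookrightarrow C([(j-1)\dl, j\dl];L^2)$ and the Duhamel representation of $w^j$,
\[\|w^j(j\dl)\|_{L^2}\lesssim \dl^{0-}\big\|\mathcal{N}(u^j+v^j)-\mathcal{N}(u^j)\big\|_{Z^{0,-\frac12,\dl}}\lesssim \dl^{\theta-}\,N^{-s}K\cdot N^{-\kk+},\]
uniformly in $j$; summing over the $[T/\dl]\sim T\,N^{-4s+}K^{4+}$ values of $j$ and using $\dl\sim N^{4s-}K^{-4-}$ gives
\[\sum_{j=1}^{[T/\dl]}\|w^j(j\dl)\|_{L^2}\lesssim T\,\dl^{\theta-1-}\,N^{-s}K\cdot N^{-\kk+}\lesssim N^{-s}K,\]
valid once $N$ is taken large enough in terms of $T, K, \al, s$ (the freedom allowed by the statement), the essential point being that $\kk>4|s|(1-\theta)$ can be arranged in the admissible range. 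As for the exceptional sets: for each $j$ the bad set has measure $<e^{-1/\dl^c}$ with $c$ independent of $j$, and since the phases $e^{i(j-1)\dl n^2}$ appearing in $\psi_{j-1}$ are deterministic, rotation invariance of the $g_n$ lets Lemmata~\ref{LEM:prob1}, \ref{LEM:prob2} and the hypercontractivity bounds be applied verbatim; taking the union over $j=1,\dots,[T/\dl]$ the total bad set still has measure $<e^{-1/\dl^{c'}}$.

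The main obstacle is the bookkeeping in the trilinear estimate: one must show that the $\dl$-gains from time localization, together with the $N^{-\al+}$ gains from the high-frequency cutoff on $S(\cdot)\psi_{j-1}$, simultaneously absorb the growth $(N^{-s}K)^2$ produced by a pair of external factors $u^j$ and the factor $\sim N^{-4s}$ counting the iteration steps. The binding configuration is a low-modulation term with the two type-(II) factors equal to $u^j$ and the type-(I) factor $S(\cdot)\psi_{j-1}$ (e.g.\ Case~(a)): there only an arbitrarily small power of $\dl$ can be extracted, so after the cancellation $\dl^{\frac12}(N^{-s}K)^2\lesssim 1$ one is left with essentially $\dl^{-\frac12+}N^{-\al+}\sim N^{1-3\al+}K^{2+}$, which must be dominated by the per-step target $\sim N^{3s-}K^{-3-}$; balancing the exponents of $N$ gives $\frac52-6\al<0$, i.e.\ $\al>\frac{5}{12}$, and this is precisely what restricts the range compared with the $\al>\frac{1}{6}$ of Theorem~\ref{THM:LWP}.
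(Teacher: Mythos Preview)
Your plan is essentially the paper's approach, and your identification of the binding configuration (one random factor $S(\cdot)\psi_{j-1}$, two external $u^j$ factors, low modulation) together with the resulting threshold $\al>\tfrac{5}{12}$ is exactly right. However, there is one concrete gap.

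You take the contraction ball to have $\|w\|_{Z^{0,\frac12,\dl}}\le 1$, but this is too large for the iteration to close. Consider the all--type-(II) term $\mathcal N_1(u^j,u^j,w)$: after the $L^4$--Strichartz estimate and Lemma~\ref{LEM:timedecay} you only obtain
\[
\|\mathcal N_1(u^j,u^j,w)\|_{Z^{0,-\frac12,\dl}}\;\lesssim\;\dl^{\frac12-}\|u^j\|_{Z^{0,\frac12,\dl}}^2\,\|w\|_{Z^{0,\frac12,\dl}}\;\lesssim\;\dl^{0+}\|w\|_{Z^{0,\frac12,\dl}},
\]
using the hypothesis $\dl^{\frac12-}\|u^j\|^2\lesssim 1$. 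With $\|w\|\le 1$ this is merely $\dl^{0+}$, and summing over $[T/\dl]\sim N^{-4s+}K^{4+}$ steps gives $T\dl^{-1+}$, which diverges. In your own bookkeeping this term forces $\kappa\le |s|$, while your summing step requires $\kappa>4|s|(1-\theta)$; the two are incompatible for small $\theta$.

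The fix---and this is what the paper does---is to take the ball radius equal to the nonlinear target itself, $\sim N^{3s-\g}$, so that the ``third kind'' factor carries $\|w\|_{Z^{0,\frac12,\dl}}\lesssim N^{3s-\g}$. Then the all--type-(II) term gives $\dl^{0+}N^{3s-\g}\lesssim N^{3s-\g-}$, and the sum
\[
\tfrac{T}{\dl}\cdot N^{3s-\g}\;\sim\;T\,K^{4+}N^{-s-\g+}\;\lesssim\;N^{-s}K
\]
closes for $N=N(T,K)$ large. With this single adjustment (and the corresponding target $N^{3s-\g-}$ in place of your $\dl^\theta N^{-s}K\,N^{-\kappa+}$), the rest of your case analysis goes through as you describe.
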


\begin{remark} \rm
In Proposition \ref{PROP:HNLS}, we do not assume that $u^j$ is deterministic.
In our application, $u^j$ is indeed random -- not even independent from $\psi^{j-1}$ and $v^j$.
\end{remark}
	
	Now, we continue the proof of Proposition \ref{THM:GWP2}. Our choice of $\dl$ guarantees that \eqref{LNLS1} is well-posed on $[0, \dl]$ with the bound \eqref{u1bound}. Then, by Proposition \ref{PROP:HNLS}, \eqref{HNLS1} is well-posed on $[0, \dl]$ except on a set of measure $e^{-\frac{1}{\dl^c}}$ with the bound \eqref{w1bound}, which in turn shows that \eqref{LNLS2} is well-posed on $[\dl, 2\dl]$ with the bound \eqref{u2bound}. 
	
	Write the solution $v^2$ of \eqref{HNLS2} as $v^2 (t) = S(t-\dl)\psi_1 + w^2(t)$. It follows from \eqref{u2bound} and Proposition \ref{PROP:HNLS} that \eqref{HNLS2} is well-posed on the time interval $[\dl, 2\dl]$ except on a set of measure $e^{-\frac{1}{\dl^c}}$. Moreover, we have 
	\begin{equation}
		\label{w2bound} \sum_{j = 1}^2 \|w^j(j\dl)\|_{L^2} \lesssim N^{-s}K. 
	\end{equation}
	
	At time $t = 2\dl$, write $u (2\dl) = \phi_2 + \psi_2$, where $\phi_2 := u^2(2\dl) + w^2(2\dl)$ and $\psi_2 := S(\dl) \psi_1 = S(2\dl) \psi_0$. Then, \eqref{w2bound} guarantees that the solution $u^3$ to 
	\begin{equation}
		\label{LNLSj} 
		\begin{cases}
			i \dt u^j - \dx^2 u^j \pm \mathcal{N}(u^j) = 0 \\
			u^j|_{t= (j-1) \dl} = \phi_{j-1} 
		\end{cases}
	\end{equation}
	
	\noi with $j = 3$ satisfies 
	\begin{equation}
		\label{u3bound} \| u^3 \|_{X^{0, \frac{1}{2}+}[2\dl, 3\dl]} \leq \| \phi_0 \|_{L^2} + \sum_{j = 1}^2 \|w^j(j\dl)\|_{L^2} \lesssim N^{-s}K. 
	\end{equation}
	
	Clearly, we can iterate this argument to show that \eqref{NLS2} is well-posed on $[0, T]$, assuming \eqref{wjbound}. Lastly, note that the measure of the exceptional sets can be estimated by 
	\begin{align*}
		\Big[\frac{T}{\dl}\Big] e^{-\frac{1}{\dl^c}} \leq e^{\ln \frac{T}{\dl} -\frac{1}{\dl^c}} \leq e^{ -\frac{1}{2}\frac{1}{\dl^c}} < \frac{1}{2}\eps 
	\end{align*}
	
	\noi for sufficiently small $\dl >0$, i.e. for sufficiently large $N = N(T, \eps)$. This completes the proof of Proposition \ref{THM:GWP2}. 
\end{proof}

\subsection{Basic Setup} \label{SUBSEC:GWP2}

In the remaining part of the paper, we prove Proposition \ref{PROP:HNLS}. In the following, fix $T> 0 $ and $K > 0$, and let $ s= \al - \frac{1}{2}-$ and \eqref{DL}:
\begin{equation*}
	\dl \sim N^{4s-} K^{-4-}, 
\end{equation*}

\noi where $N = N(T, K)$ to be determined later.

First, consider the following difference equation: 
\begin{equation}
	\label{HNLSjj} 
	\begin{cases}
		i \dt v - \dx^2 v \pm (\mathcal{N} (u^0 + v) - \mathcal{N}(u^0)) = 0 \\
		v|_{t= 0} = \psi = \sum_{|n|> N} \frac{c_n g_n(\omega)}{\sqrt{1+|n|^{2\al}}} e^{inx} 
	\end{cases}
\end{equation}

\noi where $|c_n| = 1$ for all $n\in \mathbb{Z}$ and $u^0$ is a given function such that 
\begin{equation}
	\label{ujjbound} \|u^0(t)\|_{X^{0, \frac{1}{2}+, \dl}} \leq C N^{-s}K 
\end{equation}

\noi satisfying \eqref{localtime}. Let $w$ denote the nonlinear part of the solution $v$ of \eqref{HNLSjj}. i.e. it is given by 
\begin{equation}
	\label{wjj} w (t) := w(t; v, \psi, u^0) = \pm i \int_{0}^t S(t - t')\big(\mathcal{N} (u^0 + v) - \mathcal{N}(u^0)\big)(t') dt' 
\end{equation}

\noi for $ t \in [ 0, \dl]$. From the linear estimate \eqref{duhamel}, 
we have 
\begin{equation} \label{duhamel2}
\|w(\dl)\|_{L^2} \lesssim \|\eta_{_\dl}(t) w\|_{X^{0, \frac{1}{2}+\eps_1, \dl}}
\lesssim \| \mathcal{N} (\wt{u^0} + \wt{v}) - \mathcal{N}(\wt{u^0})\|_{X^{0, -\frac{1}{2}+\eps_1}},
\end{equation}

\noi
for some small $\eps_1>0$, where $\wt{u^0}$ and $ \wt{v}$ are extensions of $u^0$ and $v$, respectively.

{\it Suppose} that we have 
\begin{equation}
	\label{size1} \| \mathcal{N} (\wt{u^0} + \wt{v}) - \mathcal{N}(\wt{u^0})\|_{X^{0, -\frac{1}{2}+}} \lesssim N^{3s - \g} 
\end{equation}

\noi with some small $\g >0$ except on a set of measure $e^{-\frac{1}{\dl^c}}$
(for any extensions $\wt{u^0}$ and $ \wt{v}$  of $u^0$ and $v$.) 
Then, it follows that the mapping $\G$ defined by 
\begin{equation}
	\label{GNLS} \G v(t) := S(t) \psi +w(t; v, \psi, u^0) 
\end{equation}

\noi is a contraction on $S(t) \psi^\omega + B$ on the time interval $[0, \dl]$ except on a set of measure $e^{-\frac{1}{\dl^c}}$, where $B$ denotes the ball of radius $\sim N^{3s-\g }$ in $X^{0, \frac{1}{2}+}_{[0,  \dl]}$. Moreover, from \eqref{DL}, \eqref{duhamel2},  and \eqref{size1}, we have 
\begin{align}
	\label{wjjbound} \frac{T}{\dl} \|w(\dl)\|_{L^2} \lesssim T \dl^{-1} N^{3s-\g} \lesssim T K^{4+} N^{-s}N^{ -\g+ } \lesssim N^{-s}K 
\end{align}

\noi for sufficiently large $N = N(T, K)$. Note that \eqref{duhamel2} and \eqref{size1} imply only the boundedness of the map $\G$ from $S(t) \psi^\omega + B$ into itself. In establishing the contraction property, 
one needs to consider the difference $\G v_1 - \G v_2$ for $v_1, v_2 \in S(t) \psi^\omega + B$.
We omit details.

Finally, note that the bound \eqref{ujbound} on $u^j$ is uniform in $j$ in Proposition \ref{PROP:HNLS}. 
Hence, the above local well-posedness result can be applied to \eqref{HNLSj} on $[(j - 1)\dl, j\dl]$ for $j = 1, \cdots, [\frac{T}{\dl}]$, and moreover \eqref{wjbound} follows from \eqref{wjjbound}. Therefore, it remains to prove \eqref{size1} for $\al \in ( \frac{5}{12}, \frac{1}{2}]$ (and for large $N$.)

\medskip

Note that  \eqref{size1} follows, once we prove 
\begin{equation}
	\label{size2} \| \mathcal{N}_j (u_1, u_2, u_3) \|_{X^{0, -\frac{1}{2}+\eps_1}} \lesssim N^{3s - \g}, \ j = 1, 2, 
\end{equation}

\noi except on a set of measure $e^{-\frac{1}{\dl^c}}$, where 
$\N_j$ is as in \eqref{NN1} or \eqref{NN2}, and $u_j$ is either of type 
\begin{itemize}
	\item[(I)] linear part: random, less regular
	\[u_j (x, t) = S(t) \psi= \sum_{|n| > N } \frac{c_n g_n(\omega)}{\sqrt{1+|n|^{2\al}}} e^{i(nx + n^2t)} \text{ with } |c_n| = 1, \text{ or }\]
	\item[(II)] smoother:
	\begin{itemize}
	\item[(II.a)] ``high frequency'' nonlinear part: small
	\begin{equation}
			\label{IIa} 
		\ u_j = \wt{w}, \text{ where $\wt{w}$ is an extension of $w$} \text{ with } 
		\|w \|_{X^{0, \frac{1}{2}+\eps_1, \dl}} \lesssim N^{3s-\g},
	\end{equation}
	\item[(II.b)] ``low frequency'' input: large
	\begin{align} \label{IIb} 
			 u_j = \wt{u^0}, & \text{ where $\wt{u^0}$ is an extension of $u^0$}
			 \text{ with } \|u^0 \|_{X^{0, \frac{1}{2}+\eps_1, \dl}} \lesssim N^{-s}K  \\ 
			 & 			 \text{ satisfying } \eqref{localtime}:\dl^{\frac{1}{2}-\eps_1 -\eps_2} 
			 \| u^0 \|_{X^{0, \frac{1}{2}+\eps_1, \dl}}^2 \lesssim 1,  \notag 
	\end{align}
		\end{itemize}
\end{itemize}

\noi {\it except} for the case $u_j = u^0$ for all $j= 1, 2, 3$. We may insert the smooth cutoff function $\eta_{_\dl}$ supported on $[-2\dl, 2\dl]$ if necessary.

Note that $u^0$ has a much larger norm than $w$ since $s <0$. 
Thus, without loss of generality, 
we assume that $u_j = u^0$ if $u_j$ is of type (II), unless $u_j$ is of type (II) for {\it all} $j= 1, 2, 3$. In the latter case, we may assume that two of $u_j$'s are $u^0$ and the remaining $u_j$ is $w$., and it suffices to prove 
\begin{equation}
	\label{size3} \| \mathcal{N}_j (u^0, u^0, w) \|_{X^{0, -\frac{1}{2}+\eps_1}} \lesssim N^{3s - \g}, \ j = 1, 2, 
\end{equation}

\noi {\it assuming}  \eqref{localtime}. In the following subsections, we prove \eqref{size2} by separately estimating the contributions from $\mathcal{N}_1$ and $\mathcal{N}_2$. 
Indeed, except for Case (A) in Subsection \ref{SUBSEC:GWP4}
(namely with $u^0$, $u^0$, and $w$),
we can prove \eqref{size2} with $N^{3s-\g-}$ instead of $N^{3s - \g}$.

In the following,  we write estimates 
directly with 
$\|u_j\|_{X^{s, b, \dl}}$  for simplicity of presentation,
meaning that the same estimates hold with $\|\wt{u}_j\|_{X^{s, b}}$ 
for any extension $\wt{u}_j$ of $u_j$
(and thus we can take the infimum over $\wt{u}_j$.)
See Remark \ref{REM:local}.

\subsection{Estimate on $\mathcal{N}_2$} \label{SUBSEC:GWP3}

In this subsection, we prove the estimate \eqref{size2} for $\mathcal{N}_2 (u_1, u_2, u_3)$ defined in \eqref{NN2}. 
We need to estimate
\begin{align}
	\label{easy1b}
	\| \mathcal{N}_2 (u_1, u_2, u_3 & ) \|_{X^{0, -\frac{1}{2}+}} 
	= \bigg\| \frac{1}{\jb{\tau - n^2}^{\frac{1}{2}-}} \intt_{\tau = \tau_1 - \tau_2 + \tau_3} \ft{u}_1(n, \tau_1)\cj{\ft{u}_2(n, \tau_2)}\ft{u}_3(n, \tau_3) d\tau_1 d\tau_2 \bigg\|_{l^2_n L^2_\tau} \notag \\
	\intertext{By H\"older inequality with $p$ large ($\frac{1}{2} =\frac{1}{2+} + \frac{1}{p}$), } & \lesssim \sup_n \|\jb{\tau - n^2}^{-\frac{1}{2}+}\|_{L^{2+}_\tau} \Big\| \intt_{\tau = \tau_1 - \tau_2 + \tau_3} \ft{u}_1(n, \tau_1)\cj{\ft{u}_2(n, \tau_2)}\ft{u}_3(n, \tau_3) d\tau_1 d\tau_2 \Big\|_{l^2_{n} L^p_\tau} . 
\end{align}

\noi In the following, we omit details if the computation is basically the same as in 
Subsection \ref{SUBSEC:LWP3}.
Recall $\dl \sim N^{4s-}K^{-4-}$
from \eqref{DL}.
We assume that $N$ is sufficiently large in the following.

\medskip

\noi $\bullet$ {\bf Case (a):} $u_j$ of type (II), $j = 1, \dots, 3$.

In this case, we prove \eqref{size3}. By Young and H\"older inequalities in $\tau$, followed by H\"older in $n$, $l^2_n \subset l^6_n$, and Lemma \ref{LEM:timedecay}, we have 
\begin{align*}
	\eqref{easy1b} & \lesssim \prod_{j = 1}^3 \| \jb{\tau - n^2}^{\frac{1}{6}+} \ft{u}_j (n, \tau) \|_{l^6_n L^2_\tau} \leq \dl^{1-} \| u^0 \|_{X^{0, \frac{1}{2}+, \dl}}^2 \| w \|_{X^{0, \frac{1}{2}+, \dl}} \\
	& \lesssim \dl^{\frac{1}{2}-} N^{3s - \g } \lesssim N^{3s-\g-} 
\end{align*}

\noi for $s \leq 0 $.

\noi $\bullet$ {\bf Case (b):} $u_j$ of type (I), $j = 1, \dots, 3$.

By Lemma \ref{LEM:prob1} and  Young's inequality, we have
\begin{align*}
	\eqref{easy1b} & \lesssim \dl^{1-} \|\jb{n}^{-3\al} |g_n (\omega)|^3 \|_{l^2_{|n| > N}} \lesssim \dl^{1 -\frac{3}{2}\beta-} \|\jb{n}^{-3\al+ 3\eps} \|_{l^2_{|n| > N}} \\
	& \lesssim \dl^{1 -\frac{3}{2}\beta-} N^{-3\al+ \frac{1}{2}+ 3\eps} \lesssim N^{3s-2\al + } K^{-4-} \lesssim N^{3s -\g-} 
\end{align*}

\noi for $ \al > \frac{1}{2}\g > 0$
outside an exceptional set of measure $< e^{-\frac{1}{\dl^{c}}}$.

\noi $\bullet$ {\bf Case (c):} Exactly two $u_j$'s of type (I). Say $u_1(\I)$, $u_2(\I)$, and $u_3(\II)$. 

By Young's inequality and Lemmata \ref{LEM:timedecay} and \ref{LEM:prob1}, we have 
\begin{align*}
	\eqref{easy1b} & \lesssim \dl^{\frac{1}{2}-} \big(\sup_{|n|>N} \jb{n}^{-2\al} |g_n|^2\big) \big\| \ft{u^0} (n, \tau) \big\|_{l^2_n L^2_\tau} \lesssim \dl^{1 -\beta-} N^{-2\al+ 2\eps}\|u^0 \|_{X^{0, \frac{1}{2}+, \dl}}\\
	&\lesssim N^{3s-2\al + } K^{-3-} \lesssim N^{3s -\g-} 
\end{align*}

\noi for $ \al > \frac{1}{2}\g > 0$ outside an exceptional set of measure $< e^{-\frac{1}{\dl^c}}$.

\noi $\bullet$ {\bf Case (d):} Exactly one $u_j$ of type (I). Say $u_1(\I)$, $u_2(\II)$, and $u_3(\II)$.

By Young's inequality, followed by H\"older inequality in $n$ ($\frac{1}{2} = \frac{1}{4}+ \frac{1}{4}$) and in $\tau$ ($\frac{3}{4} = \frac{1}{2} + \frac{1}{4}$) and Lemmata \ref{LEM:timedecay} and \ref{LEM:prob1}, we have 
\begin{align*}
	\eqref{easy1b} & \lesssim \dl^{\frac{1}{2}-} \big(\sup_{|n| >N} \jb{n}^{-\al} |g_n|\big) \Big\| \big\| \ft{u^0} (n, \tau)\big\|_{L^\frac{4}{3}_\tau}^2 \Big\|_{l^2_{n}}\\
	& \lesssim \dl^{\frac{1}{2}-\frac{\beta}{2} - } N^{-\al+\eps} \sup_n \|\jb{\tau- n^2}^{-\frac{1}{4}-}\|^2_{L^4_\tau} \| \jb{\tau- n^2}^{\frac{1}{4}+} \ft{u^0} (n, \tau) \|^2_{l^4_n L^2_\tau} \\
	& \lesssim \dl^{1-\frac{\beta}{2} - } N^{-\al+\eps} \| \jb{\tau- n^2}^\frac{1}{2} \ft{v}_j (n, \tau) \|_{l^4_n L^2_\tau}^2 \lesssim N^{2s-\al+}K^{-2-} \lesssim N^{3s -\g-} 
\end{align*}

\noi for $\al > \frac{1}{4} + \frac{1}{2}\g >\frac{1}{4}$ outside an exceptional set of measure $< e^{-\frac{1}{\dl^c}}$. 

\subsection{Estimate on $\mathcal{N}_1$: High Modulation Cases} \label{SUBSEC:GWP4}

In the next two subsections, we prove the main part of the estimate \eqref{size2}: 
\begin{equation}
	\label{trilinear3b} \| \mathcal{N}_1(u_1, u_2, u_3)\|_{X^{0, -\frac{1}{2}+\eps_1, \dl}} \lesssim N^{3s -\g} 
\end{equation}

\noi for some small $ \g > 0$, 
where $\N_1$ is as in \eqref{NN1} and $u_j$ is of type (I) or (II). Once again, we omit details in the following when the computation basically follows from Subsection \ref{SUBSEC:LWP4}.

Using duality, we can estimate \eqref{trilinear3b} by 
\begin{equation}
	\label{duality1b} \iint u^1 u^2 u^3 \cdot v \, dx dt 
\end{equation}

\noi where $\| v\|_{X^{0, \frac{1}{2}-\eps_1, \dl}} \leq 1$ (with the complex conjugate on an appropriate $u^j$.)
We assume that $N$ is sufficiently large in the following.

\medskip

\noi $\bullet$ {\bf Case (A):} $u^1$ and $u^2$ are of type $(\II)$.

Suppose that $u^3$ is of type $(\II)$. In this case, we prove \eqref{size3} instead of \eqref{trilinear3b}. 
By Lemmata \ref{LEM:deterministic} (a) and \ref{LEM:timedecay} with \eqref{IIa} and \eqref{IIb}
(also see \eqref{localtime}), we have 
\begin{align*}
	\eqref{duality1b} 
	& \lesssim \dl^{\frac{1}{2}-\eps_1-\eps_2} \| u^0\|^2_{X^{0, \frac{1}{2}, \dl}} \| w\|_{X^{0, \frac{1}{2}, \dl}} 
	\| v\|_{X^{0, \frac{1}{2}-\eps_1, \dl}}\\
	& \lesssim   \| w\|_{X^{0, \frac{1}{2}, \dl}} 
		\lesssim N^{3s-\g}. 
\end{align*}

Next, suppose that $u^3$ is of type $(\I)$ i.e. $u^3 = S(t) u_0$.
In this case, we do not need to apply dyadic decomposition on $u^1$ and $u^2$.
Namely, for a fixed dyadic block $N^3$ for $u^3$ of type $(\I)$, 
with a slight abuse of notation, 
we  use $u^1$ and $u^2$ to denote the sums of $u^j$ over the dyadic blocks $N^j \geq N^3$, $j = 1, 2$.

By Lemma \ref{LEM:deterministic} (b) with $p$ large followed by Lemma \ref{LEM:prob2}, we have 
\begin{align*}
			\eqref{duality1b} %& \leq \| u^1\|_{L^{3+}} \|u^2\|_{L^{3+}} \| u^3\|_{L^p}\| v\|_{L^{3+}} 
			\lesssim (N^3)^{\frac{1}{2}-\al+} \| u^0\|^2_{X^{0, \frac{1}{4}+, \dl}} \| v\|_{X^{0, \frac{1}{4}+, \dl}} 
\end{align*}

\noi outside an exceptional set of measure $< e^{-\frac{1}{\dl^c}}$. If $\jb{\tau_j - n_j^2}^{\frac{1}{4}-} \gtrsim (N^3)^{\frac{1}{2}-\al+} N^{-3s+\g+\wt{\eps}}$ for $u_j$ of type $(\II)$, or if $\jb{\tau - n^2}^{\frac{1}{4}-} \gtrsim (N^3)^{\frac{1}{2}-\al+} N^{-3s+\g+\wt{\eps}}$, then it follows 
from Lemma \ref{LEM:timedecay}, \eqref{DL}, and \eqref{IIb} that 
\begin{align*}
	\eqref{duality1b} \lesssim \dl^{\frac{1}{2}-} N^{-2s} K^2 N^{3s-\g-\wt{\eps}} \lesssim N^{3s-\g-} 
\end{align*}

\noi for $N$ sufficiently large. Recall $N^3 > N$, $s = \al - \frac{1}{2}-$, and $\g = 0+$. 

Hence, it remains to estimate the contribution to \eqref{trilinear3b}
from the region satisfying
\begin{equation}
	\label{case0b} \jb{\tau - n^2} \ll (N^3)^{8-16\al+}, \text{ and } \jb{\tau_j - n_j^2} \ll (N^3)^{8-16\al+} \text{ if } u_j \text{ of type }(\II) 
\end{equation}
in the following.

\medskip

\noi $\bullet$ {\bf Case (B):} $u^1$ of type $(\II)$, and $u^2$ of type $(\I)$.

In this case, we do not need to apply dyadic decomposition on $u^1$.
Namely, for a fixed dyadic block $N^2$ for $u^2$ of type $(\I)$, 
we use $u^1$ to denote the sum of $u^1$ over the dyadic blocks $N^1 \geq N^2$.

\noi $\circ$ Subcase (B.1): $u^3$ is of type $(\II)$. 
By Lemma \ref{LEM:deterministic} (b) with $p$ large followed by Lemma \ref{LEM:prob2}, we have 
\begin{align*}
	\eqref{duality1b} %& \leq \| u^1\|_{L^{3+}} \|u^2\|_{L^p} \| u^3\|_{L^{3+}}\| v\|_{L^{3+}} 
	\lesssim (N^2)^{\frac{1}{2}-\al+}\|u^0\|^2_{X^{0, \frac{1}{4}+, \dl}} \|v\|_{X^{0, \frac{1}{4}+, \dl}} 
\end{align*}

\noi outside an exceptional set of size $<e^{-\frac{1}{\dl^c}}$. If $\jb{\tau_j - n_j^2}^{\frac{1}{4}-} \gtrsim (N^2)^{\frac{1}{2}-\al+}N^{-3s+\g+}$ for $u_j$ of type $(\II)$, or if $\jb{\tau - n^2}^{\frac{1}{4}-} \gtrsim (N^2)^{\frac{1}{2}-\al+}N^{-3s+\g+}$, then \eqref{trilinear3b} follows as in Case (A). 

Hence, it remains to estimate the contribution to \eqref{trilinear3b}
from the region satisfying
\begin{equation}
	\label{caseAb} \jb{\tau - n^2} \ll (N^2)^{8-16\al+}, \text{ and } \jb{\tau_j - n_j^2} \ll (N^2)^{8-16\al+} \text{ if } u_j \text{ of type }(\II). 
\end{equation}

\noi $\circ$ Subcase (B.2): $u^3$ is of type $(\I)$. 
By Lemma \ref{LEM:deterministic} (b) with $p$ large followed by Lemma \ref{LEM:prob2}, we have 
\begin{align*}
	\eqref{duality1b} %& \leq \| u^1\|_{L^{2+}} \|u^2\|_{L^p} \| u^3\|_{L^{p}}\| v\|_{L^{2+}} 
	\lesssim (N^2)^{1-2\al+}\|u^0\|_{X^{0, 0+, \dl}}\|v\|_{X^{0, 0+, \dl}} 
\end{align*}

\noi outside an exceptional set of measure $<e^{-\frac{1}{\dl^c}}$. If $(\s^1)^{\frac{1}{2}-} \gtrsim (N^2)^{1-2\al+}N^{-2s+\g+}$ or if $\jb{\tau - n^2}^{\frac{1}{2}-} \gtrsim (N^2)^{1-2\al+}N^{-2s+\g+}$, 
then \eqref{trilinear3b} follows from Lemma \ref{LEM:timedecay}, \eqref{DL}, and \eqref{IIb}. 

Hence, it remains to estimate the contribution to \eqref{trilinear3b}
from the region satisfying
\begin{equation}
	\label{caseB2} \jb{\tau - n^2} \ll (N^2)^{4-8\al+}, \text{ and } \jb{\tau_j - n_j^2} \ll (N^2)^{4-8\al+} \text{ if } u_j \text{ of type }(\II). 
\end{equation}

\medskip

\noi $\bullet$ {\bf Case (C):} $u^1$ of type $(\I)$, and $u^2$, $u^3$ of type $(\II)$.

Dyadically decompose all the spatial frequencies. Suppose $\jb{\tau - n^2} \gg \max(\s^2, \s^3)$. 
By Lemma \ref{LEM:deterministic} (c) with $p$ large, Lemmata \ref{LEM:prob2} and \ref{LEM:timedecay},
and \eqref{IIb}, we have 
\begin{align*}
	\eqref{duality1b}  %\leq \|u^1\|_{L^p} \|u^2\|_{L^{4}}\|u^3\|_{L^{4}}\|v\|_{L^{2+}} 
	 \lesssim (N^1)^{\frac{1}{2}-\al+}\|u^0\|_{X^{0, \frac{3}{8}, \dl}}^2\|v\|_{X^{0, 0+, \dl}} 
	\lesssim \dl^{\frac{1}{4}-} (N^1)^{\frac{1}{2}-\al+} N^{-2s}K^2 \|v\|_{X^{0, 0+}} 
\end{align*}

\noi outside an exceptional set of measure $<e^{-\frac{1}{\dl^c}}$. 
Hence, as before, \eqref{trilinear3b} follows as long as $\jb{\tau - n^2}^{\frac{1}{2}-} \gtrsim (N^1)^{ \frac{1}{2} - \al+}N^{-4s+\g+}$. Similar results hold if $\s^2 \gg \max(\s^3, \jb{\tau-n^2})$ or $\s^3 \gtrsim \max(\s^2, \jb{\tau-n^2})$. 

Hence, it remains to estimate the contribution to \eqref{trilinear3b}
from the region satisfying
\begin{equation}
	\label{caseBb} \jb{\tau - n^2} \ll (N^1)^{5 - 10\al+}, \text{ and } \jb{\tau_j - n_j^2} \ll (N^1)^{5 - 10\al+} \text{ if } u_j \text{ of type }(\II). 
\end{equation}

\medskip

\noi $\bullet$ {\bf Case (D):} $u^1$ of type $(\I)$, and either $u^2(\I)$, $u^3(\II)$ or $u^2(\II)$, $u^3(\I)$.

Suppose that $u^2$ is of type $(\I)$ and that $u^3$ is of type $(\II)$. Moreover, suppose $\jb{\tau - n^2} \gg \s^3$. 
By Lemma \ref{LEM:deterministic} (c) with $p$ large, 
 Lemmata \ref{LEM:prob2} and \ref{LEM:timedecay}, and \eqref{IIb}, we have 
\begin{align*}
	\eqref{duality1b} %& \leq \|u^1\|_{L^p} \|u^2\|_{L^{p}}\|u^3\|_{L^{2+}}\|v\|_{L^{2}} 
	 \lesssim (N^1)^{1-2\al+}\|u^0\|_{X^{0, 0+}}\|v\|_{X^{0, 0}} 
	\lesssim \dl^{\frac{1}{2}-} (N^1)^{1-2\al+}N^{-s}K\|v\|_{X^{0, 0}} 
\end{align*}

\noi outside an exceptional set of measure $<e^{-\frac{1}{\dl^c}}$. Hence, \eqref{trilinear3b} follows as long as $\jb{\tau - n^2}^{\frac{1}{2}-} \gtrsim (N^1)^{1 - 2\al+}N^{-2s +\g+}$. Similar results hold if $\s^3 \gtrsim \jb{\tau-n^2}$, (or $u^2$ is of type $(\II)$ and $u^3$ is of type $(\I)$.) 

Hence, it remains to estimate the contribution to \eqref{trilinear3b}
from the region satisfying
\begin{equation}
	\label{caseCb} \jb{\tau - n^2} \ll (N^1)^{4 - 8\al+}, \text{ and } \jb{\tau_j - n_j^2} \ll (N^1)^{4 - 8\al+} \text{ if } u_j \text{ of type }(\II). 
\end{equation}

\medskip

\noi {\bf Summary:} 
By repeating the computation in Subsection \ref{SUBSEC:LWP4},
we can reduce the estimate into the following two cases (with $\theta = 0+$):.

\medskip

\noi $\bullet$ $u^1$ is of type $(\II)$: By \eqref{v1} and \eqref{v2}, we can bound \eqref{trilinear3b} as follows: 
\begin{align}
	\label{u2b} \eqref{trilinear3b} \lesssim \dl^\theta 
	M(N, N^2, N^3) \bigg( \sum_n \Big| \sum_{\substack{ n = n_1 - n_2 + n_3 \\
	n_2 \ne n_1, n_3\\
	n^2 = n_1^2 - n_2^2 + n_3^2 + \mu}} a_1(n_1)\cj{a_2(n_2)}a_3(n_3) \Big|^2 \bigg)^\frac{1}{2}, 
\end{align}

\noi where $\sum_{n} |a^1(n)|^2 \leq 1$, $a^2(n) = \frac{g_{n}(\omega)}{1 + |n|^\al}$, $a^3(n) = \frac{g_{n}(\omega)}{1 + |n|^\al}$ or $\sum_{|n| \sim N^3} |a^3(n)|^2 \leq 1$, and 
\begin{tabbing}
	\hspace{1cm} \=Case (A): \hspace{1cm}\=$M(N, N^2, N^3) = (N^3)^{0+} N^{-2s} $ \=and \=$|\mu| \ll (N^3)^{8-16\al+}$ \\
	
	\>Subcase (B.1): \>$M(N, N^2, N^3) = (N^2)^{0+} N^{-2s}$ \>and \>$|\mu| \ll (N^2)^{8-16\al+}$ \\
	
	\>Subcase (B.2): \>$M(N, N^2, N^3) = (N^2)^{0+} N^{-s}$ \>and \>$|\mu| \ll (N^2)^{4-8\al+}$. 
\end{tabbing}
Note that we did not apply dyadic decomposition on $N^1$.

\medskip 

\noi $\bullet$ $u^1$ is of type $(\I)$: By \eqref{v1} and \eqref{v2}, we can bound \eqref{trilinear3b} as follows: 
\begin{align}
	\label{u1b} \eqref{trilinear3b} \lesssim \dl^\theta (N^1)^{0+} M(N) \bigg( \sum_{|n| \lesssim N^1} \Big| \sum_{\substack{ n = n_1 - n_2 + n_3 \\
	n_2 \ne n_1, n_3\\
	n^2 = n_1^2 - n_2^2 + n_3^2 + \mu}} a_1(n_1)\cj{a_2(n_2)}a_3(n_3) \Big|^2 \bigg)^\frac{1}{2}, 
\end{align}

\noi where $a^1(n) = \frac{g_{n}(\omega)}{1 + |n|^\al}$, $a^j(n) = \frac{g_{n}(\omega)}{1 + |n|^\al}$ or $\sum_{|n| \sim N^j} |a^j(n)|^2 \leq 1$ for $j = 2, 3$, and 
\begin{tabbing}
	\hspace{1cm} \=Case (C): \hspace{1cm}\=$M(N) = N^{-2s} $ \=and \=$|\mu| \ll (N^1)^{5-10\al+}$ \\
	
	\>Case (D): \>$M(N) = N^{-s}$ \>and \>$|\mu| \ll (N^1)^{4-8\al+}$\\
	
	\>All type (\I): \>$M(N) = 1$ \>and \>$|\mu| \lesssim (N^1)^2$.
\end{tabbing}
Note that all the spatial frequencies are dyadically decomposed.

\medskip

By symmetry between $u_1$ and $u_3$, we assume $|n_1| \sim N^1$ or $|n_2| \sim N^1$ in the following. Moreover, in Subcase (B.2) and Case (D), we may assume that $|n_1| \sim N^1$. If not, say, we have $|n_2| > 10 (|n_1| + |n_3|)$. Then, $|\mu| \sim |(n_2 - n_1) (n_2 - n_3)| \sim |n_2|^2 \sim (N^1)^2$ by \eqref{ALGEBRA}. In these two cases, we have $|\mu| \ll (N^j)^{4- 8\al+} \ll (N^1)^2$ as long as $\al > \frac{1}{4}$. i.e. we would have a contradiction.

Lastly, we list all the different cases as before.
We consider these cases in details in the next subsection.

\noi $\bullet$ $n_1 = N^1$: 
\begin{tabbing}
	\hspace{7mm} \=Case (a): \= $n_1 = N^1(\II)$, \= $n_2 = N^2(\I)$, \= $n_3 = N^3(\II)$ or \= $n_2 = N^3(\I)$, \= $n_3 = N^2(\II)$ \\
	
	\>Case (b): \>$n_1 = N^1(\II)$, \>$n_2 = N^3(\II)$, \> $n_3 = N^2(\I)$ or \>$n_2 = N^2(\II)$, \> $n_3 = N^3(\I)$ \\
	
	\>Case (c): \> $n_1 = N^1(\I)$, \>$n_2 = N^2(\II)$, \>$n_3 = N^3(\II)$ \\
	
	\>Case (d): \>$n_1 = N^1(\I)$, \>$n_2 = N^3(\II)$, \>$n_3 = N^2(\II)$\\
	
	\>Case (e): \>$n_1 = N^1(\II)$, \>$n_2 = N^2(\I)$, \>$n_3 = N^3(\I)$\\
	
	\>Case (f): \>$n_1 = N^1(\II)$, \>$n_2 = N^3(\I)$, \>$n_3 = N^2(\I)$\\
	
	\>Case (g): \>$n_1 = N^1(\I)$, \>$n_2 = N^2(\I)$, \>$n_3 = N^3(\II)$\\
	
	\>Case (h): \>$n_1 = N^1(\I)$, \>$n_2 = N^3(\I)$, \>$n_3 = N^2(\II)$\\
	
	\>Case (i): \>$n_1 = N^1(\I)$, \>$n_2 = N^2(\II)$, \>$n_3 = N^3(\I)$\\
	
	\>Case (j): \>$n_1 = N^1(\I)$, \>$n_2 = N^3(\II)$, \>$n_3 = N^2(\I)$\\
	
	\>Case (k): \>All type (\I)\\
	
	%Case (l): \>$n_1 = N^1(\I)$, \>$n_2 = N^3(\I)$, \>$n_3 = N^2(\I)$.\\
\end{tabbing}

\noi $\bullet$ $n_2 = N^1$: 
\begin{tabbing}
	\hspace{7mm} \=Case (a'): \= $n_2 = N^1(\II)$, \= $n_1 = N^2(\I)$, \= $n_3 = N^3(\II)$ or \= $n_1 = N^3(\I)$, \= $n_3 = N^2(\II)$ \\
	
	\>Case (b'): \>$n_2 = N^1(\II)$, \>$n_1 = N^3(\II)$, \> $n_3 = N^2(\I)$ or \>$n_1 = N^2(\II)$, \> $n_3 = N^3(\I)$ \\
	
	\>Case (c'): \> $n_2 = N^1(\I)$, \>$n_1 = N^2(\II)$, \>$n_3 = N^3(\II)$ \\
	
	\>Case (d'): \>$n_2 = N^1(\I)$, \>$n_1 = N^3(\II)$, \>$n_3 = N^2(\II)$\\
	
	\>Case (k'): \>All type (\I)
\end{tabbing}

\subsection{Estimate on $\mathcal{N}_1$: Low Modulation Cases} \label{SUBSEC:GWP5}

As before, we use $|n|^\al $ for $1+ |n|^\al$ and drop a complex conjugate on $u_2$ when it plays no significant role. 
Let $A_n$ and $B_n$ be as in Subsection \ref{SUBSEC:LWP4}.
Recall \[\mu = 2 (n_2 - n_1) (n_2 - n_3) = 2 (n - n_1) (n - n_3) \] 

\noi
from \eqref{ALGEBRA7}, $s = \al - \frac{1}{2}-$, and $N_j > N$ if $u_j$ is of type (\I).

\medskip

\noi $\bullet$ {\bf Cases (k), (k'):} $u_1, u_2, u_3$ of type $(\I)$. \ \ 
In this case, we have 
\begin{align}
	\label{casekb} \eqref{u1b} \lesssim \dl^{\theta} (N^1)^{0+} \bigg( \sum_{|n| \lesssim N^1} \Big|\sum_{B_n} \frac{g_{n_1}}{|n_1|^\al} \frac{\cj{g_{n_2}}}{|n_2|^\al} \frac{g_{n_3}}{|n_3|^\al} \Big|^2 \bigg)^\frac{1}{2}. 
\end{align}

\noi
Note that we have $N_1, N_2, N_3 > N$.
First, we consider the contribution from $n_1 \ne n_3$. 
As in Subsection \ref{SUBSEC:LWP4}, 
by \eqref{chaosestimate} and Lemma \ref{LEM:count1}, we have 
\begin{align*}
	\text{RHS of } \eqref{casekb} & \lesssim \dl^{\theta-\frac{3}{2}\beta} (N^1)^{\frac{3}{2}\eps+} \bigg( \sum_{|n| \lesssim N^1} \sum_{C_n} \frac{1}{|n_1|^{2\al}|n_2|^{2\al}|n_3|^{2\al}} \bigg)^\frac{1}{2}\\
	& \lesssim \dl^{\theta-\frac{3}{2}\beta} (N^1)^{-\al +\frac{3}{2}\eps+} (N^2)^{-\al} (N^3)^{-\al+\frac{1}{2}} \lesssim N^{ - 3\al + \frac{1}{2} +} \leq N^{3s -\g-}\prod_{j = 1}^3 N_j^{0-} 
\end{align*}

\noi for $\al > \frac{1}{3} + \frac{1}{6}\g> \frac{1}{3}$ and sufficiently large $N$ outside an exceptional set of measure
$< (N^1)^{0-} e^{-\frac{1}{\dl^{c}}}$ as in \eqref{EXCEPT1}.

The contribution from $n_1 = n_3$  follows 
as in \eqref{ZK}.
By Lemmata \ref{LEM:count1} and \ref{LEM:prob1}, we have 
\begin{align*}
	\text{RHS of } \eqref{casekb} & \lesssim \dl^{\theta-\frac{3}{2}\beta} (N^1)^{0+}N_1^{-2\al + 2\eps} N_2^{-\al + \eps}(N^3)^\frac{1}{2} \leq N^{3s-\g-}\prod_{j = 1}^3 N_j^{0-} 
\end{align*}

\noi for $\al > \frac{1}{3} + \frac{1}{6}\g> \frac{1}{3}$ and sufficiently large $N$ outside an exceptional set of measure $< e^{-\frac{1}{\dl^c}}$. \medskip

\noi $\bullet$ {\bf Case (a) :} (Cases (b), (a'), and (b') can be treated in a similar way by replacing $n_2$ with $n_3$, $n_2$ with $n_1$, and $(n_1, n_2, n_3)$ with $(n_2, n_3, n_1),$ respectively.)

In this case, we have $\mu = 2 (n_2 - n_1) (n_2 - n_3) = o( (N_2)^{8-16\al+})$.
Thus, by Lemma \ref{LEM:prob1}, Cauchy-Schwarz inequality, and \eqref{A1} as before, we have 
\begin{align*}
	\eqref{u2b} & 
	\lesssim \dl^{\theta-\frac{\beta}{2}} (N_2)^{-\al + \frac{1}{2}\eps +} N^{-2s} 
	\bigg( \sum_n  \Big(\sum_{A_n}|a_1(n_1)|^2|a_3(n_3)|^2\Big)
	 \Big(\sum_{A_n}1\Big)\bigg)^\frac{1}{2} \notag \\
		& \lesssim \dl^{\theta-\frac{\beta}{2}} N_2^{-\al+\eps+} N^{-2s}
	\Big( \sum_n  \sum_{A_n}|a_1(n_1)|^2|a_3(n_3)|^2
	 \Big)^\frac{1}{2} \notag \\
		& \lesssim \dl^{\theta-\frac{\beta}{2}} N_2^{-\al+\eps+} N^{-2s}
		\leq N^{3s-\g-} N_2^{0-} N_3^{0-} 
\end{align*}

\noi 
for $\al > \frac{5}{12}+\frac{1}{6}\g > \frac{5}{12}$
and sufficiently large $N$
outside an exceptional set of measure $< e^{-\frac{1}{\dl^c}}$.

\medskip

\noi $\bullet$ {\bf Case (c):} (Case (d) can be treated in a similar way by replacing $n_2$ and $n_3$.)

 By Lemma \ref{LEM:prob1} and H\"older inequality on $n_3$ in the inner sum, 
\begin{align*}
	\eqref{u1b} \lesssim \dl^{\theta-\frac{\beta}{2}} N_1^{ -\al +\eps+} N^{-2s} 
	 \Big( \sum_{|n| \lesssim N^1} \sum_{B_n} |a_2(n_2)|^2 \Big)^\frac{1}{2} 
\end{align*}

\noi outside an exceptional set of measure $< e^{-\frac{1}{\dl^c}}$. 
For fixed $n_2$, it follows from (the proof of) Lemma \ref{LEM:count1} that there are at most $N_1^{0+}$ terms in the sum. 
Hence, we have 
\begin{align*}
	\eqref{u1b} \lesssim \dl^{\theta-\frac{\beta}{2}} N_1^{ -\al +\eps+} 
	N^{-2s}  \leq N^{3s-\g -} \prod_{j = 1}^3 N_j^{0-}
\end{align*}

\noi 
for $\al > \frac{5}{12}+\frac{1}{6}\g > \frac{5}{12}$
 and sufficiently large $N$.

\medskip

\noi $\bullet$ {\bf Case (e) :} (Case (f) is basically the same.)

In this case, we have $ |\mu| = |2 (n_2 - n_1) (n_2 - n_3)| \ll  N_2^{4-8\al+} $. 
This implies that $|n|, |n_1|, |n_3| \lesssim N_2^q$
for some $q> 0$ since $n_2 \ne n_1, n_3$.
Then, by Lemma \ref{LEM:prob1}, Cauchy-Schwarz inequality, and \eqref{A1} as before, 
we have
\begin{align*}
	\eqref{u2b} 
& \lesssim \dl^{\theta-\beta} 
	N_2^{-\al+\eps+} N_3^{-\al} N^{-s}\bigg( \sum_{|n| \lesssim N_2^q} 
	\Big(\sum_{A_n} |a_1(n_1)|^2\Big) \Big(\sum_{A_n} 1\Big) \bigg)^\frac{1}{2} \\
& \lesssim \dl^{\theta-\beta} 
	N_2^{-\al+\frac{3}{2}\eps+} N_3^{-\al} N^{-s}\Big( \sum_{|n| \lesssim N_2^q} \sum_{A_n} |a_1(n_1)|^2 \Big)^\frac{1}{2} \\
& \lesssim \dl^{\theta-\beta} N_2^{-\al+2\eps+} N_3^{ -\al} N^{-s}
\leq N^{3s-\g-}N_2^{0-} N_3^{0-} 
\end{align*}

\noi for $ \al > \frac{1}{3} + \frac{1}{6}\g>\frac{1}{3}$ and sufficiently large $N$ outside an exceptional set of measure
$<e^{-\frac{1}{\dl^c}}$.

\medskip

\noi $\bullet$ {\bf Case (g) :} (Cases (h), (i), (j) are basically the same.)

By Lemma \ref{LEM:prob1} and Cauchy-Schwarz inequality as before,  we have
\begin{align*}
	\eqref{u1b} 
& \lesssim \dl^{\theta-\beta} 
	N_1^{-\al+\eps+} N_2^{-\al+ \frac{1}{2}\eps} N^{-s}\Big( \sum_{|n| \lesssim N_1} 
	\sum_{A_n} |a_3(n_3)|^2\Big)^\frac{1}{2} \\
& \lesssim \dl^{\theta-\beta} 
	N_1^{-\al+\frac{3}{2}\eps+} N_2^{-\al+ \frac{1}{2}\eps} N^{-s}
 \leq N^{3s-\g-}\prod_{j = 1}^3 N_j^{0-} 
\end{align*}

\noi for $ \al > \frac{1}{3} + \frac{1}{6}\g>\frac{1}{3}$ and sufficiently large $N$ outside an exceptional set of measure
$<e^{-\frac{1}{\dl^c}}$.

\begin{remark} \rm
It is worthwhile to note that 
the worst case occurs:
\begin{itemize}
\item for all type (I) in the local theory.
\item for one (I) and two (II.b) in the global theory.
\end{itemize}

\noi
These cases yield the conditions on the values of $\al$ in Theorems \ref{THM:LWP} and \ref{THM:GWP1}.

\end{remark}

\medskip

\noindent {\bf Acknowledgments:} The authors would like to thank Prof.~Kenji Nakanishi for the conversation at Institut Henri Poincar\'e. 
They are also grateful to Prof.~Nicolas Burq for remarks on a preliminary draft.
Lastly, they would like to express their gratitude to the anonymous referees for thoughtful
comments that have improved this paper. 

%\begin{thebibliography}{99}

%	\bibliography{/Users/colliand/Documents/Master.bib} (USED BY BIBTEX)
	\bibliographystyle{plain}

\end{document}